\documentclass[psamsfonts,fceqn,leqno]{amsart}
\usepackage{mathrsfs,latexsym,amsfonts,amssymb,curves,epic}
\setcounter{page}{1} \setlength{\textwidth}{14.6cm}
\setlength{\textheight}{22.5cm} \setlength{\evensidemargin}{0.8cm}
\setlength{\oddsidemargin}{0.8cm} \setlength{\topmargin}{0.8cm}
\usepackage{color}

\newtheorem{theorem}{Theorem}[section]
\newtheorem{corollary}[theorem]{Corollary}
\newtheorem{proposition}[theorem]{Proposition}
\newtheorem{lemma}[theorem]{Lemma}
\newtheorem{question}[theorem]{Question}
\newtheorem{problem}[theorem]{Problem}
\theoremstyle{definition}
\newtheorem{definition}[theorem]{Definition}

\newtheorem{example}[theorem]{Example}

\begin{document}
\title[Transversal, $T_{1}$-independent, and $T_{1}$-complementary  paratopological group topologies]
{Transversal, $T_{1}$-independent, and $T_{1}$-complementary  paratopological group topologies}

  \author{Fucai Lin}
  \address{(Fucai Lin): School of mathematics and statistics,
  Minnan Normal University, Zhangzhou 363000, P. R. China}
  \email{linfucai2008@aliyun.com; linfucai@mnnu.edu.cn}

  \thanks{The author is supported by the NSFC (No. 11571158), the Natural Science Foundation of Fujian Province (No. 2017J01405) of China, the Program for New Century Excellent Talents in Fujian Province University, the Project for Education Reform of Fujian Education Department (No. FBJG20170182), the Institute of Meteorological Big Data-Digital Fujian and Fujian Key Laboratory of Data Science and Statistics.}

  \keywords{paratopological group; transversal; $T_{1}$-independent; and $T_{1}$-complementary; $PT$-sequence; $PT$-filter; submaximal paratopological group}
  \subjclass[2000]{primary 22A05, 54H11; secondary 54A25, 54A35, 54G20}

  \begin{abstract}
We discuss the class of paratopological groups which admits a transversal, $T_{1}$-independent and $T_{1}$-complementary paratopological group topology. We show that the Sorgenfrey line does not admit a $T_{1}$-complementary Hausdorff paratopological group topology, which gives a negative answer to \cite[Problem 10]{AT2017}. We give a very useful criterion for transversality in term of submaximal paratopological group topology, and prove that if a non-discrete paratopological group topology $G$ contains a central subgroup which admits a transversal paratopological group topology, then so does $G$.  We introduce the concept of $PT$-sequence and give a characterization of an Abelian paratopological group being determined by a $PT$-sequence. As the applications, we prove that the Abelian paratopological group, which is endowed with the strongest paratopological group topology being determined by a $T$-sequence, does not admit a $T_{1}$-complementary Hausdorff paratopological group topology on $G$. Finally, we study the class of countable paratopological groups which is determined by a $PT$-filter, and obtain a sufficient condition for a countable paratopological group $G$ being determined by a $PT$-sequence which admits a transversal paratopological group topology on $G$ being determined by a $PT$-sequence.
  \end{abstract}

  \maketitle
\section{Introduction}
In 1936, Birkhoff has begun to study the lattice of all topologies on a given set in \cite{Bi1936}, which was published in Fundamenta Mathematicae. Birkhoff was probably the first in recognizing the importance of the structure
of the set $\mathcal{L}(X)$ all topologies on a given set $X$ being ordered by inclusion. Two distinct topologies on a set $X$ are said to be complementary if their intersection is the indiscrete topology and their
supremum is the discrete topology. The complementarity of the lattice $\mathcal{L}(X)$ was
established independently by Steiner \cite{St1966} and van Rooij \cite{RA1968}. Moreover, Bagley \cite{Ba1955} and Steiner \cite{St1966} studied the lattice $\mathcal{L}_{1}(X)$ of all $T_{1}$-topologies
on a set $X$ and the $T_{1}$-complements, where two $T_{1}$ topologies $\tau_{1}$ and $\tau_{2}$ on a set $X$ are called $T_{1}$-complementary if the intersection $\tau_{1}\cap \tau_{2}$ is the
cofinite topology and their supremum is the discrete topology on $X$. In \cite{DTY2005}, D. Dikranjan, M. Tkachenko, I. Yaschenko denote $\mathcal{L}(G)$ by all Hausdorff group topologies on a group $G$: they proved that no topology $\tau\in\mathcal{L}(G)$ admits a $T_{1}$-complement. Therefore, they split the notion of $T_{1}$-complementarity
into two natural components: transversality and $T_{1}$-independence.
Indeed, the investigation of $T_{1}$-independent and transversal group topologies on Abelian groups was initiated in \cite{DTY2005}, \cite{MY2002} and \cite{ZP2001}.

\begin{definition}\cite{DTY2005}
Two non-discrete topologies $\tau_{1}$ and $\tau_{2}$ on a set $X$ are called {\it transversal} if
their union generates the discrete topology on $X$.
\end{definition}

\begin{definition}\cite{DTY2005}
Two $T_{1}$ topologies $\tau_{1}$ and $\tau_{2}$ on a set $X$ are called {\it $T_{1}$-independent} if their
intersection $\tau_{1}\cap \tau_{2}$ is the cofinite topology on $X$.
\end{definition}

Recall that a {\it paratopological group} is a group with topology in which multiplication is jointly continuous. If $G$ is a paratopological group  and the inverse operation of $G$ is continuous, then $G$ is called a {\it topological group}. It is well known that there exists a paratopological group which is not a
topological group; Sorgenfrey line (\cite[Example 1.2.2]{E1989}) is such an example. In the past 30 years, the theory of paratopological groups has become a hot research topic and obtained a lot of interesting results, see \cite{MT2014}. In this paper, we discuss the the transversality, $T_{1}$-independence and $T_{1}$-complementarity in the class of paratopological groups.

There is a big difference between $T_{1}$-independence and transversality of paratopological group topologies. Indeed, the union $\tau_{1}\cup \tau_{2}$ of two paratopological group topologies on $G$ always generates a paratopological group topology, but the intersection $\tau_{1}\cap \tau_{2}$ can fail to be a paratopological group topology. Here we study the class of paratopological groups that admit transversal, $T_{1}$-independent and $T_{1}$-complementary paratopological group topologies respectively. The paper is organized as follows.

In Section 3 we discuss the existence of a pair of $T_{1}$-complementary Hausdorff paratopological group topologies on an infinite group $G$. We introduce the concept of $SD$-paratopological group, and then prove that, for any an infinite Hausdorff $SD$-paratopological group, there does not exist a pair of $T_{1}$-complementary Hausdorff paratopological group topologies (Theorem~\ref{t0}); in particular, since the Sorgenfrey line $\mathbb{S}$ is a $SD$-paratopological group, there does not admit a $T_{1}$-complementary Hausdorff paratopological group topology, which gives a negative answer to Problem~\ref{q1}. Further, we discuss the existence of a pair of $T_{1}$-complementary Hausdorff paratopological group topologies for any a countable infinite group, and prove that for a countable infinite group $G$, it does not admit a pair of $T_{1}$-complementary Hausdorff paratopological group topologies if and only if it does not admit $T_{1}$-independent Hausdorff paratopological group topologies (Theorem~\ref{t2}).

In Section 4 we introduce the concept of submaximal paratopological group, and develop the technique of submaximal paratopological group topology. We prove that a submaximal paratopological group on a group is a topological group (Theorem~\ref{tttt}), and then we give a very useful criterion for the transversality in term of submaximal paratopological group topology (Theorem~\ref{tttt1}). As an application, we prove in Theorem~\ref{t03} that if a non-discrete paratopological group topology $G$ contains a central subgroup which admits a transversal paratopological group topology, then so does $G$. Moreover, we discuss when a submaximal paratopological group is precompact. We obtain that a submaximal Abelian paratopological group is precompact if and only if each non-precompact paratopological group topology admits a transversal paratopological group topology (Theorem~\ref{tt00}).

In Section 5 we study the class of Abelian paratopological groups which is determined by a $PT$-sequence. We give a characterization of an Abelian paratopological group being determined by a $PT$-sequence (Theorem~\ref{th003}), and show that the family $\sum(\varphi^{\ast})$ is a base of neighborhoods of $0$ for the paratopological group $P(G|\varphi)$ for each $PT$-filter $\varphi$ on $G$ (Theorem~\ref{t00}). As the applications, we first prove that the Abelian paratopological group, which is endowed with the strongest paratopological group topology being determined by a $PT$-sequence, is a sequential non-Fr\'{e}chet-Urysohn space (Theorem~\ref{t001} and Corollary~\ref{c001}); then we show that the Abelian paratopological group $G$, which is endowed with the strongest paratopological group topology being determined by a $T$-sequence, does not admit a $T_{1}$-complementary Hausdorff paratopological group topology on $G$ (Theorem~\ref{t007}).

In Section 6 we main discuss the class of countable paratopological groups which is determined by a $PT$-filter (in particular, $PT$-sequence). We obtain a characterization of a countable paratopological group being determined by a $PT$-filter (Theorem~\ref{t009}). We also prove that a countable group $G$ admits a non-discrete $T_{1}$-paratopological group topology if and only if there exits a non-trivial $PT$-sequence of elements of $G$ (Theorem~\ref{t005}). Finally, we show that, for a countable paratopological group $G$ being determined by a $PT$-sequence $\{a_{n}\}_{n\in\omega}$, if $P(G|\{a_{n}\})$ is Hausdorff, then there exists a nontrivial $PT$-sequence $\{b_{n}\}_{n\in\omega}$ in $G$ such that $P(G|\{a_{n}\})$ and $P(G|\{b_{n}\})$ are transversal (Theorem~\ref{t009}).

  \maketitle
\section{Notation and terminology}
We denote by $\omega$ and $\mathbb{Z}$ the sets of naturals and integers, respectively, by $\mathbb{R}$ the reals, by $\mathbb{Q}$ the rationals, by $\mathbb{T}$ the unit circle group in the complex plain
$\mathbb{C}$. For undefined
  notation and terminology, the reader may refer to \cite{AT2008} and
  \cite{E1989}.

Let $(G, \tau)$ be a (para)topological group with identity $e$ and $H\subset G$, and denote by $\tau(e)$ the set of all neighborhoods of $e$ in $G$. The subgroup $c_{G}(H)$ denotes the centralizer of $H$ in $G$, i.e., the maximal subgroup of $G$ all elements of which commute with the elements of $H$. The (para)topological group $G$ is called {\it precompact} (resp., {\it $\omega$-narrow}) if $G$ can be covered by finitely (resp., countably) many translates of each neighborhood of $e$.
A paratopological group $G$ is {\it saturated} if for any neighborhood $U$ of $e$ the set $U^{-1}$ has nonempty
interior in $G$.
Call an infinite group $G$
without non-discrete Hausdorff group topologies a {\it Markov group}.

Recall that the {\it Sorgenfrey line} is a topology on $\mathbb{R}$ whose base consists of half-open intervals $[a, b)$, with $a<b$.

Let $X$ be a space. A subset $P$ of $X$ is called a
  {\it sequential neighborhood} of $x \in X$, if each
  sequence converging to $x$ is eventually in $P$. A subset $U$ of
  $X$ is called {\it sequentially open} if $U$ is a sequential neighborhood of
  each of its points. A subset $F$ of
  $X$ is called {\it sequentially closed} if $X\setminus F$ is sequentially open. The space $X$ is called a {\it sequential space} if each
  sequentially open subset of $X$ is open. The space $X$ is said to be {\it Fr\'{e}chet-Urysohn} if, for
each $x\in \overline{A}\subset X$, there exists a sequence
$\{x_{n}\}$ in $A$ such that $\{x_{n}\}$ converges to $x$.

A space $X$ is called an {\it $S_{2}$-space} ({\it Arens' space})  if
$$X=\{\infty\}\cup \{x_{n}: n\in \mathbb{N}\}\cup\{x_{n, m}: m, n\in
\omega\}$$ and the topology is defined as follows: Each
$x_{n, m}$ is isolated; a basic neighborhood of $x_{n}$ is
$\{x_{n}\}\cup\{x_{n, m}: m>k\}$, where $k\in\omega$;
a basic neighborhood of $\infty$ is $$\{\infty\}\cup (\bigcup\{V_{n}:
n>k\})\ \mbox{for some}\ k\in \omega,$$ where $V_{n}$ is a
neighborhood of $x_{n}$ for each $n\in\omega$.

For each $n\in\omega$, let $S_{n}$ be a sequence converging to
  $x_{n}\not\in S_{n}$. Let $T=\bigoplus_{n\in\omega}(S_{n}\cup\{x_{n}\})$ be the topological sum of $\{S_{n}
  \cup \{x_{n}\}: n\in\omega\}$. Then
  $S_{\omega}=\{x\}  \cup \bigcup_{n\in\omega}S_{n}$
  is the quotient space obtained from $T$ by
  identifying all the points $x_{n}\in T$ to the point $x$. The space $S_{\omega}$ is called a {\it sequential fan}.

  \maketitle
\section{The $T_{1}$-complementarity of paratopological groups}
In \cite{AT2017},  A. B{\l}aszczyk and M. Tkachenko gave a survey on the recent advance in the theory of $T_{1}$-complementarity on a sets and groups, and posed the following two pen problems.

\begin{problem}\cite[Problem 9]{AT2017}\label{q0}
Does there exist a pair of $T_{1}$-complementary Hausdorff paratopological group topologies on an infinite group $G$?
\end{problem}

\begin{problem}\cite[Problem 10]{AT2017}\label{q1}
Does the Sorgenfrey line $\mathbb{S}$ adimit a $T_{1}$-complementary Hausdorff paratopological group topology?
\end{problem}

In this section, we main discuss the $T_{1}$-complementarity of paratopological groups, and prove that, for any infinite Hausdorff saturated paratopological group $(G, \tau)$, there does not exist a Hausdorff paratopological group topology $\sigma$ on $G$ such that $\sigma$ and $\tau$ are $T_{1}$-complementary, which gives a partial answer to Problem~\ref{q0}; in particular, the Sorgenfrey line $\mathbb{S}$ does not adimit a $T_{1}$-complementary Hausdorff paratopological group topology, which gives a negative answer to Problem~\ref{q1}. We first define a new concept.

A paratopological group $(G, \tau)$ is called a {\it $SD$-paratopological group} if there exists a saturated paratopological group topology $\gamma$ weaker than $\tau$ such that each dense subset in $(G, \gamma)$ is dense in $(G, \tau)$. Clearly, each saturated paratopological group is a $SD$-paratopological group.

Now we can prove one of our main results in this section.

\begin{theorem}\label{t0}
For any an infinite Hausdorff $SD$-paratopological group $(G, \tau)$, there does not exist a Hausdorff paratopological group topology $\sigma$ on $G$ such that $\sigma$ and $\tau$ are $T_{1}$-complementary.
\end{theorem}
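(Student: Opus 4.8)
The plan is to argue by contradiction. Suppose $\sigma$ is a Hausdorff paratopological group topology on $G$ that is $T_{1}$-complementary to $\tau$, so that $\tau\cap\sigma$ is the cofinite topology while $\tau\vee\sigma$ is discrete. Since $\{e\}$ is $(\tau\vee\sigma)$-open, I can fix $U\in\tau(e)$ and $V\in\sigma(e)$ with $U\cap V=\{e\}$. Both $\tau$ and $\sigma$ are non-discrete (if, say, $\tau$ were discrete then $\tau\cap\sigma=\sigma$ would be simultaneously Hausdorff and cofinite, which is impossible on an infinite set), so by homogeneity and the $T_{1}$ property every nonempty member of either topology is infinite; in particular $U$ and $V$ are infinite, and a quick check shows neither is cofinite (if $U$ were cofinite then $V\setminus\{e\}\subseteq G\setminus U$ would be finite, contradicting that $V$ is infinite). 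I would then reduce the whole theorem to the following single assertion: there is an infinite, co-infinite subset $A\subseteq G$ that is simultaneously $\tau$-open and $\sigma$-open. Such an $A$ lies in $\tau\cap\sigma$ but is not cofinite, contradicting $T_{1}$-complementarity. Dually, it suffices to produce an infinite proper set that is closed in both $\tau$ and $\sigma$, since in the cofinite topology the only closed sets are the finite sets and $G$ itself.

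The difficulty special to paratopological groups is the absence of continuous inversion, which blocks the passage from $U\cap V=\{e\}$ to a genuinely separating pair of symmetric neighborhoods, and this is exactly where I would use the hypotheses. First I would reformulate the $SD$ condition: the statement that every $(G,\gamma)$-dense set is $(G,\tau)$-dense is equivalent to the statement that every nonempty $\tau$-open set contains a nonempty $\gamma$-open set, i.e. $\gamma$ is a $\pi$-base-dense saturated coarsening of $\tau$. This lets me transfer a density or closure statement proved for the saturated topology $\gamma$ up to $\tau$. Second, saturation of $\gamma$, namely that $W^{-1}$ has nonempty $\gamma$-interior for every $W\in\gamma(e)$, is the substitute for continuous inversion: it should allow me to shrink $U$ to $U_{1}\in\tau(e)$ for which an inversion-type estimate such as $U_{1}U_{1}^{-1}\subseteq U$ holds (after the $\gamma$-density correction), mirroring the step $U_{1}U_{1}^{-1}\subseteq U$ in the Dikranjan--Tkachenko--Yaschenko proof that no Hausdorff group topology admits a $T_{1}$-complement. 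Refining $V$ to $V_{1}\in\sigma(e)$ in the same way, I would arrange $U_{1}\cap V_{1}=\{e\}$ together with enough symmetry to make translation arguments available.

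With the separating pair $U_{1},V_{1}$ in hand, I would globalize the local separation at $e$ into the forbidden common open (or closed) set, following a dichotomy in the spirit of the topological-group case. If $\tau$ is precompact, finitely many left translates of $U_{1}$ cover $G$, and combining this cover with the estimate $U_{1}U_{1}^{-1}\subseteq U$ and the $\pi$-base transfer from $\gamma$ should force a cofinite set to be $\sigma$-open, yielding the contradiction; if $\tau$ is not precompact, I would instead build, by transfinite selection exploiting non-precompactness, an infinite $\gamma$-closed (hence $\tau$-closed, via the density transfer) set whose $\sigma$-closure remains proper, producing an infinite proper set closed in both topologies. The main obstacle is precisely this globalization: turning the single local equality $U_{1}\cap V_{1}=\{e\}$ into a set that is open (equivalently closed) in both topologies and neither finite nor cofinite, while honestly compensating for the missing continuous inversion through saturation. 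I expect the precompact case to be comparatively routine once the inversion estimate is secured, and the non-precompact case, which is the one relevant to the Sorgenfrey line, to require the most care.
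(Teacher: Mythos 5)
Your reduction---find a nonempty, non-cofinite set lying in $\tau\cap\sigma$---is exactly the mechanism underlying the paper's proof, and your reformulation of the $SD$ condition (every $\gamma$-dense set is $\tau$-dense iff every nonempty $\tau$-open set contains a nonempty $\gamma$-open set) is correct. But the route you propose to that set has a genuine gap at its center. Saturation of $\gamma$ says only that $W^{-1}$ has nonempty $\gamma$-interior for $W\in\gamma(e)$; that interior need not contain $e$, so it cannot be parlayed into a neighborhood $U_{1}\in\tau(e)$ with $U_{1}U_{1}^{-1}\subseteq U$. The symmetric-shrinking step of the Dikranjan--Tkachenko--Yaschenko argument really does require continuous inversion and is not recoverable from saturation. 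Worse, you propose to refine $V\in\sigma(e)$ ``in the same way,'' but nothing is assumed about $\sigma$ beyond its being a Hausdorff paratopological group topology: it has no saturated coarsening and no $SD$ property, so that half of the plan has no support at all. Finally, the globalization---the precompact/non-precompact dichotomy and the transfinite construction of a common closed set---is left entirely unexecuted, and by your own admission the non-precompact branch (the one covering the Sorgenfrey line) is the hard one; as written, the proposal establishes nothing in that case.

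The paper sidesteps all of this with one observation your plan is missing: if $U\in\sigma(e)$ and $O$ is any nonempty $\tau$-open set, then $UO=\bigcup_{u\in U}uO=\bigcup_{o\in O}Uo$ is simultaneously $\tau$-open (left translates of $O$) and $\sigma$-open (right translates of $U$), hence lies in $\tau\cap\sigma$ and must be cofinite by $T_{1}$-independence. Saturation enters only to supply such an $O$: if some $U\in\sigma(e)$ were not $\gamma$-dense, pick $g\in G$ and $W\in\gamma(e)$ with $gW^{2}\cap U=\emptyset$; then $gW\cap UW^{-1}=\emptyset$, and taking $O=\mathrm{int}_{\gamma}(W^{-1})\neq\emptyset$ puts the infinite set $gW$ inside $G\setminus UO$, contradicting cofiniteness of $UO$. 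So every $U\in\sigma(e)$ is $\gamma$-dense, hence $\tau$-dense by the $SD$ property, whence $U\cap V$ is infinite for every $V\in\tau(e)$---contradicting transversality directly. No symmetric neighborhoods, no precompactness dichotomy, and no closed-set construction is needed; the key idea absent from your proposal is the two-sided product $UO$, which manufactures the common open set in one line.
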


\begin{proof}
Suppose not, assume that $G$ admits a $T_{1}$-complementary Hausdorff paratopological group topology $\sigma$ such that $\sigma$ and $\tau$ are $T_{1}$-complementary. Since $(G, \tau)$ is a $SD$-paratopological group, there exists a Hausdorff saturated paratopological group topology $\gamma$ weaker than $\tau$ such that each dense subset in $(G, \gamma)$ is dense in $(G, \tau)$.
We claim that $U$ is dense in the topology $\tau$ for any $U\in\sigma(e)$. Indeed, since $(G, \tau)$ is a $SD$-paratopological group, it suffices to prove that $U$ is dense in the topology $\gamma$ for any $U\in\sigma(e)$.

Suppose not, there exist $g\in G$ and $W\in \gamma(e)$ such that $gW^{2}\cap U=\emptyset$, hence $gW\cap UW^{-1}=\emptyset$. Since $(G, \gamma)$ is a saturated paratopological group, the interior of $W^{-1}$ in $\gamma$ is non-empty. Let $O=\mbox{int}_{\gamma}(W^{-1})$. Clearly, $O$ is open in $\tau$ and $gW\cap UO=\emptyset$, which shows that $G\setminus UO$ is infinite. However, since $\sigma$ and $\tau$ are $T_{1}$-independent, $G\setminus UO$ must be finite, which is a contradiction. Therefore, $U$ is dense in the topology $\gamma$ for any $U\in\sigma(e)$.

Therefore, $V\cap U$ is infinite for any $U\in\sigma(e)$ and $V\in\tau(e)$.
However, since $\tau$ and $\sigma$ are transveral, there exist $U\in \sigma(e)$ and $V\in\sigma (e)$ such that $U\cap V=\{0\}$, which is a contradiction. Therefore, there does not exits a Hausdorff paratopological group topology $\sigma$ on $G$ such that $\sigma$ and $\tau$ are $T_{1}$-complementary.
\end{proof}

Obviously, the Sorgenfrey line $\mathbb{S}$ is a saturated paratopological group and each saturated paratopological group is a $SD$-paratopological group. Hence we have the following corollary by Theorem~\ref{t0}, which gives a negative answer to Problem~\ref{q1}.

\begin{corollary}\label{cooo}
For any an infinite Hausdorff saturated paratopological group $(G, \tau)$, there does not exist a Hausdorff paratopological group topology $\sigma$ on $G$ such that $\sigma$ and $\tau$ are $T_{1}$-complementary; in particular, the Sorgenfrey line $\mathbb{S}$ does not adimit a $T_{1}$-complementary Hausdorff paratopological group topology.
\end{corollary}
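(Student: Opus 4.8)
The plan is to derive Corollary~\ref{cooo} directly from Theorem~\ref{t0} in two stages. First I would observe that the general statement for saturated paratopological groups is an immediate specialization of Theorem~\ref{t0}: the paper has already remarked that every saturated paratopological group is an $SD$-paratopological group (one takes $\gamma=\tau$ in the definition of $SD$-paratopological group, since then trivially each dense subset of $(G,\gamma)$ is dense in $(G,\tau)$, and $\tau$ itself is saturated and weaker-or-equal to $\tau$). Hence any infinite Hausdorff saturated paratopological group $(G,\tau)$ falls under the hypotheses of Theorem~\ref{t0}, so no Hausdorff paratopological group topology $\sigma$ on $G$ can be $T_{1}$-complementary to $\tau$.

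Second, I would apply this to the concrete case of the Sorgenfrey line. The key verification is that $\mathbb{S}$ is an infinite Hausdorff saturated paratopological group. It is a standard fact (and is asserted in the paper's introductory remarks) that $\mathbb{S}=(\mathbb{R},\tau_{S})$, with base of half-open intervals $[a,b)$, is a Hausdorff paratopological group under addition which is not a topological group. Infiniteness is clear. For saturation I would check the defining condition: for a basic neighborhood $U=[0,b)$ of the identity $0$, the set $U^{-1}=(-b,0]$ has nonempty Sorgenfrey-interior, namely $(-b,0)$ contains basic open sets $[a,0)$ with $-b<a<0$. Thus every neighborhood of $0$ has inverse with nonempty interior, so $\mathbb{S}$ is saturated.

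Combining the two stages, $\mathbb{S}$ satisfies all the hypotheses of the saturated case just established, and therefore admits no Hausdorff paratopological group topology that is $T_{1}$-complementary to $\tau_{S}$; equivalently, $\mathbb{S}$ does not admit a $T_{1}$-complementary Hausdorff paratopological group topology. This resolves Problem~\ref{q1} in the negative.

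I do not anticipate a genuine obstacle here, since the corollary is a routine instantiation of the theorem once the structural facts are in place. The only point requiring a small amount of care is the saturation check for $\mathbb{S}$: one must compute $U^{-1}$ for a basic neighborhood and confirm its interior is nonempty in the Sorgenfrey topology, being attentive to the half-open orientation of the intervals (the inverse of a right-half-open interval about $0$ is a left-half-open interval, whose interior is the corresponding open interval). Everything else is a direct appeal to Theorem~\ref{t0} and to the already-noted implication that saturated groups are $SD$-paratopological groups.
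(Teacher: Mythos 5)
Your proposal is correct and follows exactly the paper's route: the paper likewise observes that every saturated paratopological group is an $SD$-paratopological group and that $\mathbb{S}$ is saturated, then invokes Theorem~\ref{t0}. The only difference is that you spell out the saturation check for $\mathbb{S}$ (computing $-[0,b)=(-b,0]$ and noting its nonempty Sorgenfrey interior $(-b,0)$), which the paper dismisses as obvious; your computation is accurate.
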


By the proof of Theorem~\ref{t0}, it easily prove the following theorem.

\begin{theorem}\label{t1}
For any an infinite $SD$-paratopological group $(G, \tau)$ being $T_{0}$, there does not exist a $T_{0}$-paratopological group topology $\sigma$ on $G$ such that $\sigma$ and $\tau$ are complementary.
\end{theorem}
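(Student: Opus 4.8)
The plan is to mirror the proof of Theorem~\ref{t0}, replacing Hausdorffness by $T_{0}$ and $T_{1}$-complementarity by ordinary complementarity. Recall that $\sigma$ and $\tau$ being complementary means precisely that $\sigma\cap\tau$ is the indiscrete topology (the intersection condition) and $\sigma\vee\tau$ is discrete (the transversality condition). So I would assume for contradiction that such a $T_{0}$ paratopological group topology $\sigma$ exists, fix the saturated topology $\gamma\leq\tau$ witnessing that $(G,\tau)$ is an $SD$-paratopological group, and prove, exactly as in Theorem~\ref{t0}, that every $U\in\sigma(e)$ is $\tau$-dense; the contradiction then has to be extracted from transversality.

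For the density step I expect the argument to be \emph{simpler} than in Theorem~\ref{t0}, precisely because $\sigma\cap\tau$ is now the indiscrete topology rather than merely the cofinite one. As there, it suffices to show each $U\in\sigma(e)$ (which I may take $\sigma$-open) is $\gamma$-dense and then invoke the $SD$-property to pass to $\tau$-density. If some $U$ were not $\gamma$-dense, then by continuity of multiplication in $(G,\gamma)$ there are $g\in G$ and $W\in\gamma(e)$ with $gW^{2}\cap U=\emptyset$, whence $gW\cap UW^{-1}=\emptyset$; putting $O=\mathrm{int}_{\gamma}(W^{-1})$, which is nonempty by saturation and $\tau$-open since $\gamma\leq\tau$, I get $gW\cap UO=\emptyset$. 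Now $UO=\bigcup_{o\in O}Uo$ is $\sigma$-open and $UO=\bigcup_{u\in U}uO$ is $\tau$-open, so $UO\in\sigma\cap\tau$, which is the indiscrete topology; since $UO\neq\emptyset$, this forces $UO=G$, contradicting $gW\cap UO=\emptyset$ with $gW\neq\emptyset$. Hence every $U\in\sigma(e)$ is $\gamma$-dense and therefore $\tau$-dense.

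The main obstacle is the final step: in Theorem~\ref{t0} one used $T_{1}$-ness to turn density into ``$U\cap V$ is infinite'' and thereby contradict transversality, but here only $T_{0}$ is available. My plan is as follows. Transversality gives a $\sigma$-open $U_{0}\ni e$ and a $\tau$-open $V_{0}\ni e$ with $U_{0}\cap V_{0}=\{e\}$. Writing $U_{0}=\{e\}\cup(U_{0}\setminus V_{0})$, using that $U_{0}$ is $\tau$-dense and that $U_{0}\setminus V_{0}\subseteq G\setminus V_{0}$ is contained in the $\tau$-closed set $G\setminus V_{0}$, I obtain $G=\overline{U_{0}}^{\tau}\subseteq\overline{\{e\}}^{\tau}\cup(G\setminus V_{0})$, that is, $V_{0}\subseteq\overline{\{e\}}^{\tau}$. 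I would then use the standard identity $\overline{\{e\}}^{\tau}=N^{-1}$ with $N=\bigcap\tau(e)$; since $V_{0}\in\tau(e)$ gives $N\subseteq V_{0}$, this yields $N\subseteq V_{0}\subseteq N^{-1}$, forcing $N=N^{-1}$ and hence $V_{0}=N$. Thus $N$ is a $\tau$-open subgroup (a subsemigroup by continuity of multiplication, symmetric by $N=N^{-1}$) coinciding with the smallest neighborhood of $e$. If $N\neq\{e\}$, pick $x\in N\setminus\{e\}$; since $xN=N$, the points $e$ and $x$ have the same minimal $\tau$-open neighborhood and so are topologically indistinguishable, contradicting $T_{0}$. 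If $N=\{e\}$, then $\{e\}$ is $\tau$-open, so by homogeneity $\tau$ is discrete, whence $\sigma=\sigma\cap\tau$ is the indiscrete topology, impossible for a $T_{0}$ topology on the infinite set $G$. Either way we reach a contradiction, proving that no complementary $T_{0}$ paratopological group topology $\sigma$ exists.
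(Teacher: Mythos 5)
Your proof is correct, and it supplies more than the paper does: the paper's entire ``proof'' of this theorem is the remark that it follows from the proof of Theorem~\ref{t0}, which is not quite accurate as stated. Your density step does follow the template of Theorem~\ref{t0}, but simplifies it in exactly the right way: where that proof invokes $T_{1}$-independence to conclude that $G\setminus UO$ is finite, you observe that $UO$ is simultaneously $\sigma$-open and $\tau$-open, hence belongs to $\sigma\cap\tau$, which complementarity makes indiscrete, so $UO=G$ outright, contradicting $gW\cap UO=\emptyset$. The genuine divergence is the endgame, and it is a necessary one: in Theorem~\ref{t0} the contradiction comes from density plus $T_{1}$-ness forcing $U\cap V$ to be infinite, whereas under mere $T_{0}$ density of $U_{0}$ only yields $U_{0}\cap V_{0}\neq\emptyset$, which is perfectly compatible with the transversality witness $U_{0}\cap V_{0}=\{e\}$; so the paper's final step does not adapt verbatim and your replacement is needed. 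That replacement checks out: from $\tau$-density of $U_{0}$ and $U_{0}=\{e\}\cup(U_{0}\setminus V_{0})$ you correctly deduce $V_{0}\subseteq\overline{\{e\}}^{\tau}$; the identity $\overline{\{e\}}^{\tau}=N^{-1}$ with $N=\bigcap\tau(e)$ is valid in any paratopological group (a basic neighborhood of $x$ is $xV$, and $e\in xV$ for all $V\in\tau(e)$ iff $x^{-1}\in N$); $N\subseteq V_{0}\subseteq N^{-1}$ gives $N=N^{-1}$ by inverting the inclusion and hence $V_{0}=N$; $N$ is a subsemigroup by joint continuity of multiplication, so an open subgroup, and $yN$ is the minimal open neighborhood of each $y$, so $N\neq\{e\}$ contradicts $T_{0}$ via indistinguishable points, while $N=\{e\}$ makes $\tau$ discrete, forcing $\sigma=\sigma\cap\tau$ to be indiscrete and hence not $T_{0}$ on a set with more than one point. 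In short: same skeleton as the paper for the density half (with the cleaner indiscrete-intersection argument), plus a new, genuinely required argument for the final contradiction that the paper's one-line proof glosses over.
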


Next, we prove that for a countable infinite group, the existence of $T_{1}$-complementary Hausdorff paratopological group topologies is equivalent to the existence of $T_{1}$-independent Hausdorff paratopological group topologies.

\begin{theorem}\label{t2}
For a countable infinite group $G$, it does not admit a pair of transversal $T_{1}$-independent Hausdorff paratopological group topologies if and only if it does not admit a pair of $T_{1}$-independent Hausdorff paratopological group topologies.
\end{theorem}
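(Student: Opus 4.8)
The plan is to prove the equivalence in its positive form, namely that $G$ admits a pair of transversal $T_{1}$-independent (equivalently, $T_{1}$-complementary) Hausdorff paratopological group topologies if and only if it admits a pair of $T_{1}$-independent ones. One implication is immediate: a transversal $T_{1}$-independent pair is in particular a $T_{1}$-independent pair, so the nonexistence of the latter forces the nonexistence of the former. All the content is in the converse, for which I fix a $T_{1}$-independent pair $\tau_{1},\tau_{2}$ and produce from it a transversal $T_{1}$-independent pair. I will keep $\tau_{1}$ and replace $\tau_{2}$ by a suitable Hausdorff paratopological group topology $\sigma$, so that the problem becomes the per-topology statement: a topology admitting a $T_{1}$-independent complement admits a $T_{1}$-complementary one.

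First I would record two reductions. Since the union of two paratopological group topologies generates a paratopological group topology, transversality of $(\tau_{1},\sigma)$ is equivalent to the identity $e$ being isolated in the join, i.e. to the existence of $U\in\tau_{1}(e)$ and $V\in\sigma(e)$ with $U\cap V=\{e\}$; thus $T_{1}$-complementarity of $(\tau_{1},\sigma)$ means exactly that $\tau_{1}\cap\sigma$ is the cofinite topology and some such $U,V$ exist. Next, both $\tau_{1}$ and $\tau_{2}$ are automatically non-discrete (if one were discrete, $T_{1}$-independence would force the other to be the cofinite topology, which is not Hausdorff on the infinite set $G$), and each of them has, for every $n$, a neighbourhood of $e$ with complement of size at least $n$: otherwise the neighbourhood filter at $e$ would be the cofinite filter and, by homogeneity, the topology would be cofinite, again contradicting Hausdorffness. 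In particular I may fix a coinfinite $U\in\tau_{1}(e)$ and set $D=U\setminus\{e\}$; the goal is to build $\sigma$ with $e\notin\overline{D}^{\,\sigma}$, equivalently with a neighbourhood $V$ of $e$ satisfying $V\cap D=\emptyset$, while keeping $\tau_{1}\cap\sigma$ cofinite.

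The construction of $\sigma$ is where countability enters. Enumerate $G=\{g_{n}:n\in\omega\}$ with $g_{0}=e$, and build $\sigma$ by a recursion that produces a decreasing chain $V_{0}\supseteq V_{1}\supseteq\cdots$ serving as a neighbourhood base at $e$. At even stages I expel points: I shrink $V_{n}$ so as to push $e$ off the closure of $D$ and, more generally, to separate $e$ from each enumerated point, guaranteeing both $V\cap D=\emptyset$ (hence transversality with $\tau_{1}$ through the fixed $U$) and the Hausdorff property. At odd stages I protect $T_{1}$-independence: running through an enumeration of the countably many candidate sets that could become open in both $\tau_{1}$ and $\sigma$, I choose $V_{n}$ so that any such common set with infinite complement is destroyed, leaving only cofinite sets in $\tau_{1}\cap\sigma$. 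Throughout, I use the group multiplication to interpolate square-root neighbourhoods, choosing at each stage $V_{n+1}$ with $V_{n+1}V_{n+1}\subseteq V_{n}$ and with $gV_{n+1}g^{-1}\subseteq V_{n}$ for the finitely many group elements active so far, so that the resulting neighbourhood filter satisfies the axioms characterising a paratopological group topology.

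The main obstacle is the tension between the two families of stages. Isolating $e$ against $\tau_{1}$ forces $\sigma$ to be fine, which is exactly what tends to create new sets open in both $\tau_{1}$ and $\sigma$ and thereby to enlarge the intersection beyond the cofinite topology; conversely, keeping $\tau_{1}\cap\sigma$ cofinite restricts how fine $\sigma$ may be. Countability is essential precisely because it lets the recursion diagonalise simultaneously against all points (for the Hausdorff and transversal requirements) and against all relevant candidate common open sets (for $T_{1}$-independence); the existence of the given pair $\tau_{1},\tau_{2}$ is used to guarantee that $G$ is topologisable at all, since a non-topologisable group carries no non-discrete Hausdorff paratopological group topology and so satisfies neither side of the equivalence. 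The remaining delicate point, which I expect to demand the most careful bookkeeping, is verifying joint continuity of multiplication for the limit topology: one must ensure that the shrinking performed at the separation and protection stages never destroys the nested inclusions $V_{n+1}V_{n+1}\subseteq V_{n}$, so that $\sigma$ is genuinely a paratopological group topology rather than merely a group-invariant filter.
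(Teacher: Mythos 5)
Your decomposition (trivial direction plus a converse producing a transversal pair from a $T_{1}$-independent one) matches the logical shape of the theorem, but the construction you sketch for the converse has a decisive structural flaw: apart from invoking topologizability, you never use the hypothesized partner $\tau_{2}$. Your recursion takes as input only a countable group $G$ and a non-discrete Hausdorff paratopological group topology $\tau_{1}$ with a coinfinite neighborhood of $e$. If it succeeded under those inputs alone, it would manufacture a $T_{1}$-complementary partner for every non-discrete Hausdorff paratopological group topology on $\mathbb{Z}$ --- contradicting Theorem~\ref{t3}, which asserts that a countable infinite Abelian group admits no pair of $T_{1}$-independent Hausdorff paratopological group topologies at all. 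So the ``protection'' stages cannot always be carried out, and your sketch offers no mechanism by which the existence of $\tau_{2}$ would rescue them. There is also a concrete cardinality failure at exactly that point: you assert there are ``countably many candidate sets that could become open in both $\tau_{1}$ and $\sigma$,'' but $\tau_{1}$ is an arbitrary Hausdorff topology on $G$ and may have weight (and hence a family of co-infinite open sets) of size $2^{\omega}$; an $\omega$-length recursion cannot diagonalize against continuum many requirements, and whether a given candidate $A$ is $\sigma$-open depends on the whole limit base, not on any single stage. Finally, you attempt a strictly stronger statement than the theorem needs --- that the \emph{same} $\tau_{1}$ admits a $T_{1}$-complementary partner --- which nothing in the paper supports.

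The paper's proof avoids any construction and handles the cardinality problem first: by Shakhmatov's theorem it replaces each $\tau_{i}$ by a weaker Hausdorff paratopological group topology $\sigma_{i}$ with $w(G,\sigma_{i})=\omega$, verifies that $T_{1}$-independence survives the weakening (the factorization argument writing $x=y_{n}z_{n}$ with $y_{n}\to e$ in $\sigma_{1}$, $z_{n}\to e$ in $\sigma_{2}$ shows $\bigcap\mathscr{A}=\{e\}$, and $\sigma_{1}\cap\sigma_{2}\subset\tau_{1}\cap\tau_{2}$ gives the other inclusion), and then argues by contradiction: if $(\sigma_{1},\sigma_{2})$ were not transversal, the join would be a non-discrete first-countable Hausdorff paratopological group topology, hence would contain a nontrivial sequence $a_{n}\to e$; the set $K=\{e\}\cup\{a_{n}:n\in\mathbb{N}\}$ is infinite, closed in both $\sigma_{1}$ and $\sigma_{2}$, and has infinite complement, which is incompatible with $\sigma_{1}\cap\sigma_{2}$ being the cofinite topology. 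Passing to countable weight is precisely the reduction your plan lacks, and even granted it, the conclusion is reached by this convergent-sequence contradiction rather than by building a complement by transfinite bookkeeping.
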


\begin{proof}
Clearly, it suffices to prove the necessity. Assume that $G$ does not admit a pair of transversal $T_{1}$-independent Hausdorff paratopological group topologies, and assume that $\tau_{1}$ and $\tau_{2}$ are $T_{1}$-independent Hausdorff paratopological group topologies on a countable infinite group $G$. By Shakhmatov's result in \cite{Sh1984} (see also \cite[Problem 5.2.A]{AT2008}), for each $i=1, 2$, there exists a Hausdorff paratopological group topology $\sigma_{i}\subset \tau_{i}$ on $G$ such that $w(G, \sigma_{i})=\omega$. We claim that $\sigma_{1}$ and $\sigma_{2}$ are also $T_{1}$-independent. Indeed, let $\mathscr{A}$ be the family all neighborhoods at $e$ in $\sigma_{1}\wedge \sigma_{2}$. Then it suffices to prove $\bigcap\mathscr{A}=\{e\}$. Since $w(G, \sigma_{i})=\omega$ for $i=1, 2$, there exist two decreasing sequences of subsets $\{U_{n}: n\in\mathbb{N}\}$ and $\{V_{n}: n\in\mathbb{N}\}$ of $G$ such that $\{U_{n}: n\in\mathbb{N}\}$ and $\{V_{n}: n\in\mathbb{N}\}$ are open neighborhood bases of $\sigma_{1}$ and $\sigma_{2}$ at $e$ respectively. Take an arbitrary $x\in\bigcap\mathscr{A}$. Then it is easy to see that $x\in\bigcap\mathscr{A}\subset \bigcap\{U_{n}V_{n}: n\in\mathbb{\mathbb{N}}\}$. Then for each $n\in\mathbb{N}$ there exist $y_{n}\in U_{n}$ and $z_{n}\in V_{n}$ such that $x=y_{n}z_{n}$. Clearly, $\{y_{n}\}$ and $\{z_{n}\}$ converge to $e$ in  $\sigma_{1}$ and $\sigma_{2}$ respectively, hence $\{y_{n}z_{n}\}$ converges to $e$ in $\sigma_{1}\wedge \sigma_{2}$. Thus $x=e$.

By the assumption, it follows that the union $\sigma_{1}\cup\sigma_{2}$ generates a non-discrete Hausdorff paratopological group topology $\tau$ on $G$. Obviously, $w(G, \tau)=\omega$, hence there exists a non-trivial sequence $\{a_{n}: n\in\mathbb{N}\}$ in $(G, \tau)$ converging to $e$. Therefore, the set $K=\{e\}\cup\{a_{n}: n\in\mathbb{N}\}$ is an infinite closed subset of both $(G, \sigma_{1})$ and $(G, \sigma_{2})$ with the infinite complement $G\setminus K$, which is a contradiction.
\end{proof}

Finally, we show that each countable infinite Abelian group does not admit a pair of $T_{1}$-independent Hausdorff paratopological group topologies, which gives a supplement to Theorem~\ref{t2}.

\begin{theorem}\label{t3}
 A countable infinite Abelian group does not admit a pair of $T_{1}$-independent Hausdorff paratopological group topologies.
\end{theorem}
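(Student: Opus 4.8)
The plan is to argue by contradiction, reducing everything to the production of a single infinite, co-infinite subset of $G$ that is closed in \emph{both} topologies; since $T_1$-independence means precisely that $\sigma_1\cap\sigma_2$ is the cofinite topology, the existence of such a set is forbidden. So suppose $\tau_1,\tau_2$ are $T_1$-independent Hausdorff paratopological group topologies on a countable infinite Abelian group $G$. First I would pass to smaller topologies exactly as in the proof of Theorem~\ref{t2}: Shakhmatov's theorem supplies Hausdorff paratopological group topologies $\sigma_i\subseteq\tau_i$ with $w(G,\sigma_i)=\omega$, and these remain $T_1$-independent, because $\sigma_1\cap\sigma_2\subseteq\tau_1\cap\tau_2$ is contained in the cofinite topology while $T_1$-ness of $\sigma_i$ gives the reverse inclusion. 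Each $\sigma_i$ is non-discrete, since otherwise the finer $\tau_i$ would be discrete and $\tau_1\cap\tau_2$ would be a Hausdorff cofinite topology on an infinite set. Being second countable, non-discrete and homogeneous with $e$ non-isolated, $(G,\sigma_1)$ carries a nontrivial sequence $\{a_n\}$ with $a_n\to_{\sigma_1}e$ and $a_n\neq e$.

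Next I would analyse $A=\{e\}\cup\{a_n:n\in\omega\}$. As a convergent sequence together with its limit, $A$ is $\sigma_1$-compact, hence $\sigma_1$-closed. Now split into two cases according to whether $A$ is $\sigma_2$-closed. If it is, then $A$ is closed in both topologies. If it is not, choose $x\in\overline{A}^{\sigma_2}\setminus A$; first countability of $\sigma_2$ yields a sequence in $A$ converging to $x$ in $\sigma_2$, and since $x\notin A$ this sequence runs through infinitely many distinct $a_n$, producing a subsequence with $a_{n_k}\to_{\sigma_2}x$ and still $a_{n_k}\to_{\sigma_1}e$. The set $C=\{e,x\}\cup\{a_{n_k}:k\in\omega\}$ then has a single accumulation point in each topology ($e$ in $\sigma_1$ and $x$ in $\sigma_2$, by Hausdorffness and uniqueness of limits), so $C$ is closed in both $\sigma_1$ and $\sigma_2$. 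In either case I obtain an infinite set $F$ (namely $A$ or $C$) that is $\sigma_1$-compact and closed in both topologies.

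The step I expect to be the crux is showing that $F$ is \emph{co-infinite}, for the construction above would be innocuous if $F$ were permitted to be cofinite. Here the group structure is essential. If $F$ were cofinite, then $G$ would be the union of the $\sigma_1$-compact set $F$ with a finite set, hence $\sigma_1$-compact; but a compact Hausdorff paratopological group is known to be a topological group, and an infinite compact Hausdorff topological group has cardinality at least $\mathfrak{c}$, contradicting the countability of $G$. Thus $F$ is infinite and co-infinite, so $G\setminus F$ is a nonempty, non-cofinite member of $\sigma_1\cap\sigma_2$, contradicting $T_1$-independence. This completes the argument; the commutativity of $G$ enters, as in Theorem~\ref{t2}, in securing that the reduction and the sequence extraction are carried out inside honest paratopological group topologies, while the combinatorial heart is the dichotomy on $A$ together with the compactness obstruction to co-infiniteness.
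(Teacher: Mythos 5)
Your proof is correct, and it takes a genuinely different route from the paper's. The paper's proof of Theorem~\ref{t3} performs the same Shakhmatov reduction as in Theorem~\ref{t2}, but then uses commutativity twice through citations: it replaces $\sigma_{1},\sigma_{2}$ by Hausdorff \emph{topological} group topologies via \cite[Theorem 3.8]{LL2012}, and it invokes \cite[Theorem 5.4]{AT2017} to conclude that $\sigma_{1}\cup\sigma_{2}$ generates a \emph{non-discrete} Hausdorff topology $\tau$; the whole point of these two steps is to manufacture a single nontrivial sequence converging to $e$ simultaneously in both topologies, so that $K=\{e\}\cup\{a_{n}\}$ is closed in each $\sigma_{i}$ and contradicts $T_{1}$-independence. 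You sidestep simultaneous convergence entirely: you take a null sequence in $\sigma_{1}$ alone and run a dichotomy on whether $A=\{e\}\cup\{a_{n}\}$ is $\sigma_{2}$-closed, using second countability of $\sigma_{2}$ to extract, in the bad case, a selection $\sigma_{2}$-converging to some $x\notin A$; in each topology your set ($A$ or $C$) is a union of a compact convergent-sequence-with-limit and a singleton, hence closed, and your co-infiniteness argument (a cofinite closed set would make $(G,\sigma_{1})$ compact, and a compact Hausdorff paratopological group is a topological group, hence of cardinality at least $\mathfrak{c}$) is sound --- though one can get co-infiniteness even more cheaply by thinning the sequence so its range omits infinitely many elements. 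Two remarks. First, the one step you gloss is the extraction: a selection of distinct values from the range of an injective $\sigma_{1}$-null sequence need not be a subsequence in the original order, but any injective selection from that range is still $\sigma_{1}$-null (given a neighborhood $U$ of $e$, only finitely many range elements lie outside $U$, and injectivity lets the selection meet that finite set only finitely often), so this is routine. Second, and more interestingly, your closing attribution of a role to commutativity is inaccurate: the Shakhmatov reduction in Theorem~\ref{t2} is applied by the paper to arbitrary countable groups, and nothing else in your argument uses the Abelian hypothesis, so modulo the same appeal to Shakhmatov's condensation result that the paper itself makes, your proof establishes the conclusion for \emph{every} countable infinite group --- formally stronger than Theorem~\ref{t3}, and strong enough to make the equivalence in Theorem~\ref{t2} vacuous. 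That is worth flagging explicitly rather than hiding: either your argument is a genuine strengthening (in which case the dichotomy-plus-compactness idea is the real contribution, replacing the paper's reliance on the Abelian-specific results \cite[Theorem 3.8]{LL2012} and \cite[Theorem 5.4]{AT2017}), or any doubt must fall on the shared reduction step, which affects the paper's proof equally.
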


\begin{proof}
Assume that $\tau_{1}$ and $\tau_{2}$ are $T_{1}$-independent Hausdorff paratopological group topologies on the countable infinite group $G$. By the same notation in the proof of Theorem~\ref{t2}, it may assume that the topologies $\sigma_{1}$ and $\sigma_{2}$ in Theorem~\ref{t2} are Hausdorff topological group topologies, see the proof in \cite[Theorem 3.8]{LL2012}. Then it follows from \cite[Theorem 5.4]{AT2017} that $\sigma_{1}\cup\sigma_{2}$ generates a non-discrete Hausdorff paratopological group topology $\tau$ on $G$. From the proof of Theorem~\ref{t2}, we can obtain a contradiction. Hence $G$ does not admit a pair of $T_{1}$-independent Hausdorff paratopological group topologies.
\end{proof}

  \maketitle
\section{Transversality and the submaximal paratopological group topologies}
In this section, we first discuss the submaximal paratopological groups, which plays an important role in the study of the existence a pair of transversal paratopological group topologies on a group.

Maximal paratopological group topologies of $G$ are maximal
elements of the poset $\mathcal{PL}(G)\setminus\{\delta_{G}\}$. An application of the Zorn lemma easily implies
that every element of $\mathcal{PL}(G)\setminus\{\delta_{G}\}$ is contained in a maximal paratopological group topology. The infimum $\mathcal{PM}_{G}$
of all maximal paratopological group topologies on $G$ is the {\it submaximal paratopological group topology} of $G$. The change of the word ``paratopological group'' to ``topological group'' above gives the definition of maximal topological group topology, and the infimum $\mathcal{M}_{G}$
of all maximal group topologies on $G$ is the {\it submaximal group topology} of $G$.

For the convenience, denote by {\bf PTrans} (resp., {\bf Trans}) the class of
paratopological groups (resp., topological groups) that admits a pair of transversal paratopological group topologies (resp., topological group topologies).
Obviously, we have the following proposition.

\begin{proposition}
Let $(G, \tau)$ be a maximal paratopological group. If $G$ is not a topological group, then there exists an open neighborhood $U$ of $e$ such that $U\cap U^{-1}=\{e\}$.
\end{proposition}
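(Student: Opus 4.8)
The plan is to exploit maximality by constructing a paratopological group topology that is at least as fine as $\tau$ and then forcing it to be discrete. The natural candidate is $\tau\vee\tau^{-1}$, where $\tau^{-1}=\{U^{-1}:U\in\tau\}$ denotes the inverse topology. First I would check that $\tau^{-1}$ is again a paratopological group topology: the inversion map is an anti-isomorphism and a homeomorphism from $(G,\tau)$ onto $(G,\tau^{-1})$, so joint continuity of multiplication transfers from $\tau$ to $\tau^{-1}$. Granting this, the fact quoted in the introduction---that the union of two paratopological group topologies generates a paratopological group topology---shows that $\tau\vee\tau^{-1}\in\mathcal{PL}(G)$.

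Next I would identify a neighborhood base at $e$ for $\tau\vee\tau^{-1}$. Every basic neighborhood is a finite intersection of members of $\tau(e)\cup\tau^{-1}(e)$, and collecting the $\tau$-factors into a single $A\in\tau(e)$ and the $\tau^{-1}$-factors into a single $B^{-1}$ with $B\in\tau(e)$, one sees that for $C=A\cap B\in\tau(e)$ the symmetric set $C\cap C^{-1}$ is contained in $A\cap B^{-1}$. Hence $\{U\cap U^{-1}:U\in\tau(e)\}$ is a neighborhood base at $e$ for $\tau\vee\tau^{-1}$; in particular each basic neighborhood is symmetric, so $\tau\vee\tau^{-1}$ is in fact a topological group topology.

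Now I would run the maximality dichotomy. Since $\tau\subseteq\tau\vee\tau^{-1}$ and $\tau$ is maximal in $\mathcal{PL}(G)\setminus\{\delta_{G}\}$, either $\tau\vee\tau^{-1}=\tau$ or $\tau\vee\tau^{-1}=\delta_{G}$. In the first case $\tau^{-1}\subseteq\tau$, and applying inversion gives $\tau=\tau^{-1}$, i.e.\ inversion is continuous and $G$ is a topological group, contradicting the hypothesis. Therefore $\tau\vee\tau^{-1}=\delta_{G}$ is discrete, so $\{e\}$ is a $\tau\vee\tau^{-1}$-neighborhood of $e$; by the base computed above there is $U\in\tau(e)$ with $U\cap U^{-1}=\{e\}$, and replacing $U$ by its $\tau$-interior yields the desired open neighborhood.

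The only genuinely delicate points are the two structural facts feeding the dichotomy: that $\tau^{-1}$ is a paratopological group topology (so that maximality is applicable to $\tau\vee\tau^{-1}$) and the explicit identification of $\{U\cap U^{-1}:U\in\tau(e)\}$ as a neighborhood base at $e$ (so that discreteness can be read off as $U\cap U^{-1}=\{e\}$). Both reduce to routine filter-axiom verifications, so I expect no serious obstacle; the main subtlety is simply to phrase the equivalence ``$\tau=\tau^{-1}$ if and only if $G$ is a topological group'' correctly, since this is exactly what rules out the case $\tau\vee\tau^{-1}=\tau$.
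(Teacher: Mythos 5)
Your proof is correct, and it supplies exactly the argument the paper leaves implicit: the paper states this proposition without proof (introducing it with ``Obviously''), and the intended verification is precisely your dichotomy, namely that the join $\tau\vee\tau^{-1}$ is a paratopological group topology with the symmetric sets $U\cap U^{-1}$, $U\in\tau(e)$, as a neighborhood base at $e$, so maximality of $\tau$ forces either $\tau=\tau^{-1}$ (making $(G,\tau)$ a topological group, excluded by hypothesis) or discreteness of the join, which yields $U\cap U^{-1}=\{e\}$. The same device of passing to the inverse topology $\tau^{-1}$ is also what drives the paper's proof of Theorem~\ref{tttt}, so your route matches the paper's framework throughout.
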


What is the relation about $\mathcal{PM}_{G}$ and $\mathcal{M}_{G}$ on group $G$? In order to answer this question, we first show the following theorem. Indeed, it is not surprise that $\mathcal{PM}_{G}$ is a topological group.

\begin{theorem}\label{tttt}
The $\mathcal{PM}_{G}$ is a topological group.
\end{theorem}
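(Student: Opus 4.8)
The plan is to exploit the involution on the lattice of paratopological group topologies induced by the group inversion and to show that $\mathcal{PM}_{G}$ is a fixed point of it. For a paratopological group topology $\tau$ on $G$, I would consider its \emph{conjugate} $\tau^{-1}=\{U^{-1}: U\in\tau\}$, where $U^{-1}=\{x^{-1}:x\in U\}$; since inversion is a bijection of $G$, $\tau^{-1}$ is a topology. The one computation to carry out is that $\tau^{-1}$ is again a paratopological group topology, using $(xy)^{-1}=y^{-1}x^{-1}$. Given $x,y\in G$ and a basic $\tau^{-1}$-neighborhood $W^{-1}$ of $xy$ (so $W$ is a $\tau$-neighborhood of $(xy)^{-1}=y^{-1}x^{-1}$), joint continuity of multiplication for $\tau$ at $(y^{-1},x^{-1})$ yields $\tau$-neighborhoods $V_{1}\ni y^{-1}$ and $V_{2}\ni x^{-1}$ with $V_{1}V_{2}\subseteq W$; then $V_{2}^{-1},V_{1}^{-1}$ are $\tau^{-1}$-neighborhoods of $x,y$ and $V_{2}^{-1}V_{1}^{-1}=(V_{1}V_{2})^{-1}\subseteq W^{-1}$, which gives continuity of multiplication for $\tau^{-1}$ at $(x,y)$.

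Next I would observe that $\tau\mapsto\tau^{-1}$ is an order-automorphism of the poset $\mathcal{PL}(G)$: it clearly preserves inclusion and is its own inverse, hence is an order-isomorphism of $\mathcal{PL}(G)$ onto itself, and it fixes the discrete topology $\delta_{G}$. Consequently it restricts to an order-automorphism of $\mathcal{PL}(G)\setminus\{\delta_{G}\}$ and therefore permutes the set $\mathcal{M}$ of all maximal paratopological group topologies on $G$. Since $\mathcal{PM}_{G}=\inf\mathcal{M}$ and any order-isomorphism carries the infimum of a set $S$ to the infimum of its image, the equality $\{M^{-1}:M\in\mathcal{M}\}=\mathcal{M}$ forces $\mathcal{PM}_{G}^{-1}=\inf\{M^{-1}:M\in\mathcal{M}\}=\inf\mathcal{M}=\mathcal{PM}_{G}$. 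Finally, $\tau^{-1}=\tau$ says exactly that $U^{-1}$ is $\tau$-open for every $\tau$-open $U$, i.e.\ inversion is continuous, so $(G,\mathcal{PM}_{G})$ is a topological group.

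The only genuinely delicate point is the step ``the involution preserves the infimum of $\mathcal{M}$''. This is clean provided $\mathcal{PL}(G)$ is a complete lattice, so that $\inf\mathcal{M}$ exists and is respected by any order-automorphism; completeness holds because every family of paratopological group topologies has a supremum (the topology generated by their union is again a paratopological group topology, as recorded in the introduction) and the indiscrete topology is a least element. If one prefers to avoid invoking lattice-completeness, it suffices to use the elementary fact that an order-isomorphism $f$ sends $\inf S$ to $\inf f(S)$ whenever the infima exist, applied to $S=\mathcal{M}$ with $f(\mathcal{M})=\mathcal{M}$. I expect the verification that $\tau^{-1}$ is a paratopological group topology and that the involution is order-preserving to be entirely routine; the conceptual heart of the argument is the invariance $\mathcal{PM}_{G}^{-1}=\mathcal{PM}_{G}$.
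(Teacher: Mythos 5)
Your proof is correct and takes essentially the same route as the paper: both rest on the observation that $\sigma\mapsto\sigma^{-1}$ carries maximal paratopological group topologies to maximal ones, deduce the invariance $\mathcal{PM}_{G}^{-1}=\mathcal{PM}_{G}$ of the infimum (the paper argues it directly, showing $\mathcal{PM}_{G}^{-1}$ is a lower bound of the maximal topologies and then inverting the resulting inequality), and read off continuity of inversion. Your extra care about why the infimum exists in $\mathcal{PL}(G)$ and is respected by the involution just makes explicit a point the paper's proof passes over silently.
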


\begin{proof}
Let $\tau=\mathcal{PM}_{G}$. Obviously, $(G, \tau^{-1})$ is a paratopological group. It suffices to prove that $(G, \tau^{-1})$ is the infimum of all maximal paratopological group topologies on $G$. Indeed, let $\sigma$ be an arbitrary maximal paratopological group topology on $G$. Then $\sigma^{-1}$ is also a maximal paratopological group topology on $G$. Therefore, we have $\tau\leq \sigma$ and $\tau\leq \sigma^{-1}$, then $\tau^{-1}\leq \sigma$. Suppose that $(G, \tau^{-1})$ is not the infimum of all maximal paratopological group topologies on $G$, then  $\tau^{-1}\lneqq \tau$ by the definition of $\mathcal{PM}_{G}$, which implies that $\tau\lvertneqq \tau^{-1}$. This is a contradiction. Therefore, $(G, \tau^{-1})$ is the infimum of all maximal paratopological group topologies on $G$, that is, $\tau^{-1}=\tau$. Thus $\mathcal{PM}_{G}$ is a topological group.
\end{proof}

The following theorem shows that the topology $\mathcal{PM}_{G}$ is coarser than $\mathcal{M}_{G}$.

\begin{theorem}
For any group $G$, we have $\mathcal{PM}_{G}\leq\mathcal{M}_{G}$.
\end{theorem}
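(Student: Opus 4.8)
The plan is to reduce the claim to the pointwise statement that $\mathcal{PM}_{G}\leq\sigma$ holds for \emph{every} maximal topological group topology $\sigma$ on $G$. Once this is established, taking the infimum over all such $\sigma$ gives $\mathcal{PM}_{G}\leq\mathcal{M}_{G}$ directly, since $\mathcal{M}_{G}$ is by definition that infimum. So I would fix a maximal topological group topology $\sigma$ and work to show $\mathcal{PM}_{G}\leq\sigma$.

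The first step is to pass from the topological group world into the paratopological one. Since every topological group topology is in particular a paratopological group topology, $\sigma$ is a non-discrete element of $\mathcal{PL}(G)$, and hence by the Zorn-lemma argument recorded above it is contained in some maximal paratopological group topology $\mu$. From the definition of $\mathcal{PM}_{G}$ as the infimum of all maximal paratopological group topologies we obtain $\mathcal{PM}_{G}\leq\mu$, while $\sigma\leq\mu$ holds by the construction of $\mu$.

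The heart of the argument is then to examine the join $\eta:=\mathcal{PM}_{G}\vee\sigma$. By Theorem~\ref{tttt} the topology $\mathcal{PM}_{G}$ is a topological group topology, and so is $\sigma$; since the supremum of two topological group topologies is again one (the union generates a paratopological group topology, and continuity of inversion for each factor is inherited by the join), $\eta$ is a topological group topology with $\eta\geq\sigma$. At the same time, because both $\mathcal{PM}_{G}$ and $\sigma$ lie below $\mu$, the topology they generate also satisfies $\eta\leq\mu$. As $\mu$ is a maximal paratopological group topology it is non-discrete, i.e. $\mu\neq\delta_{G}$, whence $\eta\neq\delta_{G}$ as well. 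Now maximality of $\sigma$ forces $\eta=\sigma$, since the only topological group topology strictly finer than a maximal one would be $\delta_{G}$, which $\eta$ is not. Therefore $\mathcal{PM}_{G}\leq\eta=\sigma$, completing the pointwise claim.

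The step I expect to carry the weight is the non-discreteness of $\eta$, and this is precisely where routing $\sigma$ through a maximal paratopological group topology $\mu$ (rather than staying inside the lattice of topological group topologies) is indispensable: it is what keeps the join $\mathcal{PM}_{G}\vee\sigma$ from collapsing to $\delta_{G}$ and thus trivializing the maximality argument. The two supporting facts—that joins of topological group topologies remain topological group topologies, and that $\mathcal{PM}_{G}$ is itself a topological group topology—are routine, the latter being exactly Theorem~\ref{tttt}.
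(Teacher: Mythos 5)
Your proof is correct and takes essentially the same route as the paper: reduce to showing $\mathcal{PM}_{G}\leq\sigma$ for each maximal topological group topology $\sigma$, place $\sigma$ under a maximal paratopological group topology $\mu$, and invoke Theorem~\ref{tttt} together with the maximality of $\sigma$. You merely spell out in full the join argument (that $\mathcal{PM}_{G}\vee\sigma$ is a non-discrete topological group topology below $\mu$, hence equal to $\sigma$) which the paper compresses into the single phrase ``it follows from Theorem~\ref{tttt} that $\mathcal{PM}_{G}\leq\tau$.''
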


\begin{proof}
It suffices to prove that $\mathcal{PM}_{G}\leq\tau$ for any maximal topological group topology $\tau$ on $G$. Take any a maximal topological group topology $\tau$ on $G$. By the definition of $\mathcal{PM}_{G}$, we have $\mathcal{PM}_{G}\leq\eta$ for any maximal paratopological group topology $\eta$ on $G$ with $\tau\leq\eta$. Since $\tau$ is maximal group topology, it follows from Theorem~\ref{tttt} that $\mathcal{PM}_{G}\leq\tau$.
\end{proof}

However, the following two questions are still unknown for us.

\begin{question}
Does $\mathcal{PM}_{G}=\mathcal{M}_{G}$ hold?
\end{question}

\begin{question}
Let $(G, \tau)$ be a maximal paratopological group. Is $\tau\wedge\tau^{-1}$ a maximal topological group on $G$? What if we assume
the $G$ to be Abelian?
\end{question}

The following theorem is important in the study of the transversality of paratopological groups, which gives a characterization on a non-discrete paratopological group being transversal.

\begin{theorem}\label{tttt1}
Let $(G, \tau)$ be a non-discrete paratopological group topology. Then $(G, \tau)\in$ {\bf PTrans}
iff $\tau\nleqslant \mathcal{PM}_{G}$.
\end{theorem}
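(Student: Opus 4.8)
The plan is to prove both implications directly from the order structure of the lattice $\mathcal{PL}(G)$ of paratopological group topologies, using two facts recorded earlier: that the join $\tau\vee\sigma$ (the topology generated by $\tau\cup\sigma$) of two paratopological group topologies is again a paratopological group topology, and that every element of $\mathcal{PL}(G)\setminus\{\delta_{G}\}$ is contained in a maximal paratopological group topology. Here $\delta_{G}$ denotes the discrete topology, and I read $(G,\tau)\in$ {\bf PTrans} as the existence of a non-discrete paratopological group topology $\sigma$ on $G$ with $\tau\vee\sigma=\delta_{G}$.

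For sufficiency, suppose $\tau\nleqslant\mathcal{PM}_{G}$. Since $\mathcal{PM}_{G}$ is the infimum (greatest lower bound) of the family of all maximal paratopological group topologies, $\tau$ cannot be a lower bound for this family; hence there is a maximal paratopological group topology $\mu$ with $\tau\nleqslant\mu$. Then $\mu\lneqq\tau\vee\mu$, and since $\mu$ is a maximal element of $\mathcal{PL}(G)\setminus\{\delta_{G}\}$, the only paratopological group topology strictly finer than $\mu$ is $\delta_{G}$; thus $\tau\vee\mu=\delta_{G}$. Both $\tau$ and $\mu$ are non-discrete (the former by hypothesis, the latter because $\mu\neq\delta_{G}$), and $\tau\neq\mu$ since otherwise $\tau\vee\mu=\tau\neq\delta_{G}$. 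Therefore $\tau$ and $\mu$ are transversal, witnessing $(G,\tau)\in$ {\bf PTrans}.

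For necessity, suppose $(G,\tau)\in$ {\bf PTrans} but, for contradiction, $\tau\leq\mathcal{PM}_{G}$. Fix a non-discrete paratopological group topology $\sigma$ with $\tau\vee\sigma=\delta_{G}$. By the Zorn-lemma argument, $\sigma$ is contained in some maximal paratopological group topology $\mu$, and $\mathcal{PM}_{G}\leq\mu$ because $\mathcal{PM}_{G}$ is a lower bound for the maximal topologies. Combining, $\tau\leq\mathcal{PM}_{G}\leq\mu$ and $\sigma\leq\mu$, so $\delta_{G}=\tau\vee\sigma\leq\mu$, forcing $\mu=\delta_{G}$ and contradicting the maximality (hence non-discreteness) of $\mu$. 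Hence $\tau\nleqslant\mathcal{PM}_{G}$.

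The argument is essentially lattice-theoretic once the two structural facts are in hand, so no step is genuinely hard; the point that requires care is the passage $\mu\lneqq\tau\vee\mu\Rightarrow\tau\vee\mu=\delta_{G}$, which is exactly where the maximality of $\mu$ and the closure of $\mathcal{PL}(G)$ under joins are both used, together with verifying that the resulting pair is a genuine transversal pair (both members non-discrete and distinct).
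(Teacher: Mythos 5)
Your proof is correct and follows essentially the same route as the paper's: the paper also extends a transversal $\sigma$ to a maximal paratopological group topology (phrased as ``without loss of generality, $\sigma$ is maximal'') to get $\mathcal{PM}_{G}\leq\sigma$ and hence $\tau\nleqslant\mathcal{PM}_{G}$, and for the converse picks a maximal $\sigma$ with $\tau\nleqslant\sigma$ so that $\tau\vee\sigma$ is discrete. You merely spell out the details the paper leaves implicit, such as why $\tau\nleqslant\mathcal{PM}_{G}$ yields a maximal $\mu$ with $\tau\nleqslant\mu$ and why both members of the resulting pair are non-discrete.
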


\begin{proof}
Assume that $\tau\in$ {\bf PTrans}. Then there exists a paratopological group topology $\sigma$ such that $\tau\vee \sigma$ is discrete. Without loss of generality, we may assume that $\sigma$ is a maximal paratopological group topology on $G$. Then $\mathcal{PM}_{G}\leq\sigma$, thus $\tau\nleqslant \mathcal{PM}_{G}$.

Suppose that $\tau\nleqslant \mathcal{PM}_{G}$. Then there exists a maximal paratopological group topology $\sigma$ on $G$ such that $\tau\nleqslant\sigma$.
Since $\sigma$ is maximal, $\tau\vee \sigma$ is discrete, that is, $\tau\in$ {\bf PTrans}.
\end{proof}

The following corollary implies that $\mathcal{PM}_{G}=\mathcal{M}_{G}$ if and only if $\mathcal{M}_{G}\not\in${\bf PTrans}.

\begin{corollary}
Let $G$ be a group. Then $\mathcal{M}_{G}\in${\bf PTrans} if and only if $\mathcal{PM}_{G}\lvertneqq\mathcal{M}_{G}$.
\end{corollary}

Next we give some sufficient conditions for a paratopological group such that is has a transversal paratopological group topology.

We say that a homomorphism $f: G\rightarrow H$ of abstract groups {\it $\mathcal{PM}$-continuous} (resp., {\it $\mathcal{PM}$-open}) if
$f$ becomes continuous (resp., open) when $G$ and $H$ are equipped with their submaximal
paratopological group topologies.

\begin{proposition}\label{p04}
Let $G$ be a subgroup of $H$ with $G\cdot c_{H}(G)=H$. If $\tau$ is a paratopological group topology on $G$, then one can extend $\tau$ from $G$ to a paratopological group $\sigma$ on $H$ such that $G\in\sigma$. In particular, if $\tau$ is a maximal paratopological group topology on $G$, then $\sigma$ is also a maximal paratopological group topology on $H$.
\end{proposition}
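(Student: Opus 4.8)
The plan is to build $\sigma$ by prescribing its neighbourhood filter at the identity $e$ and then checking the standard Pontryagin-type conditions for a paratopological group topology. Concretely, I would declare the family $\tau(e)$ of $\tau$-neighbourhoods of $e$ in $G$ to be a base of neighbourhoods of $e$ in $H$, and let $\sigma$ be the topology whose open sets are the unions of left translates $hU$ with $h\in H$ and $U\in\tau(e)$. Since every $U\in\tau(e)$ is contained in $G$, the set $G$ itself is a $\sigma$-neighbourhood of $e$, so $G\in\sigma$; translating by elements of $G$ shows $G$ is open, and translating by elements outside $G$ (whose translates $hU$ miss $G$) shows that the subspace topology $\sigma\uhr G$ coincides with $\tau$, so $\sigma$ extends $\tau$.

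To see that $\sigma$ is a paratopological group topology I would verify that $\tau(e)$ satisfies: (i) $e\in U$ for all $U$; (ii) closure under finite intersections up to refinement; (iii) for each $U$ some $V$ with $VV\subseteq U$; and (iv) for each $U$ and each $h\in H$ some $V$ with $hVh^{-1}\subseteq U$. Conditions (i)--(iii) are immediate from the fact that $\tau$ is already a paratopological group topology on $G$ and that $\tau(e)$ is a $\tau$-neighbourhood base at $e$. Condition (iv), invariance under the inner automorphisms of $H$, is the one genuine point, and it is exactly where the hypothesis $G\cdot c_{H}(G)=H$ enters.

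For (iv), write a given $h\in H$ as $h=gz$ with $g\in G$ and $z\in c_{H}(G)$. For any $V\in\tau(e)$ we have $V\subseteq G$, so $z$ commutes with every element of $V$ and hence $zVz^{-1}=V$; consequently $hVh^{-1}=g(zVz^{-1})g^{-1}=gVg^{-1}$. Since $\tau$ is a paratopological group topology on $G$, conjugation by $g$ is $\tau$-continuous at $e$, so there is $V\in\tau(e)$ with $gVg^{-1}\subseteq U$, whence $hVh^{-1}\subseteq U$. Thus (iv) holds and $\sigma$ is a paratopological group topology on $H$ with $G\in\sigma$. I expect this reduction of conjugation in $H$ to conjugation inside $G$, made possible by the centralizer decomposition, to be the crux of the argument.

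Finally, assuming $\tau$ is maximal on $G$, I would prove $\sigma$ is maximal on $H$ by a restriction argument. Since $\tau$ is non-discrete, so is $\sigma$. Let $\sigma'$ be any paratopological group topology on $H$ with $\sigma\subseteq\sigma'$. As $G$ is $\sigma$-open it is $\sigma'$-open, so the restriction $\tau'=\sigma'\uhr G$ is a paratopological group topology on $G$ finer than $\tau$; maximality of $\tau$ forces either $\tau'=\tau$ or $\tau'=\delta_{G}$. If $\tau'=\tau$, then because $G$ is $\sigma'$-open the $\sigma'$-neighbourhoods of $e$ contained in $G$ form a base and coincide with $\tau(e)$, so $\sigma'$ and $\sigma$ share the same neighbourhood filter at $e$ and hence $\sigma'=\sigma$. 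If $\tau'=\delta_{G}$, then $\{e\}$ is $\tau'$-open, hence $\sigma'$-open (again using that $G$ is $\sigma'$-open), so $\sigma'=\delta_{H}$. Therefore no non-discrete paratopological group topology strictly refines $\sigma$, and $\sigma$ is maximal on $H$.
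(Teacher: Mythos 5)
Your proposal is correct and matches the paper's proof in its essentials: both declare $\tau(e)$ to be a neighbourhood base at $e$ in $H$ and isolate the conjugation axiom as the only delicate point, settling it by writing $h=gc$ with $g\in G$, $c\in c_{H}(G)$ so that conjugation by $h$ reduces to conjugation by $g$ inside $G$ (the centralizer acting trivially on subsets of $G$). Your closing restriction argument for maximality --- using that $G$ is $\sigma'$-open to split into the cases $\sigma'\uhr G=\tau$ and $\sigma'\uhr G=\delta_{G}$ --- is a correct and welcome elaboration of the step the paper dismisses with ``it easily see that $\sigma$ is also a maximal paratopological group topology on $H$.''
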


\begin{proof}
Indeed, let $\tau(e)$ be the neighborhood base at $e$ in $G$. It suffices to prove that $\tau(e)$ is a neighborhood base at $e$ in $H$. Only one of the axioms of a base of a paratopological group topology need a special care. Take an any open neighborhood $U\in\tau(e)$ and $h\in H$. We need to find a neighborhood $W\in\tau(e)$ such that $h^{-1}Wh\subset U$. Since $G\cdot c_{H}(G)=H$, there exist $g\in G$ and $c\in c_{H}(G)$ such that $h=gc$. Since $\tau$ is a paratopological group topology on $G$, there exists a $W\in\tau(e)$ such that $g^{-1}Wg\subset U$. Then it follows from $c\in c_{H}(G)$ that $h^{-1}Wh=c^{-1}g^{-1}Wgc\subset c^{-1}Uc=U$.

Moreover, if $\tau$ is a maximal paratopological group topology on $G$, then it easily see that $\sigma$ is also a maximal paratopological group topology on $H$ since $\tau$ is a maximal and $G\in\sigma$.
\end{proof}

\begin{theorem}\label{t02}
Every homomorphism $f: G\rightarrow H$ of abstract groups is $\mathcal{PM}$-continuous
whenever $f(G)\cdot c_{H}(f(G))=H$.
\end{theorem}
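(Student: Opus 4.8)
The plan is to reduce the continuity of $f$ to a comparison of paratopological group topologies and then to feed Proposition~\ref{p04} with the pushforward of each maximal topology on $G$. Since left translations are homeomorphisms in a paratopological group and $f$ is a homomorphism, $f\colon (G,\mathcal{PM}_{G})\to(H,\mathcal{PM}_{H})$ is continuous if and only if it is continuous at the identity, that is, if and only if the initial topology $f^{*}(\mathcal{PM}_{H})$ on $G$ (the coarsest paratopological group topology making $f$ continuous) satisfies $f^{*}(\mathcal{PM}_{H})\leq\mathcal{PM}_{G}$. Because $\mathcal{PM}_{G}$ is by definition the infimum of all maximal paratopological group topologies $\nu$ on $G$, and $f^{*}(\mathcal{PM}_{H})$ is itself a paratopological group topology, it suffices to prove $f^{*}(\mathcal{PM}_{H})\leq\nu$ for every such $\nu$.

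So I would fix a maximal paratopological group topology $\nu$ on $G$ and write $K=f(G)$, factoring $f$ as $G\xrightarrow{\pi}K\xrightarrow{\iota}H$, where $\pi$ is the canonical quotient onto $K\cong G/\ker f$ and $\iota$ is the inclusion. First push $\nu$ forward to the quotient topology $\tau_{\nu}=\pi_{*}(\nu)$ on $K$; this is again a paratopological group topology and, by construction, $\pi\colon(G,\nu)\to(K,\tau_{\nu})$ is continuous, i.e.\ $\pi^{*}(\tau_{\nu})\leq\nu$. The crucial point is the \emph{Key Lemma}: the quotient $\tau_{\nu}$ of a maximal paratopological group topology is again maximal on $K$. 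Granting this, Proposition~\ref{p04} applies, since its hypothesis $K\cdot c_{H}(K)=f(G)\cdot c_{H}(f(G))=H$ is exactly our assumption, and extends $\tau_{\nu}$ to a maximal paratopological group topology $\sigma_{\nu}$ on $H$ in which $K$ is open and $\sigma_{\nu}\uhr K=\tau_{\nu}$.

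The comparison then closes the argument. Since $\sigma_{\nu}$ is maximal, $\mathcal{PM}_{H}\leq\sigma_{\nu}$, and because $f=\iota\circ\pi$ we obtain
\[
f^{*}(\mathcal{PM}_{H})\leq f^{*}(\sigma_{\nu})=\pi^{*}\big(\sigma_{\nu}\uhr K\big)=\pi^{*}(\tau_{\nu})\leq\nu .
\]
Letting $\nu$ range over all maximal paratopological group topologies on $G$ and taking the infimum yields $f^{*}(\mathcal{PM}_{H})\leq\mathcal{PM}_{G}$, which is precisely the $\mathcal{PM}$-continuity of $f$.

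The hard part is the Key Lemma, that a quotient of a maximal paratopological group topology stays maximal; everything else is bookkeeping about initial and final topologies together with a single invocation of Proposition~\ref{p04}. To prove it I would suppose $\lambda>\tau_{\nu}$ is a strictly finer, non-discrete paratopological group topology on $K$ and test the supremum $\nu\vee\pi^{*}(\lambda)$ on $G$ against the maximality of $\nu$. If this supremum equals $\nu$, then $\pi^{*}(\lambda)\leq\nu$ forces $\lambda\leq\tau_{\nu}$, a contradiction. The delicate case is when the supremum is discrete, which occurs precisely when $\ker f$ is $\nu$-discrete; there $\pi$ is locally injective and open, hence a local homeomorphism near the identity, so a strict refinement of $\tau_{\nu}$ transports to a strict refinement of $\nu$ and again contradicts maximality (a refinement forced to be discrete would isolate the identity of $K$ and make $\lambda$ discrete). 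I expect this discrete-kernel case to be the only genuinely subtle step.
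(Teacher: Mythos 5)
Your overall architecture coincides with the paper's: fix a maximal paratopological group topology $\nu$ on $G$, factor $f$ through $K=f(G)$, pass to the quotient topology, and extend to $H$ via Proposition~\ref{p04}, whose hypothesis is exactly $f(G)\cdot c_{H}(f(G))=H$. But your Key Lemma is false as stated, and this is a genuine gap. If $N=\ker f$ is $\nu$-open --- and this really happens for maximal topologies: take $G=\mathbb{Z}\times\mathbb{Z}$, put a maximal paratopological group topology on the central subgroup $N=\mathbb{Z}\times\{0\}$, and extend it by the ``in particular'' clause of Proposition~\ref{p04} to a maximal topology on $G$ in which $N$ is open; the projection onto the second factor then has open kernel --- the quotient topology $\tau_{\nu}$ equals $\delta_{K}$, which is by definition not a maximal element of the poset of paratopological group topologies minus $\{\delta_{K}\}$. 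Your proof of the lemma never meets this case (there is no strictly finer $\lambda$ to test against), and your closing inference ``since $\sigma_{\nu}$ is maximal, $\mathcal{PM}_{H}\leq\sigma_{\nu}$'' is then unjustified: $\sigma_{\nu}$ is discrete, not maximal (the inequality survives only vacuously because everything lies below $\delta_{H}$). The paper's proof is organized around precisely this trichotomy on $N$: if $N$ is neither $\tau$-open nor $\tau$-discrete, declaring $N$ open yields a strictly finer non-discrete topology, contradicting maximality; if $N$ is open, continuity is trivial since every $f$-preimage is a union of open cosets of $N$; only for $N$ discrete does it prove the quotient topology $\upsilon$ maximal. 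Your lemma must be weakened to ``the quotient of a maximal topology is either discrete or maximal,'' with the discrete branch routed through the trivial open-coset argument.

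With that repair, your mechanism in the remaining case is not only correct but genuinely different from, and cleaner than, the paper's. Faced with a strict refinement $\theta>\upsilon$, the paper transports it upward through the local homeomorphism via the conjugation-saturated filter base $\mathcal{A}=\{f^{-1}(V)\cap U^{a}: V\in\theta(e),\ V\subset f(U^{a}),\ U^{2}\cap N=\{e\},\ U\in\tau(e)\}$, and verifying that $\mathcal{A}$ is a base of a non-discrete paratopological group topology is exactly the delicate step you feared. Your supremum $\eta=\nu\vee\pi^{*}(\lambda)$ sidesteps all of it: $\eta$ is automatically a paratopological group topology, and it is strictly finer than $\nu$ (otherwise $\pi^{*}(\lambda)\leq\nu$ forces $\lambda\leq\tau_{\nu}$, as you note), so maximality makes it discrete, i.e.\ $U\cap\pi^{-1}(V)=\{e\}$ for some $U\in\nu(e)$ and $V\in\lambda(e_{K})$. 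But then $V\cap\pi(U)=\{e_{K}\}$, and since $\pi$ is an open map and $\lambda\geq\tau_{\nu}$, both $V$ and $\pi(U)$ are $\lambda$-neighborhoods of $e_{K}$, so $\lambda$ is discrete --- no local homeomorphism, no conjugates, and not even the $\nu$-discreteness of $N$ is needed (it falls out of $U\cap N=\{e\}$ but is never used; your ``precisely when $\ker f$ is $\nu$-discrete'' is also off in one direction for the same reason). So the step you flagged as the only genuinely subtle one closes in two lines; the actual defect in your write-up is the unexamined open-kernel case.
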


\begin{proof}
It suffices to prove that $f: (G, \tau)\rightarrow (H, \mathcal{PM}_{H})$ is continuous for every maximal paratopological group topology $\tau$ on $G$. By Proposition~\ref{p04}, we may assume that $f$ is surjective. Let $N=\mbox{ker}f$. Obviously, $N$ is normal. If $N$ is neither $\tau$-open nor $\tau$-discrete, then it is obvious that one can get a finer non-discrete paratopological group topology on $G$ by taking $N$ open, which is a contradiction with the maximality of $\tau$ on $G$. If $N$ is open in $G$, then $f^{-1}(U)N$ is open for each open subset $U$ in $(H, \mathcal{PM}_{H})$. Now suppose that $N$ is discrete in $G$. Let $\upsilon$ denote the quotient topology on $H$ make $f$ is quotient. Then $f: (G, \tau)\rightarrow (H, \upsilon)$ is local homeomorphism. Moreover, for each open neighborhood $U$ of $e$ in $(G, \tau)$ with $U^{2}\cap N=\{e\}$, $f\upharpoonright_{U}: U\rightarrow f(U)$ is a homeomorphism. Further, this remains valid when $U$ is replaced by any conjugate $U^{a}=a^{-1}Ua$ for $U\in\tau, a\in G$. We claim that $\upsilon$ is maximal. Indeed, suppose not, there exists a paratopological group topology $\theta$ on $H$ such that $\theta$ is strictly finer than $\upsilon$.
Let $$\mathcal{A}=\{f^{-1}(V)\cap U^{a}: V\in \theta(e), V\subset f(U^{a}), U^{2}\cap N=\{e\}, U\in\tau(e)\}.$$
Then $\mathcal{A}$ forms a filter base at $e$ of $G$ which determines a non-discrete paratopological group topology $\eta$ on $G$ with $\eta>\tau$, which is a contradiction. Therefore, $\upsilon$ is maximal, thus $\mathcal{PM}_{H}\leq\upsilon$, which implies that $f$ is $\mathcal{PM}$-continuous.
\end{proof}

\begin{corollary}
Every homomorphism $f: G\rightarrow H$ of abstract Abelian groups is $\mathcal{PM}$-continuous.
\end{corollary}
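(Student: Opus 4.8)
The plan is to deduce this directly from Theorem~\ref{t02} by checking that its hypothesis becomes automatic once the target group is Abelian. Recall that $c_{H}(f(G))$ denotes the centralizer of $f(G)$ in $H$, that is, the maximal subgroup of $H$ whose elements commute with every element of $f(G)$. When $H$ is Abelian every element of $H$ commutes with every other element, so in particular every element of $H$ commutes with each element of $f(G)$; hence $c_{H}(f(G))=H$. Consequently $f(G)\cdot c_{H}(f(G))=f(G)\cdot H=H$, so the condition $f(G)\cdot c_{H}(f(G))=H$ required by Theorem~\ref{t02} is satisfied trivially, and applying that theorem to $f: G\rightarrow H$ immediately gives that $f$ is $\mathcal{PM}$-continuous.

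There is no genuine obstacle in this argument: the Abelian hypothesis collapses the centralizer condition to a triviality, so the corollary is a one-line specialization of Theorem~\ref{t02}. The only remark worth recording is that the proof uses only that the \emph{codomain} $H$ is Abelian, since that alone forces $c_{H}(f(G))=H$; the assumption that $G$ is Abelian is not needed and is subsumed by the (stronger) hypotheses of the stated corollary. I would therefore write the proof as a single sentence invoking Theorem~\ref{t02}, after noting that $c_{H}(f(G))=H$ whenever $H$ is Abelian.
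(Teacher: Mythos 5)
Your proof is correct and is exactly the intended derivation: since $H$ is Abelian, $c_{H}(f(G))=H$, so the hypothesis $f(G)\cdot c_{H}(f(G))=H$ of Theorem~\ref{t02} holds trivially and the corollary follows at once. Your remark that only the Abelianness of the codomain $H$ is actually used is also accurate and a worthwhile observation.
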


\begin{theorem}\label{t03}
Let $(H, \tau)$ be a non-discrete paratopological group and let $G$ be an infinite subgroup
of $H$ with $G\cdot c_{H}(G)=H$. Then $H$ admits a transversal paratopological group topology in each of the
following cases:

(1) $G$ admits a transversal paratopological group topology.

(2) $G$ is discrete and non-minimal (i.e., admits a non-discrete paratopological group topology).
\end{theorem}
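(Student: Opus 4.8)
The plan is to derive both cases as applications of the transversality criterion, Theorem~\ref{tttt1}, reading the conclusion ``$H$ admits a transversal paratopological group topology'' as the assertion that the given topology has a transversal partner, i.e. $(H,\tau)\in$ {\bf PTrans}. Since $(H,\tau)$ is non-discrete, Theorem~\ref{tttt1} reduces this to a single inequality about the submaximal topology: $(H,\tau)\in$ {\bf PTrans} exactly when $\tau\nleqslant\mathcal{PM}_{H}$. So the entire proof collapses to showing $\tau\nleqslant\mathcal{PM}_{H}$, and I would argue this by contradiction, assuming $\tau\leq\mathcal{PM}_{H}$ and pushing everything down to the subgroup $G$.

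The structural input that makes this work is Theorem~\ref{t02}, applied to the inclusion $\iota\colon G\hookrightarrow H$. This is legitimate because $\iota(G)\cdot c_{H}(\iota(G))=G\cdot c_{H}(G)=H$, so $\iota$ is $\mathcal{PM}$-continuous; that is, $\iota\colon(G,\mathcal{PM}_{G})\to(H,\mathcal{PM}_{H})$ is continuous. Reading off continuity, $U\cap G\in\mathcal{PM}_{G}$ for every $U\in\mathcal{PM}_{H}$, so the trace satisfies $\mathcal{PM}_{H}\uhr G\leq\mathcal{PM}_{G}$. Hence, if $\tau\leq\mathcal{PM}_{H}$, then passing to traces on $G$ gives $\tau\uhr G\leq\mathcal{PM}_{H}\uhr G\leq\mathcal{PM}_{G}$. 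Thus it suffices to contradict this, i.e. to establish $\tau\uhr G\nleqslant\mathcal{PM}_{G}$ in each of the two cases.

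For case (1) I read the hypothesis as: the subgroup $G$ equipped with its induced topology $\tau\uhr G$ admits a transversal partner, so $(G,\tau\uhr G)\in$ {\bf PTrans}; applying Theorem~\ref{tttt1} on $G$ yields exactly $\tau\uhr G\nleqslant\mathcal{PM}_{G}$, the desired contradiction. For case (2), $G$ is discrete in $(H,\tau)$, so $\tau\uhr G$ is the discrete topology $\delta_{G}$; since $G$ is non-minimal it carries some non-discrete paratopological group topology, which by the Zorn-lemma argument quoted in Section~4 lies inside a maximal one $\rho\neq\delta_{G}$. Then $\mathcal{PM}_{G}\leq\rho$, and a discrete $\mathcal{PM}_{G}$ would force $\rho=\delta_{G}$; hence $\mathcal{PM}_{G}\neq\delta_{G}$, so $\delta_{G}\nleqslant\mathcal{PM}_{G}$, again contradicting $\tau\uhr G\leq\mathcal{PM}_{G}$. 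In both cases $\tau\nleqslant\mathcal{PM}_{H}$, so $(H,\tau)\in$ {\bf PTrans}.

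I would also record the concrete construction underlying this, which uses only Proposition~\ref{p04} and makes the role of the centrality hypothesis visible. In either case one can pick a non-discrete paratopological group topology $\rho$ on $G$ together with $U\in(\tau\uhr G)(e)$ and $W\in\rho(e)$ satisfying $U\cap W=\{e\}$ (transversality of $\tau\uhr G$ and $\rho$ in case (1); in case (2) simply $U=\{e\}$ and $W=G$). Extending $\rho$ to a topology $\sigma$ on $H$ with $G\in\sigma$ via Proposition~\ref{p04}, one chooses $V\in\tau(e)$ with $V\cap G=U$ and notes that $V\cap W=U\cap W=\{e\}$ because $W\subseteq G$; thus $\{e\}$ is $(\tau\vee\sigma)$-open and $\tau\vee\sigma$ is discrete, while $\sigma$ is non-discrete since $\sigma\uhr G=\rho$. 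The main obstacle, and the precise point where $G\cdot c_{H}(G)=H$ is essential, is the extension step: Proposition~\ref{p04} is what guarantees that a topology given only on $G$ extends to $H$ keeping $G$ open, so that a neighborhood $W\subseteq G$ survives as a neighborhood in $H$ and can be met against a $\tau$-neighborhood to isolate $e$.
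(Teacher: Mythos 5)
Your proposal is correct and takes essentially the same route as the paper: assume no transversal partner exists, get $\tau\leq\mathcal{PM}_{H}$ from Theorem~\ref{tttt1}, restrict to $G$ and use the $\mathcal{PM}$-continuity of the inclusion (Theorem~\ref{t02}, legitimate since $G\cdot c_{H}(G)=H$) to obtain $\tau\uhr_{G}\leq\mathcal{PM}_{H}\uhr_{G}\leq\mathcal{PM}_{G}$, then contradict this via Theorem~\ref{tttt1} in case (1) and via non-minimality in case (2) --- your explicit argument that a non-discrete topology on $G$ sits below a maximal one $\rho\neq\delta_{G}$, so $\mathcal{PM}_{G}\neq\delta_{G}$, is just a cleaner rendering of the paper's terse ``$G$ is a Markov group, hence discrete and minimal'' step. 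Your final paragraph giving a direct construction of the transversal topology through Proposition~\ref{p04} is a correct supplement not present in the paper, but it does not change the fact that the main argument coincides with the paper's proof.
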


\begin{proof}
Suppose that $H$ does not admit a transversal paratopological group topology. Then $\tau\leq\mathcal{PM}_{H}$ by Theorem~\ref{tttt1}, hence $\tau\upharpoonright_{G}\leq\mathcal{PM}_{H}\upharpoonright_{G}$. Then it follows from Theorem~\ref{t02} that $\mathcal{PM}_{H}\upharpoonright_{G}\leq \mathcal{PM}_{G}$. Therefore, $\tau\upharpoonright_{G}\leq\mathcal{PM}_{G}$, which shows that $G$ must be discrete by Theorem~\ref{tttt1} and the assumption of our theorem. However, since $\tau\upharpoonright_{G}\leq\mathcal{PM}_{G}$, we conclude that $G$ is a Markov group, hence $G$ is discrete and
minimal, which is a contradiction.
\end{proof}

It is well known that every infinite Abelian group admits a non-discrete group topology, hence it follows from Theorem~\ref{t03} that we have the following corollary.

\begin{corollary}
Suppose that a non-discrete paratopological group $H$ contains an infinite central
subgroup $G$ which is either discrete or admits a transversal paratopological group topology. Then $H$
admits a transversal paratopological group topology.
\end{corollary}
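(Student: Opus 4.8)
The plan is to derive the corollary directly from Theorem~\ref{t03}, so the real work lies in checking that the hypotheses of that theorem are met in both cases. The crucial observation is that a central subgroup automatically satisfies the algebraic condition $G\cdot c_{H}(G)=H$: if $G$ is central in $H$, then every element of $H$ commutes with every element of $G$, so $c_{H}(G)=H$ and hence $G\cdot c_{H}(G)=G\cdot H=H$. Thus Theorem~\ref{t03} is applicable to the pair $(H,G)$ regardless of which of the two listed cases we are in.

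First I would dispose of the case where $G$ admits a transversal paratopological group topology. This is precisely hypothesis (1) of Theorem~\ref{t03}, so that theorem immediately yields a transversal paratopological group topology on $H$, and there is nothing further to do.

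The remaining case, where $G$ is discrete, requires one extra step, since Theorem~\ref{t03}(2) demands that $G$ be not only discrete but also non-minimal. Here I would exploit the fact that a central subgroup is necessarily Abelian: its elements commute with every element of $H$, and in particular with one another. Since $G$ is infinite and Abelian, the well-known result recalled just before this corollary, that every infinite Abelian group carries a non-discrete group topology, shows that $G$ admits a non-discrete (hence non-discrete paratopological) group topology; that is, $G$ is non-minimal. Therefore $G$ is discrete and non-minimal, and Theorem~\ref{t03}(2) once more produces the desired transversal topology on $H$.

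The argument is essentially a bookkeeping reduction, and I expect no genuine obstacle. The only point that needs care is recognizing that centrality delivers two things at once, namely the condition $G\cdot c_{H}(G)=H$ and the Abelianness of $G$, the latter being exactly what upgrades ``discrete'' to the stronger ``discrete and non-minimal'' hypothesis required by Theorem~\ref{t03}(2). Beyond correctly invoking the background fact on non-discrete topologies of infinite Abelian groups, the proof is a short case analysis.
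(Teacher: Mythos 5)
Your proof is correct and is essentially identical to the paper's own argument: the paper likewise derives the corollary from Theorem~\ref{t03}, using the facts that centrality gives $G\cdot c_{H}(G)=H$ and that an infinite central (hence Abelian) subgroup is non-minimal because every infinite Abelian group admits a non-discrete group topology. There is nothing to add.
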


By a similar proof of \cite[Corollary 3.7]{DTY2005}, we also have the following corollary.

\begin{corollary}
Endowed with the Sorgenfrey topologies, the additive groups $\mathbb{Q}, \mathbb{Q}_{p}$ (for $p\in\mathbb{P}$), $\mathbb{R}$, $\mathbb{C}$, the multiplicative groups $\mathbb{R}^{\ast}=\mathbb{R}\setminus\{0\}$, $\mathbb{C}^{\ast}=\mathbb{C}\setminus\{0\}$ and the groups $GL(n,\mathbb{R})$, $GL(n,\mathbb{C})$ admit  transversal paratopological group topologies respectively.
\end{corollary}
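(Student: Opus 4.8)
The plan is to reduce every case to Theorem~\ref{t03}, or more directly to the Corollary stated immediately after it: a non-discrete paratopological group $H$ admits a transversal paratopological group topology whenever it contains an infinite central subgroup $G$ that is discrete. The key structural observation is that for a \emph{central} subgroup $G$ one has $c_{H}(G)=H$, so the hypothesis $G\cdot c_{H}(G)=H$ of Theorem~\ref{t03} holds automatically; moreover an infinite discrete Abelian group is non-minimal (it carries the usual non-discrete topologies), so case~(2) of Theorem~\ref{t03} applies. Since each group $H$ in the list, endowed with its Sorgenfrey topology, is a non-discrete paratopological group, it therefore suffices in every case to exhibit an \emph{infinite central discrete subgroup} $G$ of $H$.

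For the Abelian additive groups $\mathbb{Q}$, $\mathbb{Q}_{p}$, $\mathbb{R}$ and $\mathbb{C}$ every subgroup is central, so I would take an infinite cyclic subgroup $G=\langle a\rangle$ for a suitable $a$ (for instance $a=1$ in $\mathbb{Q}$ and $\mathbb{R}$, and a copy of $\mathbb{Z}$ realized along a Sorgenfrey coordinate in $\mathbb{C}$ and in $\mathbb{Q}_{p}$). The one-sided Sorgenfrey basic sets $[na,(n+1)a)$ isolate each point $na$ inside $G$, so $G$ is discrete; being infinite and central it feeds directly into the Corollary.

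For the multiplicative Abelian groups $\mathbb{R}^{\ast}$ and $\mathbb{C}^{\ast}$ I would take the infinite cyclic subgroup $G=\{2^{n}:n\in\mathbb{Z}\}$: its only possible accumulation point is $0$, which does not belong to the group, so within the group the Sorgenfrey intervals separate the points $2^{n}$ and $G$ is an infinite discrete central subgroup. For the matrix groups $GL(n,\mathbb{R})$ and $GL(n,\mathbb{C})$, which are non-Abelian when $n\geq 2$, I would instead pass to the center, namely the group of scalar matrices $\{\lambda I:\lambda\neq 0\}\cong \mathbb{R}^{\ast}$ (resp.\ $\cong \mathbb{C}^{\ast}$), and inside it use the infinite discrete central subgroup $G=\{2^{k}I:k\in\mathbb{Z}\}$, exactly as in the multiplicative case. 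Since $G$ is central we again have $c_{H}(G)=H$, so Theorem~\ref{t03} applies.

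The routine but essential step, and the one I expect to require the most care, is verifying discreteness of the chosen subgroup in each specific Sorgenfrey topology, especially for $\mathbb{Q}_{p}$ and the matrix groups, where the precise form of the Sorgenfrey basic neighborhoods must be pinned down so as to confirm that the selected cyclic subgroup is sparse enough to be isolated. In every case the mechanism is the same: the one-sided Sorgenfrey basic sets isolate each point of a suitably chosen cyclic subgroup, and the only candidate accumulation points are excluded from the group (for example $0$ in the multiplicative examples). Once discreteness is established, Theorem~\ref{t03} (via its succeeding Corollary) yields a transversal paratopological group topology with no further work, exactly paralleling the argument of \cite[Corollary 3.7]{DTY2005}.
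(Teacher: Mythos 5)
Your strategy coincides with the paper's: the paper proves this corollary with a one-line appeal to ``a similar proof of \cite[Corollary 3.7]{DTY2005}'', which is exactly your reduction --- feed an infinite \emph{central} subgroup into Theorem~\ref{t03} (or the corollary following it), noting that centrality gives $c_{H}(G)=H$, hence $G\cdot c_{H}(G)=H$, and that an infinite Abelian group is automatically non-minimal. For $\mathbb{Q}$, $\mathbb{R}$, $\mathbb{C}$, $\mathbb{R}^{\ast}$, $\mathbb{C}^{\ast}$, $GL(n,\mathbb{R})$ and $GL(n,\mathbb{C})$ your witnesses are correct and are the intended ones: the Sorgenfrey(-product) topologies refine the Euclidean ones, so the subgroups $\mathbb{Z}$, $\{2^{n}:n\in\mathbb{Z}\}$ and $\{2^{k}I:k\in\mathbb{Z}\}$, being Euclidean-discrete (the only accumulation point $0$ lies outside the multiplicative groups), remain discrete, infinite and central. (Incidentally, for $\mathbb{R}^{\ast}$ and the matrix groups one should worry about what ``the Sorgenfrey topology'' even means, since multiplication is not jointly continuous on all of $\mathbb{R}^{\ast}$ for the plain Sorgenfrey topology --- e.g.\ at pairs of negative numbers --- but that is a defect of the statement itself, not of your argument.)

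The genuine gap is the case $\mathbb{Q}_{p}$. There is no ``Sorgenfrey coordinate'' in $\mathbb{Q}_{p}$: the field $\mathbb{Q}_{p}$ carries no compatible linear order, and in the usual $p$-adic topology $\mathbb{Z}$ is \emph{not} discrete, since $p^{k}\to 0$. Worse, $\mathbb{Q}_{p}$ with its usual topology has no infinite discrete subgroup at all: if $D$ is a discrete subgroup, then $D\cap\mathbb{Z}_{p}$ is a discrete subgroup of a compact group, hence finite, hence trivial because $\mathbb{Q}_{p}$ is torsion-free; but then $D$ embeds into $\mathbb{Q}_{p}/\mathbb{Z}_{p}\cong\mathbb{Z}(p^{\infty})$, a torsion group, forcing $D=\{0\}$. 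So the discreteness you defer to a ``routine but essential'' verification cannot be obtained in the usual topology, and it could only come from the finer Sorgenfrey-type refinement of $\mathbb{Q}_{p}$ --- which you never define, and for which you exhibit no subgroup that it isolates. To close this case you must either pin down the intended Sorgenfrey-like paratopological refinement of $\mathbb{Q}_{p}$ and verify discreteness of a concrete subgroup in it, or abandon case (2) for $\mathbb{Q}_{p}$ and route it through case (1) of Theorem~\ref{t03}, producing a subgroup whose \emph{induced} topology already admits a transversal paratopological group topology; case (2) is unavailable there in principle, for the same reason the $p$-adic case cannot be handled by a discrete subgroup even in the topological-group setting of \cite{DTY2005}. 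As written, your proof establishes every listed case except $\mathbb{Q}_{p}$.
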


Next we give three parallel results about the productivity properties of the submaximal paratopological group topologies and their impact
on the transversability. The reader can see the similar proofs in \cite[Subsection 3.3]{DTY2005}.

\begin{proposition}
For an arbitrary family $\{G_{i}\}i\in I$ of groups and $G=\prod_{i\in I}G_{i}$, one has $\mathcal{PM}_{G}\geq\prod_{i\in I}\mathcal{PM}_{G_{i}}$. If the index set $I$ is finite, this inequality becomes an equality.
\end{proposition}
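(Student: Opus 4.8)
The plan is to treat the two assertions separately, using the canonical projections for the inequality and the extension machinery of Proposition~\ref{p04} for the finite case.

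For the inequality $\mathcal{PM}_{G}\geq\prod_{i\in I}\mathcal{PM}_{G_{i}}$ I would argue through the projections. For each $i\in I$ the projection $\pi_{i}\colon G\rightarrow G_{i}$ is a surjective homomorphism, so $\pi_{i}(G)\cdot c_{G_{i}}(\pi_{i}(G))=G_{i}\cdot c_{G_{i}}(G_{i})=G_{i}$; hence Theorem~\ref{t02} applies and $\pi_{i}$ is $\mathcal{PM}$-continuous, i.e. $\pi_{i}\colon (G,\mathcal{PM}_{G})\rightarrow(G_{i},\mathcal{PM}_{G_{i}})$ is continuous. Since $\prod_{i\in I}\mathcal{PM}_{G_{i}}$ is by definition the coarsest topology on $G$ for which every $\pi_{i}$ is continuous into $(G_{i},\mathcal{PM}_{G_{i}})$, the continuity of all the $\pi_{i}$ with respect to $\mathcal{PM}_{G}$ forces $\mathcal{PM}_{G}\geq\prod_{i\in I}\mathcal{PM}_{G_{i}}$. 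This step is essentially formal and works for an arbitrary index set.

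For the reverse inequality when $I$ is finite, say $I=\{1,\dots,n\}$, I would manufacture enough maximal paratopological group topologies on $G$ whose meet sits below $\prod_{i}\mathcal{PM}_{G_{i}}$. Fix $i$ and view $G_{i}$ as the subgroup $\{e\}\times\cdots\times G_{i}\times\cdots\times\{e\}$ of $G$; a direct computation gives $c_{G}(G_{i})=\bigl(\prod_{j\neq i}G_{j}\bigr)\times Z(G_{i})$, so that $G_{i}\cdot c_{G}(G_{i})=G$. Thus Proposition~\ref{p04} lets me extend every maximal paratopological group topology $\sigma$ on $G_{i}$ to a maximal paratopological group topology on $G$ in which $G_{i}$ is open; concretely this extension is the topology $A_{i}^{\sigma}$ carrying $\sigma$ on the $i$-th coordinate and the discrete topology on all the others. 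Taking the meet over all maximal $\sigma$ on $G_{i}$, and using that only one coordinate varies, so that a common neighbourhood of $e$ has its $i$-th slice lying in every $\sigma$ and hence in $\mathcal{PM}_{G_{i}}$, I obtain $\bigwedge_{\sigma}A_{i}^{\sigma}=A_{i}$, where $A_{i}$ carries $\mathcal{PM}_{G_{i}}$ on coordinate $i$ and is discrete elsewhere. Since each $A_{i}^{\sigma}$ is maximal and $\mathcal{PM}_{G}$ is the meet of all maximal topologies, this yields $\mathcal{PM}_{G}\leq A_{i}$ for every $i$.

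It then remains to identify $\bigwedge_{i=1}^{n}A_{i}$ with $\Pi:=\prod_{i=1}^{n}\mathcal{PM}_{G_{i}}$ in the lattice $\mathcal{PL}(G)$, and this is the step I expect to require the most care, precisely because the lattice meet need not coincide with the set-theoretic intersection of the topologies. The relation $\Pi\leq A_{i}$ is clear since $A_{i}$ is finer than $\Pi$ coordinatewise. For the opposite bound I would take any paratopological group topology $C$ with $C\leq A_{i}$ for all $i$ and a $C$-neighbourhood $W$ of $e$, choose a $C$-neighbourhood $W'$ of $e$ with $(W')^{n}\subseteq W$ by iterating joint continuity of the multiplication, and observe that $W'$ is simultaneously a neighbourhood of $e$ in each $A_{i}$; hence for every $i$ it contains a set $\widetilde{U}_{i}$ that is $\mathcal{PM}_{G_{i}}$-open in the $i$-th coordinate and equal to $\{e\}$ in the others, and the product $\widetilde{U}_{1}\cdots\widetilde{U}_{n}=\prod_{i}U_{i}$ lies in $(W')^{n}\subseteq W$. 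This produces a basic $\Pi$-neighbourhood inside $W$, so $C\leq\Pi$, showing that $\Pi$ is the greatest lower bound $\bigwedge_{i}A_{i}$. Combining $\mathcal{PM}_{G}\leq A_{i}$ for all $i$ with $\bigwedge_{i}A_{i}=\Pi$ gives $\mathcal{PM}_{G}\leq\Pi=\prod_{i}\mathcal{PM}_{G_{i}}$, which together with the first part gives equality. The main obstacle throughout is handling meets correctly in $\mathcal{PL}(G)$ rather than among arbitrary topologies, and it is the joint continuity of multiplication that repairs the failure of the naive intersection to be a product topology.
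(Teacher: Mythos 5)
Your overall architecture is sound and is exactly the intended adaptation of the arguments the paper itself defers to (the paper gives no proof, pointing only to the similar proofs in \cite[Subsection 3.3]{DTY2005}): Theorem~\ref{t02} applied to the projections for the inequality over arbitrary $I$, and Proposition~\ref{p04} to extend each maximal topology $\sigma$ on $G_{i}$ to a maximal topology $A_{i}^{\sigma}=\sigma\times\delta$ on $G$ in which $G_{i}$ is open, for the finite case. The first part is correct as written, the identification of the extension of Proposition~\ref{p04} with $\sigma\times\delta$ is correct, and your final computation $\bigwedge_{i}A_{i}=\prod_{i}\mathcal{PM}_{G_{i}}$ via $(W')^{n}\subseteq W$ and $\widetilde{U}_{1}\cdots\widetilde{U}_{n}=\prod_{i}U_{i}$ handles the meet-versus-intersection issue properly.

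The one step that does not survive scrutiny as written is your justification of $\bigwedge_{\sigma}A_{i}^{\sigma}=A_{i}$: you argue that a common neighbourhood of $e$ has its $i$-th slice ``lying in every $\sigma$ and hence in $\mathcal{PM}_{G_{i}}$''. For a single set, being a neighbourhood of $e$ in every maximal topology does not imply being a neighbourhood in $\mathcal{PM}_{G_{i}}$ --- the infimum in $\mathcal{PL}(G_{i})$ is in general strictly coarser than the set-theoretic intersection of the topologies, which is precisely the pitfall you flag (correctly) in your last paragraph but commit here. The inference is valid only at the level of topologies, and the repair is short: take any paratopological group topology $C$ with $C\leq A_{i}^{\sigma}$ for every maximal $\sigma$ on $G_{i}$; since $G_{i}$ is open in each $A_{i}^{\sigma}$ with subspace topology $\sigma$, the restriction $C\upharpoonright_{G_{i}}$ is a paratopological group topology on $G_{i}$ coarser than every maximal $\sigma$, hence $C\upharpoonright_{G_{i}}\leq\mathcal{PM}_{G_{i}}$ by the definition of the infimum; then for $C$-open $O$ and $x\in O$ the trace $(x^{-1}O)\cap G_{i}$ contains some $U\in\mathcal{PM}_{G_{i}}(e)$, so $xU\subseteq O$ is a basic $A_{i}$-neighbourhood, giving $C\leq A_{i}$. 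Applying this with $C=\mathcal{PM}_{G}$ (which is below every $A_{i}^{\sigma}$, these being maximal) yields $\mathcal{PM}_{G}\leq A_{i}$, which is all you actually use. Alternatively you can bypass the family $\{A_{i}^{\sigma}\}$ altogether: the inclusion $G_{i}\hookrightarrow G$ satisfies $G_{i}\cdot c_{G}(G_{i})=G$, so Theorem~\ref{t02} gives $\mathcal{PM}_{G}\upharpoonright_{G_{i}}\leq\mathcal{PM}_{G_{i}}$ directly, and the same translation argument gives $\mathcal{PM}_{G}\leq A_{i}$. With this repair your proof is complete.
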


\begin{theorem}
A product paratopological group $K\times H$ belongs to {\bf PTrans} in each of the following cases:

(a) $H$ admits a transversal paratopological group topology;

(b) $H$ is discrete and non-minimal and $K$ is non-discrete.
\end{theorem}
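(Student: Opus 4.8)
The plan is to treat both cases uniformly through the criterion of Theorem~\ref{tttt1} together with the product formula for submaximal topologies recorded in the preceding Proposition. Write $\tau$ for the given product topology on $K\times H$ and $\tau_{H}$ for the factor topology carried by $H$. Since the index set is finite, that Proposition gives the equality $\mathcal{PM}_{K\times H}=\mathcal{PM}_{K}\times\mathcal{PM}_{H}$ (only the inequality $\mathcal{PM}_{K\times H}\leq\mathcal{PM}_{K}\times\mathcal{PM}_{H}$ will actually be needed). I would argue by contradiction: suppose $K\times H\notin$ \textbf{PTrans}. The product topology $\tau$ is non-discrete — in case (a) because $\tau_{H}$ is non-discrete, being one member of a transversal pair, and in case (b) because $K$ is non-discrete — so Theorem~\ref{tttt1} applies and yields $\tau\leq\mathcal{PM}_{K\times H}=\mathcal{PM}_{K}\times\mathcal{PM}_{H}$.

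The key reduction is then to pass to the slice $\{e_{K}\}\times H$, a subgroup of $K\times H$ isomorphic to $H$. Taking subspace topologies preserves the order $\leq$, and the subspace topology induced on a slice by a product coincides with the corresponding factor topology; hence restricting $\tau\leq\mathcal{PM}_{K}\times\mathcal{PM}_{H}$ to $\{e_{K}\}\times H$ produces the single inequality $\tau_{H}\leq\mathcal{PM}_{H}$. It remains only to contradict this in each case.

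In case (a) the topology $\tau_{H}$ is non-discrete and $(H,\tau_{H})\in$ \textbf{PTrans}, so Theorem~\ref{tttt1} forces $\tau_{H}\nleqslant\mathcal{PM}_{H}$, directly contradicting $\tau_{H}\leq\mathcal{PM}_{H}$. In case (b) we have $\tau_{H}=\delta_{H}$, the discrete topology, so $\tau_{H}\leq\mathcal{PM}_{H}$ means $\mathcal{PM}_{H}=\delta_{H}$; but $H$ is non-minimal, so it admits a non-discrete paratopological group topology, which by the Zorn argument recorded just after the definition of $\mathcal{PM}_{G}$ lies below some maximal non-discrete topology $M\lneqq\delta_{H}$, whence $\mathcal{PM}_{H}\leq M\lneqq\delta_{H}$ is non-discrete — again a contradiction. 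In both cases we conclude $K\times H\in$ \textbf{PTrans}.

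The step demanding the most care is the elementary but essential slice observation — that restriction to $\{e_{K}\}\times H$ both preserves $\leq$ and reads off exactly $\tau_{H}$ — combined with the correct invocation of the finite-product equality $\mathcal{PM}_{K\times H}=\mathcal{PM}_{K}\times\mathcal{PM}_{H}$. Case (a) could alternatively be derived from Theorem~\ref{t03}(1), since $\{e_{K}\}\times H$ satisfies $(\{e_{K}\}\times H)\cdot c_{K\times H}(\{e_{K}\}\times H)=K\times H$; however, the route through $\mathcal{PM}$ above disposes of (a) and (b) in one stroke and sidesteps any separate discussion of infiniteness or of the product being non-discrete.
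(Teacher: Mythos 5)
Your proof is correct and takes essentially the approach the paper intends: the paper states this theorem without an explicit argument, deferring to the analogous proofs in \cite[Subsection 3.3]{DTY2005}, which run exactly as you do — combine the criterion of Theorem~\ref{tttt1} with the finite-product equality $\mathcal{PM}_{K\times H}=\mathcal{PM}_{K}\times\mathcal{PM}_{H}$ from the preceding proposition, and read off $\tau_{H}\leq\mathcal{PM}_{H}$ on the slice $\{e_{K}\}\times H$ to reach a contradiction in each case. Your verifications of the non-discreteness hypotheses (of the product in both cases, and of $\tau_{H}$ in case (a)) and the Zorn-lemma step showing $\mathcal{PM}_{H}$ is non-discrete when $H$ is non-minimal are all sound, so nothing is missing.
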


\begin{theorem}
If $\{G_{i}: i\in I\}$ is a family of Abelian paratopological groups, then $G=\prod_{i\in I}G_{i}\in$ {\bf PTrans} iff there exists $i\in I$ such that $G_{i}\in$ {\bf PTrans}.
\end{theorem}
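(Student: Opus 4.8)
The plan is to reduce both directions to the criterion of Theorem~\ref{tttt1}, namely that a non-discrete paratopological group $(G,\tau)$ lies in \textbf{PTrans} exactly when $\tau\nleqslant\mathcal{PM}_{G}$, used in tandem with the product estimate $\mathcal{PM}_{G}\geq\prod_{i\in I}\mathcal{PM}_{G_{i}}$ from the proposition above. Throughout I write $\tau_{i}$ for the given topology on $G_{i}$ and $\tau=\prod_{i\in I}\tau_{i}$ for the product topology on $G$.

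For the sufficiency, suppose $G_{i_{0}}\in\textbf{PTrans}$ for some $i_{0}\in I$. I would split off the distinguished coordinate and regard $G$ as the binary product $G=G_{i_{0}}\times K$ with $K=\prod_{i\in I\setminus\{i_{0}\}}G_{i}$, and then apply case (a) of the product theorem proved above with $H=G_{i_{0}}$: since $G_{i_{0}}$ admits a transversal paratopological group topology, so does $G_{i_{0}}\times K=G$. The point of routing through that theorem rather than through the $\mathcal{PM}$-criterion is that for an infinite index set one has only the inequality $\mathcal{PM}_{G}\geq\prod_{i}\mathcal{PM}_{G_{i}}$, and this inequality is oriented the wrong way to transport $\tau_{i_{0}}\nleqslant\mathcal{PM}_{G_{i_{0}}}$ up to $\tau\nleqslant\mathcal{PM}_{G}$.

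For the necessity I would argue contrapositively: assuming that no $G_{i}$ lies in \textbf{PTrans}, I aim to show $\tau\leq\mathcal{PM}_{G}$, so that $G\notin\textbf{PTrans}$ by Theorem~\ref{tttt1}. For every non-discrete factor, Theorem~\ref{tttt1} gives $\tau_{i}\leq\mathcal{PM}_{G_{i}}$; granting for the moment that all factors are non-discrete, these inequalities multiply coordinatewise to $\tau=\prod_{i}\tau_{i}\leq\prod_{i}\mathcal{PM}_{G_{i}}$, and the product estimate supplies $\prod_{i}\mathcal{PM}_{G_{i}}\leq\mathcal{PM}_{G}$, whence $\tau\leq\mathcal{PM}_{G}$ as required. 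Here the estimate $\mathcal{PM}_{G}\geq\prod_{i}\mathcal{PM}_{G_{i}}$ is used in precisely its favorable direction, so no finiteness of $I$ is needed.

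The step I expect to be the main obstacle is the treatment of discrete factors, where Theorem~\ref{tttt1} does not apply: a discrete factor has $\tau_{i}=\delta_{G_{i}}$, and once $G_{i}$ is non-minimal -- which, in the Abelian setting, happens for every infinite factor, since every infinite Abelian group admits a non-discrete group topology -- one has $\delta_{G_{i}}\nleqslant\mathcal{PM}_{G_{i}}$, so the coordinatewise computation of the previous paragraph cannot absorb such a coordinate. This is genuinely delicate, because case (b) of the product theorem above shows that even a single discrete non-minimal factor sitting in an otherwise non-discrete product already forces membership in \textbf{PTrans}; consequently the necessity direction is only available after the discrete coordinates have been controlled, and the clean equivalence is really obtained under the standing normalization that the factors $G_{i}$ are non-discrete. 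I would therefore either adopt that normalization explicitly or quarantine the discrete coordinates at the outset and run the $\mathcal{PM}$-argument on the non-discrete part, and pinning down exactly this reduction -- showing that the two mechanisms cover every configuration of the family $\{G_{i}\}$ without overlap or gap -- is the crux of the argument.
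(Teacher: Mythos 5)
Your proposal is correct and takes essentially the route the paper intends: the paper gives no proof of this theorem at all, deferring to the analogous arguments in \cite[Subsection 3.3]{DTY2005}, and what you reconstruct is exactly that machinery --- case (a) of the preceding binary product theorem for sufficiency, and Theorem~\ref{tttt1} combined with the inequality $\mathcal{PM}_{G}\geq\prod_{i\in I}\mathcal{PM}_{G_{i}}$ (used in its favorable direction, so the infinitude of $I$ is indeed harmless) for necessity. Your caveat about discrete factors is also well taken rather than a defect of your argument: as literally stated the necessity direction fails --- for instance, discrete $\mathbb{Z}$ times the compact Boolean group $\mathbb{Z}(2)^{\omega}$ lies in \textbf{PTrans} by case (b), while neither factor does, since a discrete group admits no transversal pair by definition and every paratopological group topology on a Boolean group is a group topology, so the precompact factor is excluded by the fact that no precompact group topology admits a transversal group topology --- hence the theorem carries the tacit normalization, present in the formulation it mirrors in \cite{DTY2005}, that the factors $G_{i}$ are non-discrete, and under that normalization your two-sided argument is complete.
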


Finally, we discuss the maximal precompact paratopological group. For a group $G$, we denote by $\mathcal{P}_{G}$ (resp., $\mathcal{PP}_{G}$) the maximal precompact topological group (resp., paratopological group) topology of $G$.

By Theorem~\ref{tttt}, it is easy to see that $\mathcal{PM}_{G}\geq \mathcal{PP}_{G}$ does not hold. Indeed, take the unit circle $\mathbb{T}$ with the half interval topology $\tau$. Then $\tau$ is precompact, hence $\tau\leq \mathcal{PP}_{\mathbb{T}}$. However, $\mathcal{PM}_{\mathbb{T}}$ is a topological group by Theorem~\ref{tttt}, hence $\mathcal{PM}_{\mathbb{T}}\ngeq\mathcal{PP}_{\mathbb{T}}$. Moreover, it follows from Theorem~\ref{tttt1} that $\mathcal{PP}_{\mathbb{T}}$ admits a transversal paratopological group topology, which is different from the fact that no precompact group topology on an infinite group admits a transversal group topology.

Let $G$ be a group. Obviously, $(\mathcal{PP}_{G})^{-1}$ is also a maximal precompact paratopological group on $G$. Let $\mathcal{TP}_{G}$ be the supremum of all topological group topologies which is coarser than $\mathcal{PP}_{G}$ and $(\mathcal{PP}_{G})^{-1}$. Obviously, we have $\mathcal{TP}_{G}\leq\mathcal{M}_{G}$. However, we have the following question.

\begin{question}
Does $\mathcal{PM}_{G}\geq \mathcal{P}_{G}$ or $\mathcal{PM}_{G}\geq \mathcal{TP}_{G}$ hold for every group $G$?
\end{question}

Let $\sharp$ be the class of Abelian group $G$ such that $\mathcal{PM}_{G}$ is precompact. Then we have the following theorem.

\begin{theorem}\label{tt00}
An Abelian group $G$ belongs to $\sharp$ if and only if every non-discrete non-precompact paratopological group topology on $G$ has a transversal one.
\end{theorem}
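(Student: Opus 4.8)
The plan is to reduce both implications to the transversality criterion of Theorem~\ref{tttt1}, which asserts that a non-discrete paratopological group topology $\tau$ on $G$ admits a transversal one precisely when $\tau\nleqslant\mathcal{PM}_{G}$. Granting that criterion, I only need two elementary preliminary facts. First, a paratopological group topology coarser than a precompact one is again precompact: if $\tau\leq\mathcal{PM}_{G}$ and $(G,\mathcal{PM}_{G})$ is precompact, then every $U\in\tau(e)$ is also a member of $\mathcal{PM}_{G}(e)$, so finitely many translates of $U$ cover $G$, and the same translates witness precompactness of $(G,\tau)$. Second, $\mathcal{PM}_{G}$ is non-discrete whenever $G$ is infinite: since an infinite Abelian group carries a non-discrete group topology, the poset $\mathcal{PL}(G)\setminus\{\delta_{G}\}$ is nonempty and, by Zorn's lemma, contains a maximal element; as $\mathcal{PM}_{G}$ lies below that element, it is strictly coarser than the discrete topology.

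For the forward implication I would assume $\mathcal{PM}_{G}$ is precompact and take any non-discrete, non-precompact paratopological group topology $\tau$ on $G$. If $\tau\leq\mathcal{PM}_{G}$ held, the coarsening fact would make $\tau$ precompact, contradicting the choice of $\tau$; hence $\tau\nleqslant\mathcal{PM}_{G}$, and Theorem~\ref{tttt1} yields $(G,\tau)\in\textbf{PTrans}$, i.e. $\tau$ has a transversal one. For the reverse implication I would argue by contradiction: suppose every non-discrete non-precompact paratopological group topology on $G$ has a transversal one, yet $\mathcal{PM}_{G}$ is not precompact. Because $\mathcal{PM}_{G}$ is non-discrete (by the second preliminary fact) and now non-precompact, the hypothesis applies to $\mathcal{PM}_{G}$ itself and forces $\mathcal{PM}_{G}\in\textbf{PTrans}$. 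But then Theorem~\ref{tttt1} gives $\mathcal{PM}_{G}\nleqslant\mathcal{PM}_{G}$, which is absurd; therefore $\mathcal{PM}_{G}$ must be precompact, that is, $G\in\sharp$.

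The delicate point, and the step I would treat most carefully, is the application of Theorem~\ref{tttt1} to $\tau=\mathcal{PM}_{G}$ in the reverse direction: the criterion is only available for non-discrete topologies, so the argument genuinely needs the non-discreteness of $\mathcal{PM}_{G}$. This is exactly where the Abelian hypothesis enters, guaranteeing that $G$ (infinite) admits a non-discrete paratopological group topology and hence that $\mathcal{PM}_{G}$ is non-discrete; the remainder of the reasoning is group-theoretically neutral. I would also record the degenerate case explicitly: if $G$ is finite then every topology on $G$ is precompact, so both sides of the equivalence hold trivially, and the substantive content is confined to the infinite case.
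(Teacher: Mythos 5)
Your proof is correct and takes essentially the same route as the paper's: both directions reduce to the criterion of Theorem~\ref{tttt1}, using in one direction that a topology coarser than the precompact $\mathcal{PM}_{G}$ is itself precompact, and in the other that $\mathcal{PM}_{G}\in\textbf{PTrans}$ would force the absurdity $\mathcal{PM}_{G}\nleqslant\mathcal{PM}_{G}$. Your explicit checks that $\mathcal{PM}_{G}$ is non-discrete for infinite Abelian $G$ and that the finite case is trivial merely fill in details the paper leaves implicit.
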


\begin{proof}
Suppose that every non-discrete non-precompact paratopological group topology on $G$ has a transversal one. By Theorem~\ref{tttt1}, $\mathcal{PM}_{G}$ must be precompact if $(G, \mathcal{PM}_{G})\not\in${\bf PTrans}, thus, $G\in\sharp$.

Assume that $G\in\sharp$, then $\mathcal{PM}_{G}$ is precompact, thus every non-precompact paratopological group topology $\tau$ on $G$ satisfies $\tau\nleq\mathcal{PM}_{G}$. By Theorem~\ref{tttt1}, $(G, \tau)\in${\bf PTrans}.
\end{proof}

\smallskip
\section{The transversality and paratopological group topology determined by PT-sequence}
Note that all groups are Abelian and all paratopological group topologies are supposed to be $T_{1}$ in this section. From the above discussion of the transversality of paratopological groups, it natural to consider the following question.

\begin{question}
What kind of characterization of a paratopological group $(G, \tau)$ such that $(G, \tau)$ admits a transversal topology?
\end{question}

In this section, we use a similar method in \cite{PZ} and introduce the concept of $PT$-filter (in particular, $PT$-sequence) in the class of paratopological groups. We main prove that a paratopological group $G$, endowed with the strongest paratopological group topology in which the sequence $\mathcal{S}$ converges to $0$, does not admit a $T_{1}$-complementary Hausdorff paratopological group topology on $G$. Moreover, we prove that each Hausdorff paratopological group topology on an infinite group determined by
any a $PT$-sequence is transversal. First, we give a definition.

Let $\varphi$ be a family of non-empty subsets of a group $G$. For each sequence $\{U_{n}\}_{n\in\omega}$ of elements of $\varphi$ let $$\sum_{n\in\omega}U_{n}=\bigcup_{n\in\omega}(U_{0}+\ldots+U_{n}).$$
Denote by $\sum(\varphi)$ the family of subsets of the form $\sum_{n\in\omega}U_{n}$, where $U_{n}\in\varphi$ for each $n\in\omega$. For each $U\in\varphi$, put $U^{\ast}=U\cup\{0\}$ and denote $\varphi^{\ast}=\{U^{\ast}: U\in\varphi\}$.

\begin{definition}
A filter $\varphi$ on a group $G$ is called a {\it $PT$-filter} (resp., {\it $T$-filter}) if there exists a paratopological group topology (resp., group topology) $\tau$ in which $\varphi$ converges to $0$. Given any $PT$-filter (resp., $T$-filter) on a group $G$ denote by $P(G|\varphi)$ (resp., $T(G|\varphi)$) the group $G$ endowed with the strongest paratopological group topology (resp., group topology) in which $\varphi$ converges to $0$.
\end{definition}

{\bf Note:} Obviously, if $\varphi$ is a $T$-filter, then $\varphi$ is a $PT$-filter and $T(G|\varphi)\subset P(G|\varphi)$, thus $P(G|\varphi)$ is Hausdorff. However, there exists a $PT$-filter $\varphi$ which is not a $T$-filter. Indeed, take an arbitrary $T_{1}$-paratopological group $(G, \tau)$ which is not Hausdorff. Let $\varphi$ be a filter of neighborhoods of $0$ in $\tau$. Then $(G, \tau)=P(G|\varphi)$. However, $T(G|\varphi)$ does not exist since $(G, \tau)$ is not Hausdorff.

\begin{theorem}\label{t0000}
Let $\varphi$ be a $PT$-filter on group $G$. Then $P(G|\varphi)$ is Hausdorff if and only if $\varphi$ is a $T$-filter on group $G$.
\end{theorem}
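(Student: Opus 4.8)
The plan is to treat the two implications separately, the forward one being essentially free. If $\varphi$ is a $T$-filter then, as already observed in the Note preceding the statement, $T(G|\varphi)\subseteq P(G|\varphi)$ and $T(G|\varphi)$ is a $T_{1}$ group topology, hence Hausdorff; a topology finer than a Hausdorff one is Hausdorff, so $P(G|\varphi)$ is Hausdorff. All the work is therefore in the converse: assuming $\tau:=P(G|\varphi)$ is Hausdorff, I must exhibit a $T_{1}$ group topology on $G$ in which $\varphi$ converges to $0$. The guiding intuition is that this group topology is obtained by \emph{symmetrizing} $\varphi$; note that $P(G|\varphi)$ need not itself be a topological group (the Sorgenfrey line is $P(\mathbb{R}|\varphi)$ for the filter generated by the sets $[0,\varepsilon)$, is Hausdorff, yet is not a topological group, the witnessing group topology being the coarser Euclidean one).

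Concretely, for $U\in\varphi$ put $S(U):=U^{\ast}\cup(-U^{\ast})$, where $-A=\{-x:x\in A\}$. Each $S(U)$ is symmetric ($-S(U)=S(U)$) and contains $0$, and $S(U)\cap S(U')\supseteq S(U\cap U')$, so $\{S(U):U\in\varphi\}$ is a filter base. Applying the construction used in the proof of Theorem~\ref{t00} to this base, the family $\{\sum_{n\in\omega}S(U_{n}):U_{n}\in\varphi\}$ is a neighborhood base at $0$ of a paratopological group topology $\sigma$ on $G$; and since every such set is symmetric, $\sigma$ has a symmetric neighborhood base at $0$ and is therefore a \emph{topological} group. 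Moreover $\varphi$ converges to $0$ in $\sigma$: each basic neighborhood satisfies $\sum_{n\in\omega}S(U_{n})\supseteq S(U_{0})\supseteq U_{0}\in\varphi$, so $\sigma(0)\subseteq\varphi$. Thus it only remains to prove that $\sigma$ is Hausdorff, for then $\sigma$ is a $T_{1}$ group topology in which $\varphi$ converges to $0$, witnessing that $\varphi$ is a $T$-filter.

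The Hausdorffness of $\sigma$ is the crux, and I expect it to be the main obstacle; the idea is to transfer it from $\tau$ by a combinatorial inclusion. I would use that a paratopological group is Hausdorff iff $\bigcap\{W-W:W\in\tau(0)\}=\{0\}$, where $W-W=\{w-w':w,w'\in W\}$; since by Theorem~\ref{t00} the sets $W=\sum_{n\in\omega}U_{n}^{\ast}$ form a base of $\tau(0)$, Hausdorffness of $\tau$ says $\bigcap_{(U_{n})}\bigl(\sum_{n\in\omega}U_{n}^{\ast}-\sum_{n\in\omega}U_{n}^{\ast}\bigr)=\{0\}$. The key lemma is the inclusion
\[
\sum_{n\in\omega}S(U_{n})\subseteq W-W,\qquad W=\sum_{n\in\omega}U_{n}^{\ast}.
\]
Indeed, an element of the left-hand side is a finite sum $s_{0}+\dots+s_{k}$ with, for each $j$, either $s_{j}\in U_{j}^{\ast}$ or $-s_{j}\in U_{j}^{\ast}$; collecting the two kinds of terms and padding the missing indices with $0\in U_{j}^{\ast}$ expresses it as $a-b$ with $a,b\in U_{0}^{\ast}+\dots+U_{k}^{\ast}\subseteq W$. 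Since $\sigma$ is a topological group with the symmetric base $\{\sum_{n\in\omega}S(U_{n})\}$, it is Hausdorff as soon as $\bigcap_{(U_{n})}\sum_{n\in\omega}S(U_{n})=\{0\}$, and the inclusion reduces this to the Hausdorffness of $\tau$ just recalled.

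Two points need care. First, the verification that $\{\sum_{n\in\omega}S(U_{n})\}$ really obeys the neighborhood-base axioms of a topological group (equivalently, that the filter generated by $\{S(U)\}$ is a $PT$-filter): the halving step, producing a basic neighborhood $N'$ with $N'+N'\subseteq N$, is exactly the reindexing device in the proof of Theorem~\ref{t00} (discard the initial summands and pass to a faster-shrinking choice of $U_{n}\in\varphi$), and the symmetry $-\sum_{n\in\omega}S(U_{n})=\sum_{n\in\omega}S(U_{n})$ supplies continuity of inversion, upgrading $\sigma$ from paratopological to topological. Second, the Hausdorffness criterion $\bigcap_{W}(W-W)=\{0\}$ for $\tau$, which I would either quote or derive directly by separating $0$ from an arbitrary $g\neq0$ through the disjoint basic sets $W$ and $g+W$ together with homogeneity of $(G,\tau)$. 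Everything else is routine bookkeeping with the inclusions $S(U)\supseteq U$ and $-S(U)=S(U)$.
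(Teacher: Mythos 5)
Your proof is correct, and it is a genuinely more constructive route than the paper's. The paper disposes of the nontrivial direction in two lines: it takes $\delta$ to be the strongest group topology coarser than $P(G|\varphi)$ and simply asserts that $\delta$ is Hausdorff because $P(G|\varphi)$ is, leaving the justification implicit; note that $\varphi$ converges to $0$ in $\delta$ automatically since $\delta\subseteq P(G|\varphi)$. The content hiding behind the paper's ``it is easy to see'' is exactly the symmetrization mechanism you make explicit: in an Abelian Hausdorff paratopological group the sets $W-W$, $W\in\tau(0)$, have intersection $\{0\}$ and generate a coarser Hausdorff group topology. You implement this at the level of the filter itself, via $S(U)=U^{\ast}\cup(-U^{\ast})$ and the base $\{\sum_{n\in\omega}S(U_{n})\}$, and your key inclusion $\sum_{n\in\omega}S(U_{n})\subseteq W-W$ with $W=\sum_{n\in\omega}U_{n}^{\ast}$ (valid by the padding-and-rearranging argument, which uses the standing Abelian hypothesis of Section 5, as does the halving step borrowed from Proposition~\ref{p00}) transfers Hausdorffness from $\tau$ to your $\sigma$ through the correct criterion $\bigcap_{W\in\tau(0)}(W-W)=\{0\}$. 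What your approach buys is a self-contained proof with an explicit Hausdorff group topology $\sigma\subseteq T(G|\varphi)$ witnessing that $\varphi$ is a $T$-filter, filling the step the paper waves through; what the paper's formulation buys is brevity and the canonical choice of witness (the finest coarser group topology), but as written it is an assertion rather than an argument. Your treatment of the forward implication coincides with the paper's.
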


\begin{proof}
Obviously, $\varphi$ is a $T$-filter on group $G$, then $P(G|\varphi)$ is Hausdorff since $T(G|\varphi)\subset P(G|\varphi)$. Let $P(G|\varphi)$ be Hausdorff, and let $\delta$ be the strongest group topology on $G$ which is coarser than $P(G|\varphi)$. Then it is easy to see that $\delta$ is Hausdorff since $P(G|\varphi)$ is Hausdorff.
\end{proof}

{\bf Note:} In general, $P(G|\varphi)\neq T(G|\varphi)$ even if $P(G|\varphi)$ is Hausdorff. Indeed, take an arbitrary Hausdorff paratopological group $(G, \tau)$ which is not regular. Let $\varphi$ be a filter of neighborhoods of $0$ in $\tau$. Then  $(G, \tau)=P(G|\varphi)$. However, it follows from Theorem~\ref{t0000} that $\varphi$ is a $T$-filter. Then $P(G|\varphi)\neq T(G|\varphi)$ (that is, $T(G|\varphi)\lvertneqq P(G|\varphi)$) since $P(G|\varphi)$ is not regular.

We give a characterization on a group $G$ such that a filter $\varphi$ is a $PT$-filter, and a representation of neighborhood base at 0 in $P(G|\varphi)$, see Theorem~\ref{t00}. We first show two propositions.

\begin{proposition}\label{p00}
For each filter $\varphi$ on a group $G$ the following statements hold:

\smallskip
($\mbox{a}$) $0\in\bigcap_{U\in\sum(\varphi^{\ast})}U$;

\smallskip
($\mbox{b}$) for any subsets $U, V\in\sum(\varphi^{\ast})$, there is $W\in\sum(\varphi^{\ast})$ such that $W\subset U\cap V$;

 \smallskip
($\mbox{c}$) for each subset $U\in\sum(\varphi^{\ast})$, there is $V\in\sum(\varphi^{\ast})$ such that $V+V\subset U.$
\end{proposition}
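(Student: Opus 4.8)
The plan is to handle the three assertions in increasing order of difficulty, where (a) and (b) are bookkeeping and (c) carries the real content. Throughout I write a typical element of $\sum(\varphi^{\ast})$ as $U=\sum_{n\in\omega}U_{n}^{\ast}=\bigcup_{n\in\omega}(U_{0}^{\ast}+\cdots+U_{n}^{\ast})$ with each $U_{n}\in\varphi$, and I use repeatedly that $U_{n}^{\ast}=U_{n}\cup\{0\}$ contains $0$.

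For (a) I would observe that, since $0\in U_{0}^{\ast}$, the very first term of the increasing union already contains $0$, so $0\in U$; as $U\in\sum(\varphi^{\ast})$ was arbitrary, $0$ lies in the intersection. For (b) the point is that $\varphi$ is a filter, hence closed under finite intersections. Writing $U=\sum_{n}U_{n}^{\ast}$ and $V=\sum_{n}V_{n}^{\ast}$, I would set $W_{n}=U_{n}\cap V_{n}\in\varphi$ and $W=\sum_{n}W_{n}^{\ast}$. The one identity needed is $(U_{n}\cap V_{n})^{\ast}=U_{n}^{\ast}\cap V_{n}^{\ast}$, which follows from $(A\cup\{0\})\cap(B\cup\{0\})=(A\cap B)\cup\{0\}$. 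Then $W_{n}^{\ast}\subset U_{n}^{\ast}$ and $W_{n}^{\ast}\subset V_{n}^{\ast}$ give $W_{0}^{\ast}+\cdots+W_{n}^{\ast}\subset U_{0}^{\ast}+\cdots+U_{n}^{\ast}$ for every $n$, and symmetrically for $V$; taking unions yields $W\subset U\cap V$.

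The substantive step is (c), and there the plan is an interleaving construction. Given $U=\sum_{n}U_{n}^{\ast}$, I would put $V_{n}=U_{2n}\cap U_{2n+1}\in\varphi$ and $V=\sum_{n}V_{n}^{\ast}\in\sum(\varphi^{\ast})$. To verify $V+V\subset U$, take $v\in V_{0}^{\ast}+\cdots+V_{n}^{\ast}$ and $v'\in V_{0}^{\ast}+\cdots+V_{m}^{\ast}$ with, say, $n\le m$. Since $G$ is Abelian the set sum is commutative, so $v+v'$ lies in
$$(V_{0}^{\ast}+V_{0}^{\ast})+\cdots+(V_{n}^{\ast}+V_{n}^{\ast})+V_{n+1}^{\ast}+\cdots+V_{m}^{\ast}.$$
Here $V_{i}^{\ast}+V_{i}^{\ast}\subset U_{2i}^{\ast}+U_{2i+1}^{\ast}$, so the doubled part lands in the consecutive block $U_{0}^{\ast}+\cdots+U_{2n+1}^{\ast}$, while each surviving single term satisfies $V_{i}^{\ast}\subset U_{2i}^{\ast}$ and thus occupies a distinct even slot with index $>2n+1$. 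Because $0\in U_{k}^{\ast}$ for every $k$, I can pad all unused slots by $0$ and conclude $v+v'\in U_{0}^{\ast}+\cdots+U_{2m}^{\ast}\subset U$.

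I expect the only delicate point to be the index bookkeeping in (c): one must check that the doubled terms together with the leftover single terms occupy pairwise distinct indices inside a single partial sum $U_{0}^{\ast}+\cdots+U_{K}^{\ast}$. This is exactly what the even/odd splitting $V_{n}=U_{2n}\cap U_{2n+1}$ secures, in combination with the padding-by-zero afforded by $0\in U_{k}^{\ast}$ and the commutativity of the Abelian group used to regroup the doubled factors.
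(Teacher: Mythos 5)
Your proof is correct, and on (a) and (b) it fills in exactly what the paper dismisses as ``obviously true.'' On (c) you follow the same general route as the paper --- intersect pairs of the $U_{n}$ and form the corresponding element of $\sum(\varphi^{\ast})$ --- but your pairing differs from the paper's in a way that genuinely matters: the paper sets $V_{n}=U_{n}\cap U_{n+1}$, and as written this does not suffice. A term of $V_{i}^{\ast}$ is only guaranteed to fit into slot $i$ or slot $i+1$ of a partial sum $U_{0}^{\ast}+\cdots+U_{K}^{\ast}$, so in $v+v'$ the four terms drawn from $V_{0}^{\ast},V_{0}^{\ast},V_{1}^{\ast},V_{1}^{\ast}$ would have to occupy four distinct slots among $\{0,1,2\}$, which is impossible (a Hall-condition failure). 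The defect is in the construction, not merely the argument: on $G=\mathbb{Z}$ let $\varphi$ be the filter generated by the sets $B_{k}=\{10^{a}:a\geq k\}$ and take $U_{n}=B_{4^{n}}$, so that the paper's $V_{n}=U_{n}^{\ast}\cap U_{n+1}^{\ast}=U_{n+1}^{\ast}$. Then $x=10^{4}+10^{5}+10^{16}+10^{17}=(10^{4}+10^{16})+(10^{5}+10^{17})\in(V_{0}+V_{1})+(V_{0}+V_{1})\subset V+V$, yet $x\notin U$: in any representation $x=t_{0}+\cdots+t_{k}$ with $t_{i}\in U_{i}^{\ast}$, every nonzero term is a power $10^{a_{i}}$ with $4^{i}\leq a_{i}\leq 17$ (since $t_{i}\leq x<10^{18}$), forcing $i\leq 2$ and hence at most three nonzero terms, while $x\equiv 4\pmod 9$ and a sum of $j\leq 3$ powers of ten is $\equiv j\pmod 9$.

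Your choice $V_{n}=U_{2n}\cap U_{2n+1}$ is precisely what repairs this: the doubled terms $V_{i}^{\ast}+V_{i}^{\ast}\subset U_{2i}^{\ast}+U_{2i+1}^{\ast}$ fill the consecutive block $U_{0}^{\ast}+\cdots+U_{2n+1}^{\ast}$, the leftover single terms sit in distinct even slots beyond it, and padding the unused slots with $0\in U_{k}^{\ast}$, together with commutativity (the standing Abelian assumption of Section 5), yields $v+v'\in U_{0}^{\ast}+\cdots+U_{2m}^{\ast}\subset U$. So your index bookkeeping, far from being a pedantic detail, is the correct version of the paper's own argument; the paper's proof of (c) should be read with your even/odd splitting in place of its literal $V_{n}=U_{n}\cap U_{n+1}$.
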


\begin{proof}
Obviously, (a) and (b) are true. To prove (c), fix an arbitrary sequence $\{U_{n}\}_{n\in\omega}$ such that $U=\sum_{n\in\omega}U_{n}$. For each $n\in\omega$, let $V_{n}=U_{n}\cap U_{n+1}$. Put $V=\sum_{n\in\omega}V_{n}$. Then $V+V\subset U$.
\end{proof}

\begin{proposition}\label{p01}
Let $(G, \tau)$ be a paratopological group and $\varphi$ a filter on $G$ which converges to $0$ in $\tau$. Then for each neighborhood $U$ of $0$ there is $V\in\sum(\varphi^{\ast})$ such that $V\subset U$.
\end{proposition}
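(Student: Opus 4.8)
The plan is to build, starting from the given neighborhood $U$ of $0$, a sequence $\{U_{n}\}_{n\in\omega}$ of members of $\varphi$ whose running sums $U_{0}^{\ast}+\cdots+U_{n}^{\ast}$ all remain inside $U$; then $V=\sum_{n\in\omega}U_{n}^{\ast}$ will be the required element of $\sum(\varphi^{\ast})$. The only structural inputs I need are the joint continuity of addition in the paratopological group (note that continuity of inversion is \emph{not} used, which is precisely what makes the statement valid in the paratopological setting) together with the fact that, since $\varphi$ converges to $0$, every neighborhood of $0$ contains a member of $\varphi$.

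First I would use continuity of addition at $(0,0)$ to construct a descending sequence of neighborhoods of $0$. Set $O_{0}=U$, and having chosen $O_{n}$, pick a neighborhood $O_{n+1}$ of $0$ with $O_{n+1}+O_{n+1}\subseteq O_{n}$; such an $O_{n+1}$ exists because $+\colon G\times G\to G$ is continuous and $0+0=0$ (take $O_{n+1}$ to be the intersection of the two factor neighborhoods). Each $O_{n}$ is a neighborhood of $0$, so in particular $0\in O_{n}$.

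The technical heart of the argument is a telescoping estimate: for all integers $1\le m\le n$,
$$O_{m}+O_{m+1}+\cdots+O_{n}\subseteq O_{m-1}.$$
I would prove this by induction on $n-m$. The base case $n=m$ is $O_{m}\subseteq O_{m-1}$, which holds because $0\in O_{m}$ gives $O_{m}\subseteq O_{m}+O_{m}\subseteq O_{m-1}$. For the inductive step, the induction hypothesis yields $O_{m+1}+\cdots+O_{n}\subseteq O_{m}$, whence
$$O_{m}+(O_{m+1}+\cdots+O_{n})\subseteq O_{m}+O_{m}\subseteq O_{m-1}.$$
Specializing to $m=1$ gives $O_{1}+\cdots+O_{n}\subseteq O_{0}=U$ for every $n$. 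This is the step that most needs care, since it relies on each $O_{k}$ containing $0$ (to guarantee that the partial sums grow monotonically) and on repeatedly absorbing an entire tail into a single halving step; everything else is bookkeeping with the associativity of set sums.

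Finally, since $\varphi$ converges to $0$, for each $n\in\omega$ there is $U_{n}\in\varphi$ with $U_{n}\subseteq O_{n+1}$, and then $U_{n}^{\ast}=U_{n}\cup\{0\}\subseteq O_{n+1}$ as well because $0\in O_{n+1}$. Combining this with the telescoping estimate, for every $n$ we obtain
$$U_{0}^{\ast}+\cdots+U_{n}^{\ast}\subseteq O_{1}+\cdots+O_{n+1}\subseteq U,$$
so that $V=\sum_{n\in\omega}U_{n}^{\ast}=\bigcup_{n\in\omega}(U_{0}^{\ast}+\cdots+U_{n}^{\ast})\subseteq U$. Since each $U_{n}$ lies in $\varphi$, the set $V$ belongs to $\sum(\varphi^{\ast})$ by definition, which completes the proof.
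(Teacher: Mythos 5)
Your proof is correct and follows essentially the same route as the paper: an inductively chosen halving sequence of neighborhoods ($O_{n+1}+O_{n+1}\subseteq O_{n}$) combined with the telescoping estimate, yielding $V=\sum_{n\in\omega}U_{n}^{\ast}\subseteq U$. The only cosmetic difference is that you pass to members $U_{n}\in\varphi$ inside each $O_{n+1}$, whereas the paper uses the neighborhoods themselves, which already lie in $\varphi$ since a filter converging to $0$ contains every neighborhood of $0$ by upward closedness.
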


\begin{proof}
By induction, it can find a sequence $\{V_{n}\}_{n\in\omega}$ of neighborhoods of $0$ such that $V_{0}+V_{0}\subset U$ and $V_{n+1}+V_{n+1}\subset V_{n}$ for each $n\in\omega$. Let $V=\sum_{n\in\omega}V_{n}$. Then $V\subset U$ and $V\in\sum(\varphi^{\ast})$.
\end{proof}

\begin{theorem}\label{t00}
A filter $\varphi$ on a group $G$ is a $PT$-filter if and only if $\bigcap\sum(\varphi^{\ast})=\{0\}$. For each $PT$-filter $\varphi$, the family $\sum(\varphi^{\ast})$ is a base of neighborhoods of $0$ for the paratopological group $P(G|\varphi)$.
\end{theorem}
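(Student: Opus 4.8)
The plan is to introduce the single canonical topology that $\sum(\varphi^{\ast})$ ``wants'' to generate, verify it has all the required properties, and then read off both assertions from it. First I would invoke Proposition~\ref{p00}: parts (a), (b), (c) say precisely that $\sum(\varphi^{\ast})$ is a family of subsets each containing $0$, directed downward by inclusion, and closed under ``halving'' in the sense that every member $U$ admits a member $V$ with $V+V\subset U$. Since every group in this section is Abelian, the conjugation axiom $g+V-g\subset U$ is automatic, so these are exactly the axioms guaranteeing a unique paratopological group topology $\tau_{0}$ on $G$ having $\sum(\varphi^{\ast})$ as a base of neighborhoods of $0$. This $\tau_{0}$ is the object around which everything else revolves.

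Second, I would check two facts about $\tau_{0}$ that hold unconditionally. For convergence, observe that for any basic neighborhood $V=\sum_{n\in\omega}U_{n}^{\ast}$ one has $U_{0}\subset U_{0}^{\ast}\subset V$ with $U_{0}\in\varphi$, so $V\in\varphi$; hence every $\tau_{0}$-neighborhood of $0$ lies in $\varphi$, which is the definition of $\varphi$ converging to $0$ in $\tau_{0}$. For maximality, if $\tau$ is any paratopological group topology in which $\varphi$ converges to $0$, then by Proposition~\ref{p01} every $\tau$-neighborhood of $0$ contains some $V\in\sum(\varphi^{\ast})$; translating to an arbitrary point, every $\tau$-open set is $\tau_{0}$-open, so $\tau\subset\tau_{0}$. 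Combining the two facts shows that the strongest such topology exists and equals $\tau_{0}$, which is exactly the claim that $\sum(\varphi^{\ast})$ is a base of neighborhoods of $0$ for $P(G|\varphi)$, and in fact $P(G|\varphi)=\tau_{0}$ outright.

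Third, I would pin down the equivalence by identifying the intersection condition with the $T_{1}$ separation of $\tau_{0}$. For the direction that a $PT$-filter forces the intersection to be $\{0\}$, suppose $\varphi$ converges to $0$ in some $T_{1}$ paratopological group topology $\tau$; given $x\neq 0$, the set $G\setminus\{x\}$ is a $\tau$-neighborhood of $0$, so by Proposition~\ref{p01} it contains some $V\in\sum(\varphi^{\ast})$, whence $x\notin V$ and thus $x\notin\bigcap\sum(\varphi^{\ast})$; together with Proposition~\ref{p00}(a) this gives $\bigcap\sum(\varphi^{\ast})=\{0\}$. Conversely, assuming $\bigcap\sum(\varphi^{\ast})=\{0\}$, I would show $\tau_{0}$ is $T_{1}$ by computing $\overline{\{0\}}$: a point $y$ lies in $\overline{\{0\}}$ iff $0\in y+V$ for every $V\in\sum(\varphi^{\ast})$, i.e.\ iff $-y\in\bigcap\sum(\varphi^{\ast})=\{0\}$, forcing $y=0$; homogeneity then makes every singleton closed. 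Since $\varphi$ already converges to $0$ in $\tau_{0}$, this topology witnesses that $\varphi$ is a $PT$-filter.

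The routine parts are the filter-base verifications, all delegated to Propositions~\ref{p00} and~\ref{p01}. The one point demanding care, and the real content of the theorem, is the recognition that $\tau_{0}$ always exists and always has $\varphi$ converging to $0$, so that being a $PT$-filter is \emph{not} a question of the existence of a topology at all, but solely of whether the canonical topology $\tau_{0}$ separates $0$ from the other points; the condition $\bigcap\sum(\varphi^{\ast})=\{0\}$ is exactly this $T_{1}$ requirement. I expect the main obstacle to be stating the maximality step cleanly, since it is what legitimizes the notation $P(G|\varphi)$ and simultaneously identifies it with $\tau_{0}$.
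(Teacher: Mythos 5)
Your proof is correct and follows essentially the same route as the paper's: both directions rest on exactly Propositions~\ref{p00} and~\ref{p01}, with the intersection condition identified as the $T_{1}$ requirement and the maximality of the generated topology read off from Proposition~\ref{p01}. The only cosmetic difference is that you construct the canonical topology $\tau_{0}$ unconditionally and then test it for $T_{1}$, whereas the paper introduces it only under the hypothesis $\bigcap\sum(\varphi^{\ast})=\{0\}$; your expanded verification of the closure of $\{0\}$ and of the convergence of $\varphi$ in $\tau_{0}$ merely makes explicit what the paper leaves implicit.
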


\begin{proof}
Let $\varphi$ be a $PT$-filter and $\tau$ a paratopological group in which $\varphi$ converges to $0$. Since $\tau$ is $T_{1}$, it follows from Proposition~\ref{p01} that $\bigcap\sum(\varphi^{\ast})=\{0\}$.

Assume that $\bigcap\sum(\varphi^{\ast})=\{0\}$. By Proposition~\ref{p00}, $\sum(\varphi^{\ast})$ is a base of neighborhoods of $0$ for some $T_{1}$-paratopological group topology $\eta$ on $G$. By Proposition~\ref{p01}, it has $\eta=P(G|\varphi)$.
\end{proof}

The rest of this section we main concern about a filter being generated by a sequence.
Let $\{a_{n}\}_{n\in\omega}$ be a sequence of elements of $X$. For each $n\in\omega$, put $A_{n}=\{a_{m}: m\geq n\}$. Then $$\varphi=\{F\subset X: A_{n}\subset F\ \mbox{for some}\ n\in\omega\}$$ is a filter with base $\{A_{n}: n\in\omega\}$. We say that filter $\varphi$ is generated by sequence $\{a_{n}\}_{n\in\omega}$. For any $k, m\in\omega$, put $$A(k, m)=\{g_{0}+g_{1}+\ldots+g_{k}: g_{0}, g_{1}, \ldots, g_{k}\in A_{m}^{\ast}\}.$$

\begin{definition}
A sequence $\{a_{n}\}_{n\in\omega}$ of elements of group $G$ is called a {\it $PT$-sequence} if there is a paratopological group topology on $G$ in which $\{a_{n}\}_{n\in\omega}$ converges to $0$. Denote by $P(G|\{a_{n}\}_{n\in\omega})$ the group $G$ endowed with the strongest paratopological group topology in which $\{a_{n}\}_{n\in\omega}$ converges to $0$. We say that a partopological group $\tau$ on $G$ is determined by $PT$-sequence $\{a_{n}\}_{n\in\omega}$ if $(G, \tau)=P(G|\{a_{n}\}_{n\in\omega})$. By a similar way, we can define the $T$-sequence within the class of topological group.
\end{definition}

By Theorem~\ref{t00}, we have the following proposition.

\begin{proposition}\label{p02}
A sequence $\{a_{n}\}_{n\in\omega}$ of a group $G$ is a $PT$-sequence if and only if for each element $g\in G\setminus\{0\}$ there exists a sequence $\{A_{n_{i}}\}_{i\in\omega}$ such that $g\not\in\sum_{i\in\omega}A_{n_{i}}^{\ast}$.
\end{proposition}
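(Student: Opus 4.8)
The plan is to apply Theorem~\ref{t00} to the filter $\varphi$ generated by the sequence $\{a_{n}\}_{n\in\omega}$. A topology makes the sequence $\{a_{n}\}$ converge to $0$ if and only if every neighborhood of $0$ contains a tail $A_{n}$, i.e.\ belongs to $\varphi$; thus $\{a_{n}\}$ is a $PT$-sequence precisely when $\varphi$ is a $PT$-filter, and Theorem~\ref{t00} reduces this to the condition $\bigcap\sum(\varphi^{\ast})=\{0\}$. So the whole task is to rewrite $\bigcap\sum(\varphi^{\ast})$ in terms of the basic sets $A_{n}^{\ast}$ and then translate the equality with $\{0\}$ into the stated condition.

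First I would record that, since $\{A_{n}:n\in\omega\}$ is a base for $\varphi$, the family $\{A_{n}^{\ast}:n\in\omega\}$ is a base for $\varphi^{\ast}$: every $U\in\varphi^{\ast}$ has the form $V\cup\{0\}$ with $V\in\varphi$, hence contains some $A_{n}^{\ast}=A_{n}\cup\{0\}$. The key step is then a monotonicity/cofinality argument. Given any $S=\sum_{n\in\omega}U_{n}\in\sum(\varphi^{\ast})$, choose for each $n$ an index $k_{n}$ with $A_{k_{n}}^{\ast}\subset U_{n}$. Since the Minkowski sum is monotone in each summand, $A_{k_{0}}^{\ast}+\ldots+A_{k_{n}}^{\ast}\subset U_{0}+\ldots+U_{n}$ for every $n$, and taking unions gives $\sum_{n\in\omega}A_{k_{n}}^{\ast}\subset S$, where the left-hand set again lies in $\sum(\varphi^{\ast})$ because each $A_{k_{n}}^{\ast}\in\varphi^{\ast}$. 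Conversely, every set of the form $\sum_{i\in\omega}A_{n_{i}}^{\ast}$ is itself a member of $\sum(\varphi^{\ast})$. Hence the subfamily $\{\sum_{i\in\omega}A_{n_{i}}^{\ast}:(n_{i})\in\omega^{\omega}\}$ is cofinal downward in $\sum(\varphi^{\ast})$, so the two families have the same intersection:
$$\bigcap\sum(\varphi^{\ast})=\bigcap\Big\{\textstyle\sum_{i\in\omega}A_{n_{i}}^{\ast}:(n_{i})\in\omega^{\omega}\Big\}.$$

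Finally I would translate $\bigcap\sum(\varphi^{\ast})=\{0\}$ into the assertion. Part~(a) of Proposition~\ref{p00} guarantees $0\in\bigcap\sum(\varphi^{\ast})$ in all cases, so the equality with $\{0\}$ is equivalent to saying that no $g\in G\setminus\{0\}$ lies in every set $\sum_{i\in\omega}A_{n_{i}}^{\ast}$; that is, for each $g\in G\setminus\{0\}$ there exists a sequence $\{A_{n_{i}}\}_{i\in\omega}$ with $g\notin\sum_{i\in\omega}A_{n_{i}}^{\ast}$. Combined with Theorem~\ref{t00}, this yields the stated equivalence.

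I expect the only genuine content to be the cofinality step identifying $\bigcap\sum(\varphi^{\ast})$ with the intersection over the basic sums $\sum_{i\in\omega}A_{n_{i}}^{\ast}$; everything else is a direct unfolding of the definitions together with Theorem~\ref{t00} and Proposition~\ref{p00}(a). The one point to handle carefully is the monotonicity of the iterated Minkowski sum, which is precisely what allows a termwise replacement of each $U_{n}$ by a smaller basic set $A_{k_{n}}^{\ast}$ to shrink the entire sum $\sum_{n\in\omega}U_{n}$.
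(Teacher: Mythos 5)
Your proposal is correct and follows exactly the paper's route: the paper derives Proposition~\ref{p02} simply with the remark ``By Theorem~\ref{t00}'', i.e.\ by applying that theorem to the filter generated by the sequence. Your cofinality argument (replacing each $U_{n}\in\varphi^{\ast}$ by a basic tail $A_{k_{n}}^{\ast}$ via monotonicity of the iterated sum, so that $\bigcap\sum(\varphi^{\ast})$ equals the intersection over the sums $\sum_{i\in\omega}A_{n_{i}}^{\ast}$) just makes explicit the step the paper leaves implicit, and it is sound.
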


The following theorem provides a more convenient criterion of $PT$-sequence.

\begin{theorem}\label{th003}
A sequence $\{a_{n}\}_{n\in\omega}$ of a group $G$ is a $PT$-sequence if and only if for each $k\in\omega$ and for each $g\in G\setminus\{0\}$ there is $m\in\omega$ such that $g\not\in A(k, m)$.
\end{theorem}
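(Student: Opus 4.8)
The plan is to derive everything from Proposition~\ref{p02}, which already recasts the $PT$-property as follows: $\{a_{n}\}_{n\in\omega}$ is a $PT$-sequence if and only if for every $g\in G\setminus\{0\}$ there is a ``staircase'' of tails $\{A_{n_{i}}\}_{i\in\omega}$ with $g\notin\sum_{i\in\omega}A_{n_{i}}^{\ast}=\bigcup_{i\in\omega}(A_{n_{0}}^{\ast}+\cdots+A_{n_{i}}^{\ast})$. So the task reduces to comparing the ``same-tail'' sets $A(k,m)=A_{m}^{\ast}+\cdots+A_{m}^{\ast}$ ($k+1$ summands) with these staircase sums. Throughout I write $S_{i}=A_{n_{0}}^{\ast}+\cdots+A_{n_{i}}^{\ast}$ and recall that each nonzero value occurs only finitely often in $\{a_{n}\}$ once the criterion holds (apply it with $k=0$, since $g\notin A(0,m)=A_{m}^{\ast}$ for large $m$ forces $a_{l}\neq g$ eventually).

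The necessity direction is short. Assuming $\{a_{n}\}$ is a $PT$-sequence, fix $k\in\omega$ and $g\neq 0$ and pick, via Proposition~\ref{p02}, a staircase with $g\notin\sum_{i\in\omega}A_{n_{i}}^{\ast}$. Setting $m=\max\{n_{0},\dots,n_{k}\}$ gives $A_{m}^{\ast}\subseteq A_{n_{j}}^{\ast}$ for every $j\leq k$, hence $A(k,m)\subseteq A_{n_{0}}^{\ast}+\cdots+A_{n_{k}}^{\ast}\subseteq\sum_{i\in\omega}A_{n_{i}}^{\ast}$, so $g\notin A(k,m)$. This is exactly the asserted criterion.

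For sufficiency I would build the required staircase greedily, one $g$ at a time. Fix $g\neq 0$. Since $a_{l}=g$ only finitely often, choose $n_{0}$ past all such $l$, so that $g\notin S_{0}$. Given $n_{0}<\cdots<n_{i-1}$ with $g\notin S_{i-1}$, the next step succeeds precisely when the \emph{bad set} $B_{i}=\{l:\,g-a_{l}\in S_{i-1}\}$ is finite: then any $n_{i}>\max(n_{i-1},\max B_{i})$ yields $g\notin S_{i-1}+A_{n_{i}}^{\ast}=S_{i}$, and the resulting staircase satisfies $g\notin\bigcup_{i}S_{i}$, as needed. The engine of the proof is therefore the finiteness lemma: \emph{under the criterion, if $g\neq 0$ and $g\notin S_{i-1}$ then $B_{i}$ is finite}. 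I would prove this by induction on $i$ (that is, on the bound $i+1$ on the number of summands). If $B_{i}$ were infinite, I would extract infinitely many representations $g=a_{l}+s_{l}$ with $s_{l}\in S_{i-1}$ and $\mathrm{ind}(a_{l})\to\infty$; spelling out each $s_{l}$ exhibits $g$ as a sum of at most $i+1$ terms. The criterion applied with $k=i$ forces each such representation to contain a summand of index below the threshold $M_{i}(g)=\min\{m:\,g\notin A(i,m)\}$; these low-index summands range over a finite set, so by the pigeonhole principle infinitely many representations share a fixed summand $a_{c}$. Subtracting it produces infinitely many representations of $g-a_{c}$ by at most $i$ summands with an index tending to infinity, contradicting the inductive hypothesis applied to $g-a_{c}$.

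The hard part, and the step I would spend the most care on, is exactly this finiteness/continuation lemma, for two reasons. First, the inductive step must be set up so that $g-a_{c}$ is again a legitimate instance of the hypothesis, which means tracking that the peeled-off summand really has index at least $n_{0}$ (so it belongs to one of the available tails) and that $g-a_{c}$ still misses the relevant partial staircase. Second, and most delicate, is the danger that the representations \emph{collapse through} $0$: the argument breaks down if $g-a_{c}=0$, or if infinitely many $a_{l}$ equal $0$, since the criterion says nothing about representations of $0$ and indeed a group may carry arbitrarily long far-out relations summing to $0$ (as in $\mathbb{Z}(p^{\infty})$ or $\bigoplus_{\omega}\mathbb{Z}/p$). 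The point I would exploit to close this gap is the maintained invariant $g\notin S_{i-1}$: it rules out $g-a_{c}=0$ (which would put $g=a_{c}\in S_{i-1}$) and excludes zero summands $a_{l}$ (for which $g-a_{l}=g\notin S_{i-1}$), so the collapse cannot occur and the induction goes through. Carrying this invariant faithfully through each peeling is the crux of the whole argument.
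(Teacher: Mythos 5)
Your proof is correct, and it reaches the sufficiency half by a recognizably different engine than the paper's, so a comparison is worth recording. The necessity argument is identical (both set $m=\max\{n_{0},\ldots,n_{k}\}$ and note $A(k,m)\subseteq A_{n_{0}}^{\ast}+\cdots+A_{n_{k}}^{\ast}$), and both sufficiency proofs share the same scaffolding: a greedy construction of the staircase demanded by Proposition~\ref{p02}, with a contradiction extracted from the failure of the continuation step, and with the maintained invariant $g\notin\sum_{j\le k}A_{n_{j}}^{\ast}$ doing the crucial work of excluding collapse through $0$. But the combinatorial cores differ. The paper, faced with infinitely many representations $g=g_{0}(i)+\cdots+g_{k+1}(i)$, passes to a subsequence along which every coordinate is either constant or escapes to infinity, subtracts \emph{all} constant coordinates $b_{j_{1}},\ldots,b_{j_{t}}$ at once, uses the invariant to see that the residual $g_{1}=g-b_{j_{1}}-\cdots-b_{j_{t}}$ is nonzero, and finishes with a single application of the criterion to $A(k+1-t,m)$; there is no induction on the number of summands. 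You instead isolate a standalone finiteness lemma for the bad set $B_{i}=\{l:\, g-a_{l}\in S_{i-1}\}$, where $S_{i-1}=A_{n_{0}}^{\ast}+\cdots+A_{n_{i-1}}^{\ast}$, and prove it by induction on the length of the staircase, peeling one repeated low-index summand per step via pigeonhole beneath the explicit threshold $M_{i}(g)=\min\{m:\, g\notin A(i,m)\}$. Your two checks --- that the peeled residual $g-a_{c}$ is nonzero and that it misses the shortened staircase --- are exactly where the invariant enters, and they correspond to the paper's single observation that $g_{1}\neq 0$; so you correctly located the one genuinely delicate point, since the criterion indeed says nothing about representations of $0$. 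What each route buys: yours is more elementary and quantitative (pigeonhole on a finite set of values below $M_{i}(g)$ replaces the paper's Ramsey-type simultaneous stabilization of all coordinates), it makes explicit the clean equivalence ``the step succeeds iff $B_{i}$ is finite,'' and quantifying the lemma over all finite staircases is precisely what makes the recursion well-founded; the paper's one-shot subtraction of the constant block is shorter but hides that bookkeeping inside the phrase ``by passing to a subsequence.'' Both are valid, and your version would even serve as a more self-contained substitute for the paper's Case 1/Case 2 analysis.
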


\begin{proof}
Necessity. Let $\{a_{n}\}_{n\in\omega}$ be a $PT$-sequence. Fix any $k\in\omega$ and $g\in G\setminus\{0\}$. By Proposition~\ref{p02}, it can find a sequence $\{A_{n_{i}}\}_{i\in\omega}$ such that $g\not\in\sum_{i\in\omega}A_{n_{i}}^{\ast}$. Let $m=\max\{n_{0}, \ldots, n_{k}\}$. Then $A(k, m)\subset\sum_{i\in\omega}A_{n_{i}}^{\ast}$, thus $g\not\in A(k, m)$.

Sufficiency. By Proposition~\ref{p02}, it suffices to prove that for each element $g\in G\setminus\{0\}$ there exists a sequence $\{A_{n_{i}}\}_{i\in\omega}$ such that $g\not\in\sum_{i\in\omega}A_{n_{i}}^{\ast}$. Fix a $g\in G\setminus\{0\}$. Next we shall construct such a sequence $\{A_{n_{i}}\}_{i\in\omega}$ by induction. Take $n_{0}$ such that $g\not\in A_{n_{0}}^{\ast}$. Assume that $n_{0}, \ldots, n_{k}\in\omega$ have been chosen such that $g\not\in \sum_{i\leq k}A_{n_{i}}^{\ast}$, and that a choice of $n_{k+1}$ is impossible. Therefore, there exist sequence $\{g_{0}(i)\}_{i\in\omega}$, $\ldots, \{g_{k+1}(i)\}_{i\in\omega}$ taking their values respectively in $A_{n_{0}}^{\ast}, \ldots, A_{n_{k}}^{\ast}$ so that $g=g_{0}(i)+\ldots+g_{k+1}(i)$, where $\{g_{k+1}(i)\}_{i\in\omega}$ is a subsequence of $\{a_{n}\}_{n\in\omega}$. By passing to a subsequence, we divide the proof into the following two cases.

{\bf Case 1:} $\{g_{j}(i)\}_{i\in\omega}$ is a subsequence of $\{a_{n}\}_{n\in\omega}$ for all $j\leq k$.

By the assumption, it can find $m\in\omega$ such that $g\not\in A(k+1, m)$. However, for any sufficiently large values of $i$ ones have $g=g_{0}(i)+\ldots+g_{k+1}(i)\in A(k+1, m)$, which is a contradiction.

{\bf Case 2:} There exists $j\leq k$ such that $g_{j}(i)=b_{j}$ for all $i\in\omega$.

Let $B=\{j_{1}, \ldots, j_{t}\}$ be the set of all indices with this property. Then we have $$g_{1}=g-b_{j_{1}}-\ldots-b_{j_{t}}=\sum_{s\not\in B}g_{s}(i)$$ for any $i\in\omega$. Becasue $b_{j_{1}}+\ldots+b_{j_{t}}\in\sum_{j\leq k}A_{n_{j}}^{\ast}$ and $g\not\in\sum_{j\leq k}A_{n_{j}}^{\ast}$, it follows that $g_{1}\neq 0$. By the assumption, we can find $m\in\omega$ such that $g_{1}\not\in A(k+1-t, m)$, which is a contradiction with the definition of $g_{1}$.
\end{proof}

{\bf Note:} By \cite[Theorem 2.17]{PZ}, it follows that each infinite group has a nontrivial $PT$-sequence. Moreover, every subsequence of $PT$-sequence is a $PT$-sequence as well. Therefore, there are at least $\mathrm{c}$ distinct $PT$-sequences for any infinite group $G$. Further, it follows from \cite[Theorem 2.1.12]{PZ} that there exists a family of cardinality of $\mathrm{c}$ of pairwise transversal paratopological group topologies which are determined by $PT$-sequences.

\begin{definition}
A subset $U$ of a group $G$ is called a {\it $PT$-subset} if $U$ is a neighborhood of $0$ for some non-discrete paratopological group topology on $G$.
\end{definition}

Given any elements $a_{0}, \ldots, a_{n}$ of group $G$, let $$X(a_{0}, \ldots, a_{n})=\{x_{0}a_{0}+\ldots +x_{n}a_{n}: x_{i}\in\{0, 1, \ldots i+1\}, i\leq n\}.$$It is easy to see that, for each sequence $\{a_{n}\}_{n\in\omega}$ of elements of group $G$, ones have $$\sum_{n\in\omega}A_{n}^{\ast}=\bigcup_{n\in\omega}X(a_{0}, \ldots, a_{n}).$$

\begin{theorem}
For each subset $U$ of group $G$, the following statements are equivalent:

(1) $U$ is a $PT$-subset;

(2) $U$ is a neighborhood of $0$ for some non-discrete paratopological group topology determined by nontrivial $PT$-sequence;

(3) there is a sequence $\{a_{n}\}_{n\in\omega}$ of distinct elements of $G$ so that $\bigcup_{n\in\omega}X(a_{0}, \ldots, a_{n})\subset U$.
\end{theorem}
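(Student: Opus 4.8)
The plan is to prove the cycle $(1)\Rightarrow(3)\Rightarrow(2)\Rightarrow(1)$. The implication $(2)\Rightarrow(1)$ is immediate from the definitions: if $U$ is a neighborhood of $0$ in $P(G|\{a_n\})$ for a nontrivial $PT$-sequence $\{a_n\}$, then $P(G|\{a_n\})$ is a non-discrete $T_{1}$ paratopological group topology (non-discrete because $\{a_n\}$ is nontrivial, $T_{1}$ because $\bigcap\sum(\varphi^{\ast})=\{0\}$ by Theorem~\ref{t00}), so $U$ is a $PT$-subset.

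For $(1)\Rightarrow(3)$ I would use joint continuity of addition together with the fact that, in a non-discrete $T_{1}$ paratopological group $(G,\sigma)$, every neighborhood of $0$ is infinite: if some open $W\ni 0$ were finite then $W\setminus\{0\}$ would be closed, making $\{0\}=W\cap(G\setminus(W\setminus\{0\}))$ open and $\sigma$ discrete. Starting from a given neighborhood $U$ of $0$, choose recursively open neighborhoods $U=V_{0}\supseteq V_{1}\supseteq\cdots$ of $0$ with $V_{n+1}+V_{n+1}\subseteq V_{n}$ (whence $V_{1}+\cdots+V_{n}\subseteq V_{0}=U$ by telescoping); for each $i$ choose an open $W_{i}\subseteq V_{i+1}$ with $\underbrace{W_{i}+\cdots+W_{i}}_{i+1}\subseteq V_{i+1}$, and pick $a_{i}\in W_{i}\setminus(\{0\}\cup\{a_{0},\dots,a_{i-1}\})$, which exists since $W_{i}$ is infinite. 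Because $0\in W_{i}$, any $x_{i}\le i+1$ copies of $a_{i}$ give $x_{i}a_{i}\in V_{i+1}$, so $x_{0}a_{0}+\cdots+x_{n}a_{n}\in V_{1}+\cdots+V_{n+1}\subseteq U$. Thus $X(a_{0},\dots,a_{n})\subseteq U$ for every $n$, and the distinct elements $\{a_{n}\}$ witness $(3)$.

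For $(3)\Rightarrow(2)$, set $S=\bigcup_{n}X(a_{0},\dots,a_{n})=\sum_{n}A_{n}^{\ast}\subseteq U$, and let $\varphi$ be the filter generated by $\{a_{n}\}$, so that $S\in\sum(\varphi^{\ast})$. If $\{a_{n}\}$ happens to be a $PT$-sequence we are done at once: by Theorem~\ref{t00} the family $\sum(\varphi^{\ast})$ is a base of neighborhoods of $0$ in $P(G|\{a_{n}\})$, so $S$, and hence $U\supseteq S$, is a neighborhood of $0$ there. The trouble is that the sequence produced in $(3)$ need not be a $PT$-sequence, and --- unlike for Hausdorff topologies --- this cannot be repaired by passing to a subsequence: a persistent short representation of a nonzero element survives in every tail (for instance, if $2a_{j}=g\neq 0$ for all $j$, then $g=a_{j}+a_{j}$ shows $g\in A(1,m)$ for all $m$, so by Theorem~\ref{th003} neither $\{a_{n}\}$ nor any subsequence is a $PT$-sequence). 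Instead I would pass to a sequence of block sums $c_{k}=a_{m_{k}}+a_{m_{k}+1}+\cdots+a_{m_{k+1}-1}$ for a strictly increasing $m_{0}<m_{1}<\cdots$ with $m_{k}\ge k$. A coefficient count keeps the sums inside $S$: in any element $\sum_{l\le K}y_{l}c_{l}$ of $X(c_{0},\dots,c_{K})$ (so $y_{l}\le l+1$) the coefficient of each $a_{i}$ with $i\in[m_{l},m_{l+1})$ equals $y_{l}\le l+1\le m_{l}+1\le i+1$, which respects the bound defining $X$; hence $\bigcup_{k}X(c_{0},\dots,c_{k})\subseteq S\subseteq U$ for every choice of the boundaries.

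The main obstacle is then to choose the boundaries $\{m_{k}\}$ so that $\{c_{k}\}$ is genuinely a $PT$-sequence, that is, so that the criterion of Theorem~\ref{th003} holds for $\{c_{k}\}$: for every nonzero $h$ in the countable subgroup $\langle a_{n}:n\in\omega\rangle$ and every $j\in\omega$ there must be an $m$ for which $h$ is not a sum of at most $j+1$ elements of $\{c_{l}:l\ge m\}\cup\{0\}$. I would build $\{m_{k}\}$ by a recursion enumerating the (countably many) pairs $(h,j)$ and, at each step, advancing the next boundary far enough to annihilate every representation of the current $h$ by at most $j+1$ tail block sums, the guiding idea being that coarsening the sequence into longer blocks destroys such representations without reviving the finitely many already eliminated (in the torsion example above, taking all blocks of even length forces each $c_{k}$ into $\{x:2x=0\}$ and removes the offending $g$). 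Proving that an admissible next boundary always exists --- equivalently, that every persistent short representation can be killed by coarsening --- is the combinatorial heart, and is the paratopological counterpart of the Protasov--Zelenyuk construction behind \cite[Theorem~2.17]{PZ}. Once $\{c_{k}\}$ is a $PT$-sequence, Theorem~\ref{t00} shows that $\bigcup_{k}X(c_{0},\dots,c_{k})\subseteq U$ is a neighborhood of $0$ in $P(G|\{c_{k}\})$, so $U$ is a neighborhood of $0$ for the non-discrete paratopological group topology determined by the nontrivial $PT$-sequence $\{c_{k}\}$, which is exactly $(2)$.
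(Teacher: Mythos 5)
Your implications (2)$\Rightarrow$(1) and (1)$\Rightarrow$(3) are correct, and follow the same skeleton as the paper; indeed your (1)$\Rightarrow$(3) is more careful than the paper's own sketch, since you explicitly arrange the $(i+1)$-fold sums $W_{i}+\cdots+W_{i}\subseteq V_{i+1}$ needed for the coefficient bound $x_{i}\le i+1$ in $X(a_{0},\ldots,a_{n})$, and you secure injectivity of $\{a_{n}\}$ from the fact that nonempty open sets in a non-discrete $T_{1}$ paratopological group are infinite. You also correctly identify the real difficulty in (3)$\Rightarrow$(2): the sequence produced in (3) need not be a $PT$-sequence, and a persistent relation such as $2a_{j}=g\neq0$ survives in every subsequence, by Theorem~\ref{th003}.

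But your resolution of that difficulty is not a proof. The whole implication (3)$\Rightarrow$(2) rests on the claim that the boundaries $m_{0}<m_{1}<\cdots$ can be chosen recursively so that the block sums $c_{k}=a_{m_{k}}+\cdots+a_{m_{k+1}-1}$ form a nontrivial $PT$-sequence; you explicitly call this ``the combinatorial heart'' and offer only one illustrative example (even-length blocks killing $2x=g$) in its place. No argument is given that an admissible next boundary always exists, that the $c_{k}$ can simultaneously be kept nonzero and injective, or that obstructions annihilated at one stage stay annihilated as later blocks are lengthened; killing one relation can create others (your even-length blocks push every $c_{k}$ into the subgroup $\{x: 2x=0\}$, where fresh degenerate relations among the block sums can arise), so the recursion is not obviously well-founded, and as written this is a deferred lemma of essentially the same depth as the theorem itself. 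The paper closes this step differently and completely: by \cite[Lemma 2.1.6]{PZ}, either some subsequence of $\{a_{n}\}$ is already a $T$-sequence (hence a $PT$-sequence, and Theorem~\ref{t00} makes $U$ a neighborhood of $0$ in the topology it determines), or there are a subsequence $\{b_{n}\}$, an element $g\neq0$ and $k\ge1$ with $kb_{n}=g$ for all $n$; in the latter case it uses multiples of single terms rather than sums across distinct terms --- $lc_{n}$ with $l=km/p$ when $g$ has finite order $m$ (so that $p\cdot lc_{n}=0$ and $lc_{n}\neq0$, again a $T$-sequence by the same lemma), and $d_{n}=(n+1)kb_{n}=(n+1)g$ when $g$ has infinite order --- and after passing to a subsequence the coefficient bookkeeping (a multiple $x\cdot l$ of a single term $a_{i}$ lies in $X(a_{0},\ldots,a_{i})$ as soon as $x\cdot l\le i+1$) keeps $X(lc_{0},\ldots,lc_{n})$, respectively $X(d_{0},\ldots,d_{n})$, inside $U$. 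To complete your argument you must either prove your boundary-selection lemma or replace it by this dichotomy from \cite[Lemma 2.1.6]{PZ}.
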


\begin{proof}
Clearly, (2) $\Rightarrow$ (1) holds. It suffices to prove that (1) $\Rightarrow$ (3) and (3) $\Rightarrow$ (2).

(1) $\Rightarrow$ (3) Let $\tau$ be a non-discrete paratopological group topology on $G$ in which $U$ is a neighborhood of $0$. Choose a sequence $\{U_{n}\}_{n\in\omega}$ of neighborhoods of $0$ such that $U_{0}+U_{0}\subset U$, $U_{n+1}+U_{n+1}\subset U_{n}, n\in\omega$. For each $n\in\omega$, take $a_{n}\in U_{n+1}\setminus U_{n}$. Thus $\bigcup_{n\in\omega}X(a_{0}, \ldots, a_{n})\subset U$.

(3) $\Rightarrow$ (2) By \cite[Lemma 2.1.6]{PZ}, we may assume that there exist a subsequence $\{b_{n}\}_{n\in\omega}$ of $\{a_{n}\}_{n\in\omega}$, an element $g\in G\setminus\{0\}$ and a positive integer $k$ so that $kb_{n}=g$ for all $n\in\omega$. First suppose that $g$ is an element of finite order $m$, hence $mkb_{n}=0$ for each $m\in\omega$. Then we can take a subsequence $\{c_{n}\}_{n\in\omega}$ of the sequence $\{b_{n}\}_{n\in\omega}$ and a positive prime number $p$ such that $\frac{mk}{p}c_{n}\neq0$ for each $n\in\omega$. Therefore, it follows from \cite[Lemma 2.1.6]{PZ} that $\{c_{n}\}_{n\in\omega}$ is a $T$-sequence, thus a $PT$-sequence. Put $l=\frac{km}{p}$. Finally, passing to subsequence we have $X(lc_{0}, \ldots, lc_{n})\subset U$ for each $n\in\omega$. Hence $U$ is a neighborhood of $0$ of the non-discrete paratopological group $P(G|\{lc_{n}\}_{n\in\omega})$.

Thus we may assume that $g$ is an element of infinite order. Put $d_{n}=(n+1)kb_{n}=(n+1)g\neq 0$ for each $n\in\omega$. Obviously, the elements of the sequence $\{d_{n}\}_{n\in\omega}$ are distinct each other. Because $d_{n}\neq 0$ for each $n\in\omega$, it follows from \cite[Lemma 2.1.6]{PZ} that $\{d_{n}\}_{n\in\omega}$ is a $T$-sequence, thus a $PT$-sequence. At last, passing to subsequence we have $X(d_{0}, \ldots, d_{n})\subset U$ for each $n\in\omega$. Hence $U$ is a neighborhood of $0$ of the non-discrete paratopological group $P(G|\{d_{n}\}_{n\in\omega})=P(G|\{(n+1)kb_{n}\}_{n\in\omega})$.
\end{proof}

\begin{corollary}
For each paratopological group $(G, \tau)$ the following statements are equivalent:
\begin{enumerate}
\item $\tau$ admits a transversal paratopological group topology;
\item there exists a neighborhood $U$ of $0$ in $(G, \tau)$ and a $PT$-subset $V$ such that $U\cap V=\{0\}$;
\item there exists a transversal paratopological group topology to $\tau$ determined by sequence.
\end{enumerate}
\end{corollary}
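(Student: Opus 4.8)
The plan is to prove the three statements equivalent by establishing the cycle $(3)\Rightarrow(1)\Rightarrow(2)\Rightarrow(3)$, working throughout under the standing assumption that $\tau$ is non-discrete (if $\tau$ were discrete, then taking $U=\{0\}$ would make $(2)$ trivially true while $(1)$ and $(3)$ fail, since transversality is only defined between non-discrete topologies, so the equivalence must be read with this proviso). The implication $(3)\Rightarrow(1)$ requires no work at all: a paratopological group topology that is transversal to $\tau$ and happens to be determined by a sequence is in particular a topology transversal to $\tau$.

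For $(1)\Rightarrow(2)$, suppose $\sigma$ is a paratopological group topology transversal to $\tau$. By definition $\sigma$ is non-discrete and the topology $\tau\vee\sigma$ generated by $\tau\cup\sigma$ is discrete. A basic neighbourhood of $0$ in $\tau\vee\sigma$ is a finite intersection of members of $\tau\cup\sigma$, and since $\tau(0)$ and $\sigma(0)$ are filters this may be written as $U\cap V$ with $U\in\tau(0)$ and $V\in\sigma(0)$. Because $\{0\}$ is open in $\tau\vee\sigma$, there are such $U,V$ with $U\cap V=\{0\}$. As $\sigma$ is a non-discrete paratopological group topology on $G$ and $V\in\sigma(0)$, the set $V$ is a $PT$-subset, which is exactly $(2)$.

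The substantive step is $(2)\Rightarrow(3)$. Given $U\in\tau(0)$ and a $PT$-subset $V$ with $U\cap V=\{0\}$, I would first invoke the equivalence $(1)\Leftrightarrow(2)$ of the preceding theorem characterizing $PT$-subsets: since $V$ is a $PT$-subset, $V$ is a neighbourhood of $0$ for some non-discrete paratopological group topology $\sigma$ determined by a nontrivial $PT$-sequence $\{a_{n}\}_{n\in\omega}$, so that $(G,\sigma)=P(G|\{a_{n}\}_{n\in\omega})$ and $V\in\sigma(0)$. It then remains to verify that $\sigma$ is transversal to $\tau$. Since $U\in\tau(0)$, $V\in\sigma(0)$ and $U\cap V=\{0\}$, the singleton $\{0\}$ is a neighbourhood of $0$ in $\tau\vee\sigma$; because the union of two paratopological group topologies generates a paratopological group topology, every singleton of $G$ is then open by translation, so $\tau\vee\sigma$ is discrete. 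As both $\tau$ and $\sigma$ are non-discrete, $\tau$ and $\sigma$ are transversal, and $\sigma$ is determined by the sequence $\{a_{n}\}_{n\in\omega}$, giving $(3)$ and closing the cycle.

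The one genuinely delicate point is the appeal to the $PT$-subset theorem in $(2)\Rightarrow(3)$: this is what upgrades the a priori arbitrary non-discrete topology witnessing that $V$ is a $PT$-subset to one that is actually determined by a nontrivial $PT$-sequence, and it is the single place where the theory developed in this section is used in an essential way. Everything else reduces to two elementary facts already available in the paper, namely that the supremum of two paratopological group topologies is again a paratopological group topology and that an open identity forces the whole topology to be discrete.
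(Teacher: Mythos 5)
Your proof is correct and is essentially the derivation the paper intends: the corollary is stated there without proof as an immediate consequence of the preceding $PT$-subset theorem, and your cycle $(3)\Rightarrow(1)\Rightarrow(2)\Rightarrow(3)$ — extracting $U\cap V=\{0\}$ from discreteness of the join, upgrading the witness topology for $V$ via that theorem to one determined by a nontrivial $PT$-sequence, and using that the join of two paratopological group topologies is again one — is exactly the intended argument. Your explicit caveat about non-discreteness of $\tau$ is a reasonable (and correct) reading of the transversality definition that the paper leaves implicit.
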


\begin{example}
There exists a $PT$-sequence $\{a_{n}\}_{n\in\omega}$ on the group $\mathbb{Z}$ such that $P(G|\{a_{n}\}_{n\in\omega})$ is a Hausdorff paratopological group which is not a topological group.
\end{example}

\begin{proof}
Let $\{b_{n}\}_{n\in\omega}$ be a sequence of positive integers for all $n\in\omega$, and let $a_{0}=1, a_{1}=b_{1}$ and $a_{n+2}=b_{n+2}a_{n+1}+a_{n}$ for each $n\in\omega$ (such as, the Fibonacci sequence). Then it follows from \cite[Theorem 2.2.7]{PZ} that $\{a_{n}\}_{n\in\omega}$ is a $T$-sequence, thus a $PT$-sequence. Obviously, we have $X(a_{0}, \ldots, a_{n})\subset \mathbb{Z}^{+}$ for each $n\in\omega$, thus $P(G|\{a_{n}\}_{n\in\omega})$ is a Hausdorff paratopological group which is not a topological group.
\end{proof}

Next we prove that $P(G|\{a_{n}\}_{n\in\omega})$ is a sequential space containing a closed copy of $S_{2}$, thus it is not a Fr\'{e}chet-Urysohn space.

\begin{theorem}\label{t001}
For each $PT$-sequence $\{a_{n}\}_{n\in\omega}$ on an any group $G$, the paratopological group $P(G|\{a_{n}\}_{n\in\omega})$ is sequential.
\end{theorem}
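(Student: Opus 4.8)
The plan is to prove sequentiality by reducing it to a local statement at the identity and exploiting homogeneity. Since $P(G|\{a_{n}\}_{n\in\omega})$ is a paratopological group, every translation $x\mapsto g+x$ is a homeomorphism, hence carries open sets to open sets and sequential neighborhoods to sequential neighborhoods. Therefore it suffices to prove the single assertion: \emph{every sequentially open set containing $0$ is a $\tau$-neighborhood of $0$}, where $\tau$ denotes the topology of $P(G|\{a_{n}\}_{n\in\omega})$. Indeed, given an arbitrary sequentially open $U$ and a point $g\in U$, the translate $-g+U$ is a sequential neighborhood of $0$, so by the local assertion it is a $\tau$-neighborhood of $0$, whence $U$ is a $\tau$-neighborhood of $g$; as $g$ was arbitrary, $U$ is open. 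Throughout I let $\varphi$ be the filter generated by $\{a_{n}\}_{n\in\omega}$ with base $\{A_{n}:n\in\omega\}$, and I use Theorem~\ref{t00}, which tells us that $\sum(\varphi^{\ast})$ is a base of neighborhoods of $0$ in $\tau$.

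I would prove the local assertion by contraposition. Suppose $0\in U$ but $U$ is not a $\tau$-neighborhood of $0$; the goal is to construct a sequence $\{x_{j}\}_{j\in\omega}$ converging to $0$ in $\tau$ with $x_{j}\notin U$ for every $j$, which shows that $U$ is not a sequential neighborhood of $0$. By Theorem~\ref{t00}, the failure of $U$ to be a neighborhood of $0$ means exactly that for \emph{every} nondecreasing sequence $(n_{i})_{i\in\omega}$ the basic set $\sum_{i\in\omega}A_{n_{i}}^{\ast}$ is not contained in $U$; that is, there exist finitely many elements $g_{i}\in A_{n_{i}}^{\ast}$ whose sum lies outside $U$. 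The engine for convergence is the following sufficient condition, immediate from the description of the base: if $x_{j}=g_{0}^{(j)}+\cdots+g_{k_{j}}^{(j)}$ with each $g_{i}^{(j)}$ equal to $0$ or to some $a_{m}$ with index $m\geq\mu_{j}$, then for any basic neighborhood $\sum_{i}A_{l_{i}}^{\ast}$ the inclusion $x_{j}\in\sum_{i}A_{l_{i}}^{\ast}$ holds as soon as $\mu_{j}\geq l_{k_{j}}$, since then each summand falls into one of the first $k_{j}+1$ slots $A_{l_{i}}^{\ast}$.

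The delicate point, which I expect to be the main obstacle, is to produce the bad points $x_{j}\notin U$ while guaranteeing this growth condition. Applying the non-neighborhood hypothesis to a constant index sequence $(m,m,\dots)$ yields, for each $m$, a bad sum of elements of $A_{m}^{\ast}$ with minimal index $\geq m$, but with \emph{no control} on its number of summands $k_{m}+1$; a large summand count forces the threshold $l_{k_{m}}$ required above to be large and can destroy convergence. To overcome this I would run a simultaneous induction: rather than constant index sequences, apply Theorem~\ref{t00} to tailored, rapidly increasing sequences $(n_{i})$ so that the summands of each bad point occupy successive slots of quickly growing index, and then extract by a diagonal argument a single sequence whose summand indices increase fast enough relative to their multiplicities so that every basic neighborhood $\sum_{i}A_{l_{i}}^{\ast}$ eventually contains the constructed points. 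This combinatorial core parallels the construction carried out for $T$-sequences in \cite{PZ}, and Theorem~\ref{th003} supplies the bookkeeping on finite sums that keeps the indices under control. Once such $\{x_{j}\}$ is obtained, the sufficient condition gives $x_{j}\to 0$ in $\tau$, while $x_{j}\notin U$ contradicts the assumption that $U$ is a sequential neighborhood of $0$; this completes the contrapositive and hence the proof of sequentiality.
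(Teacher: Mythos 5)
There is a genuine gap, and it is fatal to the plan as stated: your contraposition proves the wrong statement. The homogeneity reduction is correct, but after it, what must be shown is that every \emph{sequentially open} set $U\ni 0$ is a neighborhood of $0$. Your contrapositive instead assumes only that $U\ni 0$ fails to be a neighborhood of $0$ and aims to produce a sequence converging to $0$ inside $G\setminus U$; succeeding for every such $U$ would show that every \emph{sequential neighborhood} of $0$ is a neighborhood of $0$, which is exactly the Fr\'{e}chet--Urysohn property at $0$ (and, by homogeneity, everywhere). But $P(G|\{a_{n}\}_{n\in\omega})$ is in general \emph{not} Fr\'{e}chet--Urysohn: by Theorem~\ref{t006} and Corollary~\ref{c001} of this very paper, for any nontrivial $T$-sequence the group contains a closed copy of the Arens space $S_{2}$, so there exist sequential neighborhoods of $0$ that are not neighborhoods of $0$. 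Hence the ``delicate point'' you flag --- controlling the summand count $k_{j}+1$ of the bad points against the growth of the thresholds $l_{k_{j}}$ --- is not a technical difficulty to be beaten by a cleverer diagonalization; it is a genuine obstruction, and no choice of tailored index sequences $(n_{i})$ can produce the desired $x_{j}\to 0$ in general.

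The repair must exploit sequential openness of $U$ at points \emph{other than} $0$, equivalently sequential closedness of $F=G\setminus U$ at arbitrary points, and that is exactly what the paper's proof does. Assuming $F$ sequentially closed with $0\in\overline{F}\setminus F$, it inductively builds indices $n_{0}, n_{1}, \ldots$ with $F\cap\sum_{i\leq m}A_{n_{i}}^{\ast}=\emptyset$. If no admissible $n_{m+1}$ existed, one obtains $x_{k}\in A_{n_{0}}^{\ast}+\ldots+A_{n_{m}}^{\ast}$ and $y_{k}\in A_{k}^{\ast}$ with $x_{k}+y_{k}\in F$; passing to a subsequence (each of the $m+1$ summand sequences is either eventually constant or has indices tending to infinity, hence converges to $0$) gives $x_{k}\to x\in\sum_{i\leq m}A_{n_{i}}^{\ast}$ and $y_{k}\to 0$, so $x_{k}+y_{k}\to x$, and sequential closedness forces $x\in F$, contradicting the disjointness already achieved. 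Note that this limit $x$ is in general not $0$: this is precisely where the argument uses more than the failure of the sequential-neighborhood property at the identity, and why the paper's \emph{positive} construction of an entire basic neighborhood $\sum_{m\in\omega}A_{n_{m}}^{\ast}\subset U$ (legitimate by Theorem~\ref{t00}) succeeds where your single-sequence construction at $0$ cannot.
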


\begin{proof}
Assume the contrary. Then there is a sequentially closed subset $F$ such that $0\in \overline{F}\setminus F$. In order to obtain a contradiction, we shall construct inductively a neighborhood $\sum_{m\in\omega}A_{n_{m}}^{\ast}$ of $0$ such that $F\cap \sum_{m\in\omega}A_{n_{m}}^{\ast}=\emptyset$. Because $0\not\in F$ and $F$ is sequentially closed, there exists $n_{0}\in\omega$ satisfying $F\cap A_{n_{0}}^{\ast}=\emptyset$. Assume that $n_{0}, \ldots, n_{m}\in\omega$ have been chosen such that $F\cap \sum_{i\leq m}A_{n_{i}}^{\ast}=\emptyset$. Suppose that, for each $k\in\omega$, we have $$F\cap (A_{n_{0}}^{\ast}+\ldots+A_{n_{m}}^{\ast}+A_{k}^{\ast})\neq\emptyset.$$ Therefore, we can take sequences $\{x_{k}\}_{k\in\omega}$ and $\{y_{k}\}_{k\in\omega}$ such that $x_{k}\in A_{n_{0}}^{\ast}+\ldots+A_{n_{m}}^{\ast}$, $y_{k}\in A_{k}^{\ast}$ and $x_{k}+y_{k}\in F$ for each $k\in\omega$. On passing to subsequence, without loss of generality, we may assume that the sequence $\{x_{k}\}_{k\in\omega}$ converges to some $x\in\sum_{i\leq m}A_{n_{i}}^{\ast}$. Since $\{y_{k}\}_{k\in\omega}$ converges to $0$, the sequence $\{x_{k}+y_{k}\}_{k\in\omega}$ converges to $x$. Since $x_{k}+y_{k}\in F$ for each $k\in\omega$ and $F$ is sequentially closed, it follows that $x\in F$, which is a contradiction with $F\cap \sum_{i\leq m}A_{n_{i}}^{\ast}=\emptyset$. Therefore, there is $n_{m+1}\in\omega$ such that $F\cap \sum_{i\leq m+1}A_{n_{i}}^{\ast}=\emptyset$.
\end{proof}

Note that $A(n, 0)$ is the sum of $n+1$ copies of $A_{0}^{\ast}=\{a_{n}: n\in\omega\}\cup\{0\}$, and $A(-1, 0)=\{0\}$.

\begin{theorem}\label{t006}
If $\{a_{n}\}_{n\in\omega}$ is a nontrivial $T$-sequence in $G$, then the paratopological group $P(G|\{a_{n}\}_{n\in\omega})$ contains a closed subspace homeomorphic to $S_{2}$.
\end{theorem}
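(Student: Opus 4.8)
I would prove that $P(G|\{a_n\})$ contains a closed copy of the Arens space $S_2$ by exhibiting its canonical double-array of points directly from the sequence $\{a_n\}$ and its partial sums. Recall the description: $S_2$ has a top point $\infty$, a layer of intermediate points $\{x_n\}$, and a double-indexed array of isolated points $\{x_{n,m}\}$, where each $x_n$ is the limit of $\{x_{n,m}\}_m$ but $\infty$ is \emph{not} directly approachable by any single horizontal sequence. The natural candidates here are $\infty = 0$, the intermediate points $x_n = a_n$ (the terms of the defining sequence, which converge to $0$ by hypothesis), and the isolated points $x_{n,m} = a_n + a_m$ for $m$ large, since $\{a_m\}_m \to 0$ forces $a_n + a_m \to a_n$ for each fixed $n$. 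The sums $a_n + a_m$ are the first nontrivial elements of $A(1,0)$, so they live in a neighborhood we understand via Theorem~\ref{t00}.

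\textbf{Key steps.} First I would establish that the set
$$K=\{0\}\cup\{a_n:n\in\omega\}\cup\{a_n+a_m: n,m\in\omega\}$$
(after passing to a subsequence of $\{a_n\}$ to guarantee all these elements are distinct, which is possible since $\{a_n\}$ is nontrivial) carries exactly the $S_2$-topology as a subspace. The convergences $a_m\to 0$ and $a_n+a_m\to a_n$ (for fixed $n$) are immediate from joint continuity of addition in $P(G|\{a_n\})$. Second, and this is the decisive point, I would verify that $0$ is \emph{not} a sequential limit of the isolated points along any diagonal selection, i.e. that the array genuinely realizes the $S_2$-topology rather than the sequential fan $S_\omega$; equivalently, each basic neighborhood $\sum_{i\in\omega}A_{n_i}^{\ast}$ of $0$ that misses the layer $\{a_n\}$ appropriately must contain whole tails of the rows, giving $0$ the Arens-space neighborhood structure. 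Third, I would show $K$ is \emph{closed} in $P(G|\{a_n\})$: since this group is sequential by Theorem~\ref{t001}, it suffices to show $K$ is sequentially closed, i.e. that any sequence from $K$ converging in $G$ has its limit in $K$. A convergent sequence drawn from the array $\{a_n+a_m\}$ either stabilizes in one row (limit $a_k\in K$), or spreads across infinitely many rows; in the latter case one uses the $PT$-sequence criterion (Theorem~\ref{th003}) to control $A(1,m)$ and show the only possible limit is $0\in K$.

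\textbf{The main obstacle.} The hard part will be the third step, ruling out spurious limit points. A sequence of the form $a_{n_k}+a_{m_k}$ with both indices tending to infinity converges to $0$, which is fine, but I must exclude that some such sequence converges to a point \emph{outside} $K$ — this is exactly where the $T$-sequence hypothesis (not merely $PT$) is used, via the convergence-structure results of \cite{PZ}, to pin down which sums can accumulate. Concretely, I expect to invoke Theorem~\ref{th003}: for the chosen subsequence and $k=1$, for every $g\notin\{0,a_n,a_n+a_m\}$ there is an $m$ with $g\notin A(1,m)$, so $g$ has a neighborhood meeting $K$ in a finite set and cannot be a limit. Assembling these observations, the subspace topology on $K$ matches the $S_2$ definition point-for-point and $K$ is closed, completing the proof.
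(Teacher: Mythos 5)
Your candidate set fails at the decisive point, and the failure is visible inside your own write-up. You take the isolated points to be $x_{n,m}=a_{n}+a_{m}$, and in your ``main obstacle'' paragraph you observe that a diagonal sequence $a_{n_{k}}+a_{m_{k}}$ with both indices tending to infinity converges to $0$, calling this ``fine.'' It is not fine: it is fatal. In the Arens space $S_{2}$, the top point $\infty$ is \emph{not} the limit of any sequence of isolated points $x_{n,m}$ with the first indices going to infinity (a basic neighborhood of $\infty$ may delete finitely many rows entirely and take, in each remaining row, a tail avoiding any prescribed single point of that row). But in your set $K=\{0\}\cup\{a_{n}\}\cup\{a_{n}+a_{m}\}$, joint continuity of addition in $P(G|\{a_{n}\}_{n\in\omega})$ forces $a_{n_{k}}+a_{m_{k}}\to 0$ whenever $n_{k},m_{k}\to\infty$, so $0$ \emph{is} a limit of isolated points of $K$. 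No passage to a subsequence can repair this, since the diagonal convergence persists for every subsequence; hence $K$ never carries the $S_{2}$ topology, and your second ``key step'' (verifying that $0$ has the Arens neighborhood structure in $K$) cannot be carried out. Your appeal to Theorem~\ref{th003} with $k=1$ to handle limits outside $K$ addresses a side issue and does not touch this obstruction. (There is also a smaller unproved claim, that $\{a_{n}+a_{m}\}$ can be made injective by thinning $\{a_{n}\}$, but it is moot given the main failure.)

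This obstruction is precisely why the paper's proof is as elaborate as it is: instead of $a_{n}+a_{m}$, it inductively builds points $b(k,m)\in A(k,0)\setminus A(k-1,0)$ (elements genuinely requiring $k+1$ summands from $A_{0}^{\ast}$) with $b(k,m)\to 0$ in $m$, and takes the isolated points to be $c(k,m)=a_{k}+b(k,m)$, the rows being separated by a disjoint family $\{a_{n}+U_{n}\}$ obtained from the nontrivial $T$-sequence hypothesis. The stratification condition $b(k,m)\notin A(k-1,0)$ is exactly what blocks the diagonal: if an injective sequence $\{c(k_{i},m_{i})\}$ converged to $0$, one constructs a neighborhood $\sum_{m\in\omega}A_{n_{m}}^{\ast}$ of $0$ missing every $b(k_{i},m_{i})$, using that the finite partial sums $\sum_{i\leq l}A_{n_{i}}^{\ast}$ sit inside $A(l,0)$ (together with a compactness argument), whereas $b(k_{l+1},m_{l+1})$ lies outside $A(l,0)$. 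Closedness of the resulting copy is then obtained from \cite[Theorem 2.3.8]{PZ} via $T(G|\{a_{n}\}_{n\in\omega})\subset P(G|\{a_{n}\}_{n\in\omega})$, not by the ad hoc sequential-closure analysis you sketch. In short, your approach is not a gap that can be patched locally; the naive double array must be replaced by a depth-stratified one, which is the heart of the paper's argument.
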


\begin{proof}
Without loss of generality, we may assume that $\{a_{n}\}_{n\in\omega}$ is an injective subsequence. Since $\{a_{n}\}_{n\in\omega}$ is a nontrivial $T$-sequence in $G$, we can find a sequence $\{U_{n}: n\in\omega\}$ of neighborhoods of $0$ in $P(G|\{a_{n}\}_{n\in\omega})$ such that the family $\{a_{n}+U_{n}: n\in\omega\}$ is disjoint. We claim that, for every $n\in\omega$, it can take an injective sequence $\{b(n, m)\}_{m\in\omega}$ satisfying the following conditions:

\smallskip
(1) $b(n, m)\in U_{n}$ for sufficiently large $m\in\omega$;

\smallskip
(2) $\{b(n, m)\}_{m\in\omega}$ converges to $0$;

\smallskip
(3) $b(n, m)\in A(n, 0)\setminus A(n-1, 0)$ for each $m\in\omega$.

Indeed, put $b(0, m)=a_{m}$ for each $m\in\omega$. Suppose that, for some $k\in\omega$, the sequence $\{b(k, m)\}_{m\in\omega}$ has been found. Fix $m\in\omega$. Assume that, for each $n\in\omega$, there is $x_{n}\in A_{n}^{\ast}$ such that $b(k, m)+x_{n}\in A(k, 0)$. Choose sequences $\{y_{0}(n)\}_{n\in\omega}, \ldots, \{y_{k}(n)\}_{n\in\omega}$ in $A_{0}^{\ast}$ so that for each $n\in\omega$ we have $$b(k, m)+x_{n}=y_{0}(n)+\ldots+y_{k}(n).$$
Without loss of generality, we may suppose that for each $i\leq k$ the sequence $\{y_{i}(n)\}_{n\in\omega}$ converges to some element $y_{i}\in A_{0}^{\ast}$. In order to obtain a contradiction, we divide the proof into the following two cases.

\smallskip
(a) $y_{i}=0$ for at least one value $i\leq k$.

\smallskip
Because $\{x_{n}\}_{n\in\omega}$ converges to $0$, it follows that $b(k, m)\in A(k-1, 0)$, which is a contradiction with the induction hypothesis.

\smallskip
(b) $y_{i}\neq0$ for each $i\leq k$.

\smallskip
Because $y_{i}(n)\in A_{0}^{\ast}$, we may suppose that the sequences $\{y_{0}(n)\}_{n\in\omega}, \ldots, \{y_{k}(n)\}_{n\in\omega}$ are constant, which implies that $\{x_{n}\}_{n\in\omega}$ is constant. This is a contradiction.

Therefore, there exists $x(m)\in A_{m}^{\ast}$ so that $b(k, m)+x(m)\not\in A(k, 0)$. Let $b(k+1, m)=b(k, m)+x(m)$. Obviously, $b(k+1, m)\in A(k+1, 0)\setminus A(k, 0)$. Clearly, $\{b(k+1, m)\}_{m\in\omega}$ converges to $0$.

Let $c_{0}=0$. For each $n\in\omega$, without loss of generality, we may assume that $b(n, m)\in U_{n}$ for each $m\in\omega$. For each $k, m\in\omega$, let $c(k, m)=a_{k}+b(k, m)$. Put $$C=\{c_{0}\}\cup\{a_{n}: n\in\omega\}\cup\{c(k, m): k, m\in\omega\}.$$ We claim that $C$ is closed and homeomorphic to $S_{2}$.

Indeed, since $\{a_{n}\}_{n\in\omega}$ is a nontrivial $T$-sequence and $T(G|\{a_{n}\}_{n\in\omega})\subset P(G|\{a_{n}\}_{n\in\omega})$, it follows from \cite[Theorem 2.3.8]{PZ} that $C$ is closed in $G$. It suffices to prove that $C$ is homeomorphic to $S_{2}$. Suppose that there exists an injective sequence $\{c(k_{i}, m_{i})\}_{i\in\omega}$ converging to $0$ in $G$.
Obviously, the sequence $\{b(k_{i}, m_{i})\}_{i\in\omega}$ converges to $0$. In order to obtain a contradiction, we shall find a neighborhood $U$ of $0$ such that $b(k_{i}, m_{i})\not\in U$ for each $i\in\omega$. Indeed, we shall construct $U$ inductively in the form $\sum_{m\in\omega}A_{n_{m}}^{\ast}$. Let $n_{0}=0$ and assume that $n_{0}, \ldots, n_{l}\in\omega$ have been taken such that $$\{b(k_{0}, m_{0}), \ldots, b(k_{l}, m_{l})\}\cap (A_{n_{0}}^{\ast}+\ldots+A_{n_{l}}^{\ast})=\emptyset.$$ Suppose that, for each $k\in\omega$, ones have $$\{b(k_{0}, m_{0}), \ldots, b(k_{l}, m_{l}), b(k_{l+1}, m_{l+1})\}\cap (A_{n_{0}}^{\ast}+\ldots+A_{n_{l}}^{\ast}+A_{k}^{\ast})\neq\emptyset.$$ By the compactness of $\sum_{i\leq m}A_{n_{i}}^{\ast}$ in $P(G|\{a_{n}\}_{n\in\omega})$, it follows that $$\{b(k_{0}, m_{0}), \ldots, b(k_{l}, m_{l}), b(k_{l+1}, m_{l+1})\}\cap (A_{n_{0}}^{\ast}+\ldots+A_{n_{l}}^{\ast})\neq\emptyset.$$ Hence $b(k_{l+1}, m_{l+1})\in \sum_{i\leq l}A_{n_{i}}^{\ast}\subset A(l, 0)$, which is a contradiction with (3) of the construction of the sequence $\{b(n, m)\}_{m\in\omega}$. Therefore, there exists $n_{l+1}\in\omega$ such that $$\{b(k_{0}, m_{0}), \ldots, b(k_{l}, m_{l}), b(k_{l+1}, m_{l+1})\}\cap (A_{n_{0}}^{\ast}+\ldots+A_{n_{l}}^{\ast}+A_{n_{l+1}}^{\ast})=\emptyset.$$

Therefore, $C$ is homeomorphic to $S_{2}$.
\end{proof}

Since $S_{2}$ is a sequential non-Fr\'{e}chet-Urysohn space, we have the following corollary.

\begin{corollary}\label{c001}
If $\{a_{n}\}_{n\in\omega}$ is a nontrivial $T$-sequence in $G$, then the paratopological group $P(G|\{a_{n}\}_{n\in\omega})$ is not a Fr\'{e}chet-Urysohn space.
\end{corollary}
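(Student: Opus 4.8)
The plan is to derive the corollary directly from Theorem~\ref{t006}, using the elementary fact that the Fr\'{e}chet--Urysohn property is inherited by arbitrary subspaces. First I would apply Theorem~\ref{t006} to fix a closed subspace $C$ of $P(G|\{a_{n}\}_{n\in\omega})$ that is homeomorphic to the Arens' space $S_{2}$. Once this closed copy of $S_{2}$ is in hand, the remaining argument splits into two completely routine observations, one about $S_{2}$ itself and one about the class of Fr\'{e}chet--Urysohn spaces.

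The second step is to confirm that $S_{2}$ is not Fr\'{e}chet--Urysohn. Using the explicit description of $S_{2}$ from Section~2, write $A=\{x_{n,m}: m, n\in\omega\}$ for its set of isolated points. Then $\infty\in\overline{A}$, since every basic neighborhood $\{\infty\}\cup\bigcup_{n>k}V_{n}$ of $\infty$ meets $A$. On the other hand, no sequence of distinct points of $A$ converges to $\infty$: if some ``column'' $\{x_{n,m}: m\in\omega\}$ contains infinitely many terms of the sequence, then a subsequence converges to $x_{n}\neq\infty$; otherwise each column contains only finitely many terms, and then one can choose the neighborhoods $V_{n}$ of $x_{n}$ so small that $\{\infty\}\cup\bigcup_{n}V_{n}$ omits cofinitely many terms of the sequence. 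In either case the sequence fails to converge to $\infty$, so the pair $(A,\infty)$ witnesses that $S_{2}$ is not Fr\'{e}chet--Urysohn (while it is sequential, as is standard).

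The third step records the hereditary argument. If $Y$ is a subspace of a space $X$, $B\subset Y$ and $y$ lies in the closure of $B$ in $Y$, then $y$ lies in the closure of $B$ in $X$, and any sequence in $B$ converging to $y$ in $X$ converges to $y$ in $Y$ as well; hence the Fr\'{e}chet--Urysohn property passes to every subspace. Therefore, were $P(G|\{a_{n}\}_{n\in\omega})$ Fr\'{e}chet--Urysohn, its subspace $C\cong S_{2}$ would be Fr\'{e}chet--Urysohn too, contradicting the second step. This forces $P(G|\{a_{n}\}_{n\in\omega})$ to fail the Fr\'{e}chet--Urysohn property, which is exactly the assertion.

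Since all the genuine work, namely the closed embedding of $S_{2}$, has already been carried out in Theorem~\ref{t006}, there is essentially no obstacle here. The only mildly delicate point is verifying from the explicit neighborhood basis of $S_{2}$ that the single point $\infty$ defeats the Fr\'{e}chet--Urysohn condition, and even that reduces to the dichotomy between a column absorbing infinitely many terms and every column absorbing only finitely many.
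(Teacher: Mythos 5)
Your proposal is correct and follows the paper's own route exactly: the paper derives Corollary~\ref{c001} in one line from Theorem~\ref{t006} together with the standard facts that $S_{2}$ is not Fr\'{e}chet--Urysohn and that the Fr\'{e}chet--Urysohn property is hereditary, which are precisely your steps two and three (you merely spell out the verification for $S_{2}$ that the paper takes as known). The only microscopic point worth noting is that a sequence in the set $A$ of isolated points witnessing Fr\'{e}chet--Urysohn need not be injective, but since $\infty\notin A$ and $S_{2}$ is Hausdorff, any such convergent sequence has an injective subsequence, so your dichotomy for distinct-term sequences suffices.
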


By \cite[Theorem 2.3]{L2006}, we have the following corollary.

\begin{corollary}
If $\{a_{n}\}_{n\in\omega}$ is a nontrivial $T$-sequence in $G$, then the paratopological group $P(G|\{a_{n}\}_{n\in\omega})$ contains a closed subspace homeomorphic to $S_{\omega}$.
\end{corollary}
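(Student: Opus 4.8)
The plan is to combine the sequentiality of $P(G|\{a_{n}\}_{n\in\omega})$ with the closed copy of the Arens space produced just above, and then to upgrade this to a sequential fan. By Theorem~\ref{t001} the group $P(G|\{a_{n}\}_{n\in\omega})$ is sequential, and by Theorem~\ref{t006} it contains a closed subspace homeomorphic to $S_{2}$; since $S_{2}$ is a sequential non-Fr\'{e}chet-Urysohn space, Corollary~\ref{c001} records that the whole group is sequential but not Fr\'{e}chet-Urysohn. The first route, which is the one indicated in the statement, is then to invoke \cite[Theorem 2.3]{L2006}, which guarantees that a sequential group of this type that fails to be Fr\'{e}chet-Urysohn must contain a closed copy of $S_{\omega}$. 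The essential point is that, unlike in general topology (where $S_{2}$ admits no copy of $S_{\omega}$ at all, since it has no ``fan point''), the homogeneity supplied by the group operation forces the non-first-countable point witnessed by the $S_{2}$-copy to sit at the top of a genuine fan.

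If one prefers to build the fan by hand rather than quote \cite[Theorem 2.3]{L2006}, I would reuse the data manufactured inside the proof of Theorem~\ref{t006}. There, for each $k\in\omega$ one obtains an injective sequence $\{b(k,m)\}_{m\in\omega}$ with $b(k,m)\to 0$ and with $b(k,m)\in A(k,0)\setminus A(k-1,0)$, so that the columns land in the pairwise disjoint ``annuli'' $A(k,0)\setminus A(k-1,0)$. I would set $$D=\{0\}\cup\bigcup_{k,m\in\omega}\{b(k,m)\}$$ and claim that $D$ is the desired closed copy of $S_{\omega}$. Each column converges to the common limit $0$, the columns are pairwise disjoint, and collapsing every column onto $0$ is exactly the step that manufactures a fan point out of the three-layer Arens data $c(k,m)=a_{k}+b(k,m)$; this is precisely where the group structure, absent in the purely topological $S_{2}$, is used.

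It then remains to check that $D$ carries exactly the fan topology and is closed. That $0$ is a genuine fan point, i.e. that no sequence meeting infinitely many columns converges to $0$, is already contained in the inductive argument of Theorem~\ref{t006}, which builds a neighborhood of $0$ of the form $\sum_{l}A_{n_{l}}^{\ast}$ missing any prescribed would-be diagonal; the same induction shows each neighborhood of $0$ in $D$ contains a tail of every column, so the subspace filter at $0$ is the fan filter. The main obstacle, where I expect the real work to lie, is ruling out that some diagonal $\{b(k_{i},m_{i})\}_{i\in\omega}$ through infinitely many columns converges to a \emph{nonzero} point: this is what is needed both for every $b(k,m)$ to be isolated in $D$ and for $D$ to be sequentially closed, hence closed since the space is sequential. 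I would attack it using the compactness of the partial sums $\sum_{i\leq m}A_{n_{i}}^{\ast}$ in $P(G|\{a_{n}\}_{n\in\omega})$ together with the disjoint annulus placement $b(k,m)\in A(k,0)\setminus A(k-1,0)$, in the spirit of \cite[Theorem 2.3.8]{PZ}; packaging exactly this verification is what makes the shortcut through \cite[Theorem 2.3]{L2006} attractive.
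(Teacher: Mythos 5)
Your primary route---sequentiality from Theorem~\ref{t001}, the closed copy of $S_{2}$ from Theorem~\ref{t006} (with Corollary~\ref{c001} recording the failure of the Fr\'{e}chet-Urysohn property), followed by an appeal to \cite[Theorem 2.3]{L2006}---is exactly the paper's proof, which consists of precisely this citation. The hand-built fan $D=\{0\}\cup\{b(k,m): k,m\in\omega\}$ you sketch afterwards is a supplementary alternative the paper does not attempt, and since you correctly flag (without closing) the remaining verification that no diagonal through infinitely many columns converges to a nonzero point, the corollary should rest on the citation route, where it stands complete.
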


Finally, we discuss the transversality and $T_{1}$-complementary of any infinite group determined by a $PT$-sequence.

\begin{theorem}
Each Hausdorff paratopological group topology on infinite group determined by $PT$-sequence is transversal.
\end{theorem}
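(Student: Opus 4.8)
The plan is to reduce the statement to the $PT$-subset criterion for transversality established above, and then to manufacture the required transversal sequence from the closed copy of $S_{2}$ produced in Theorem~\ref{t006}. Write $\tau=P(G\,|\,\{a_{n}\}_{n\in\omega})$ and assume $\tau$ is Hausdorff. By Theorem~\ref{t0000} the sequence $\{a_{n}\}_{n\in\omega}$ is then a nontrivial $T$-sequence, so $T(G\,|\,\{a_{n}\}_{n\in\omega})\subset\tau$ and, by Theorem~\ref{t006}, $\tau$ contains a closed subspace homeomorphic to $S_{2}$. By the Corollary preceding the present theorem it suffices to exhibit a neighborhood $U$ of $0$ in $\tau$ and a $PT$-subset $V$ with $U\cap V=\{0\}$; and by the characterization of $PT$-subsets above, $V$ is a $PT$-subset as soon as I produce a sequence $\{c_{n}\}_{n\in\omega}$ of distinct elements of $G$ with $\bigcup_{k\in\omega}X(c_{0},\dots,c_{k})\subset V$. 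Thus everything reduces to constructing $\{c_{n}\}_{n\in\omega}$ together with a strictly increasing sequence $n_{0}<n_{1}<\cdots$ so that, writing $U=\sum_{m\in\omega}A_{n_{m}}^{\ast}$ and $V=\bigcup_{k\in\omega}X(c_{0},\dots,c_{k})$, one has $U\cap V=\{0\}$; then $U$ is a genuine neighborhood of $0$ in $\tau$ by Theorem~\ref{t00}, and the Corollary yields a transversal paratopological group topology, in fact the one determined by the $PT$-sequence $\{c_{n}\}_{n\in\omega}$.

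For the generators I would take the ``diagonal'' increments of the $S_{2}$-copy built in Theorem~\ref{t006}: recall that one finds there, for each $n$, an injective sequence $\{b(n,m)\}_{m\in\omega}$ converging to $0$ with the escape property $b(n,m)\in A(n,0)\setminus A(n-1,0)$, and that no injective diagonal sequence $\{b(n_{i},m_{i})\}$ converges to $0$. Running an induction parallel to that of Theorem~\ref{t006}, I would produce the increments $c_{n}=b(n,m_{n})$ and the protecting indices $n_{m}$ simultaneously, maintaining after every stage the invariant
$$X(c_{0},\dots,c_{p})\cap\bigl(A_{n_{0}}^{\ast}+\cdots+A_{n_{q}}^{\ast}\bigr)=\{0\}. \qquad (\ast)$$
When I keep $p$ fixed and append a new tail $A_{n_{q+1}}^{\ast}$, preserving $(\ast)$ amounts to finding a tail of $\{a_{n}\}$ disjoint from $\bigl(X(c_{0},\dots,c_{p})\setminus\{0\}\bigr)-\bigl(A_{n_{0}}^{\ast}+\cdots+A_{n_{q}}^{\ast}\bigr)$; since the partial sum $A_{n_{0}}^{\ast}+\cdots+A_{n_{q}}^{\ast}$ is compact in $\tau$ (as in the proof of Theorem~\ref{t006}) and $\{a_{n}\}$ converges to $0$, the $T_{1}$-ness of $\tau$ reduces this to a finite avoidance, and a sufficiently large $n_{q+1}$ works. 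When I keep the indices fixed and append a new generator $c_{p+1}$, the escape property together with the compactness of the fixed partial sums lets me choose $m_{p+1}$ so large that the finitely many new points $w+x c_{p+1}$, with $w\in X(c_{0},\dots,c_{p})$ and $x\in\{1,\dots,p+2\}$, all avoid the current partial sum, again preserving $(\ast)$.

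Passing to the limit, every nonzero element of $V$ lies in some $X(c_{0},\dots,c_{p})$ and every element of $U$ lies in some partial sum $A_{n_{0}}^{\ast}+\cdots+A_{n_{q}}^{\ast}$, so by monotonicity the invariant $(\ast)$ at the stage containing both data gives $U\cap V=\{0\}$, and the Corollary completes the argument. The step I expect to be the real obstacle is preserving $(\ast)$ when a new generator is adjoined: here one must guarantee that the bounded-coefficient combinations $x c_{p+1}$ themselves---points that are being pushed toward $0$---do not fall into the partial sums, and this cannot be arranged by genericity alone when $G$ is countable. The escape property $c_{p+1}\in A(p+1,0)\setminus A(p,0)$ and the compactness of the finite partial sums $\sum_{i\le m}A_{n_{i}}^{\ast}$ in $\tau$ are precisely the tools that convert this infinite avoidance into a finite one, exactly as in the closed-$S_{2}$ argument of Theorem~\ref{t006}; this is the technical heart of the proof. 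Alternatively one could argue through Theorem~\ref{tttt1}, showing $\tau\nleqslant\mathcal{PM}_{G}$ directly, but the constructive route above is more transparent and moreover yields a transversal topology that is itself determined by a $PT$-sequence.
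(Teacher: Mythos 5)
Your reduction to the $PT$-subset corollary is legitimate, and your step of appending a new tail $A_{n_{q+1}}^{\ast}$ is sound: there the points to be avoided converge to a limit $w\neq 0$ lying outside the compact (hence, by Hausdorffness, closed) partial sum $S=A_{n_{0}}^{\ast}+\cdots+A_{n_{q}}^{\ast}$, which is exactly the avoidance argument of Theorems~\ref{t001} and~\ref{t006}, using only joint continuity of addition. The gap is the step you yourself flag, and the tools you name do not close it. When you adjoin $c_{p+1}=b(p+1,m)$, the case $w=0$, $x\geq 2$ of your invariant $(\ast)$ requires $x\,b(p+1,m)\notin S\setminus\{0\}$ for some large $m$. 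Here the points $x\,b(p+1,m)$ converge to $0\in S$, so closedness of $S$ gives nothing, and the escape property $b(p+1,m)\in A(p+1,0)\setminus A(p,0)$ controls only the multiple $x=1$: it yields $c_{p+1}\notin S$ whenever $q\leq p$ (since $S\subset A(q,0)$), but says nothing about $2c_{p+1},\dots,(p+2)c_{p+1}$, which can collapse in torsion groups or re-enter $S$. They can genuinely re-enter: on $G=\mathbb{Z}$ take the $T$-sequence $a_{2k}=3^{k}$, $a_{2k+1}=2\cdot 3^{k}$ (convergent to $0$ in the $3$-adic topology). Then $2a_{2k}=a_{2k+1}$ and $2a_{2k+1}=a_{2k+2}+a_{2k}$, so doubling a sum of $r$ elements of a deep tail is again a sum of at most $2r$ elements of a deep tail; since $b(p+1,m)$ is a sum of $p+2$ elements with indices at least $m$, the element $2\,b(p+1,m)$ lies in $S$ for every sufficiently large $m$ as soon as $q+1\geq 2(p+2)$ (the tails being nested). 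Thus at stages where the number of chosen tails is large compared with the number of chosen generators, the generator step fails for every admissible $m$: there is no ``finite avoidance'' to be had. Theorem~\ref{t006} never meets this difficulty, because its inductions always pit finitely many fixed points against freshly chosen tails, never bounded multiples of a freshly chosen point against a fixed compact set. Repairing your induction would require a multiplicative strengthening of the escape property together with an explicit scheduling constraint keeping $q$ small relative to $p$, neither of which you establish.

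For contrast, the paper's proof sidesteps the construction entirely: by Theorem~\ref{t0000}, Hausdorffness of $\tau=P(G|\{a_{n}\}_{n\in\omega})$ forces $\{a_{n}\}_{n\in\omega}$ to be a $T$-sequence; a cited result of Protasov and Zelenyuk then gives a group topology $\eta$ transversal to $T(G|\{a_{n}\}_{n\in\omega})$; and since $T(G|\{a_{n}\}_{n\in\omega})\subset P(G|\{a_{n}\}_{n\in\omega})$, the join $\eta\vee\tau$ remains discrete, so $\eta$ is transversal to $\tau$. Your constructive route, if completed, would add that the transversal topology is itself determined by a $PT$-sequence, but the paper already obtains that (for countable $G$) in Theorem~\ref{t009} by the same outsourcing; as written, your key inductive step is unproved.
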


\begin{proof}
Let $(G, \tau)$ be determined by a $PT$-sequence $\{a_{n}\}_{n\in\omega}$. By Theorem~\ref{t0000}, $\{a_{n}\}_{n\in\omega}$ is a $T$-sequence. Then it follows from \cite[Corollary 2.4.10]{PZ} that $T(G|\{a_{n}\}_{n\in\omega})$ has a transversal group topology $\eta$. Since $T(G|\{a_{n}\}_{n\in\omega})\subset P(G|\{a_{n}\}_{n\in\omega})=\tau$, it follows that $(G, \tau)$ is a transversal paratopological group topology to $\eta$.
\end{proof}

\begin{proposition}\label{p003}\cite[Proposition 2.4]{MY2002}
Assume that $\sigma$ and $\tau$ are $T_{1}$-independent Hausdorff topologies on a set $G$.
If the topology $\sigma$ is sequential, then the space $(G, \sigma)$ is countably compact and does not
contain non-trivial convergent sequences.
\end{proposition}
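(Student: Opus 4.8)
The engine of the proof is the following reformulation of $T_{1}$-independence: a subset $F\subseteq G$ is closed in both $\sigma$ and $\tau$ exactly when $G\setminus F$ lies in $\sigma\cap\tau$, and since $\sigma\cap\tau$ is the cofinite topology this happens if and only if $F$ is finite or $F=G$. Thus \emph{no infinite, co-infinite subset of $G$ is simultaneously $\sigma$-closed and $\tau$-closed}; call this $(\ast)$. The whole strategy is to manufacture, from the hypothetical bad object, an infinite co-infinite set closed in both topologies, contradicting $(\ast)$. Two elementary facts feed the machine. First, since $\sigma$ is sequential and Hausdorff, if $x_{n}\to x$ in $\sigma$ with the $x_{n}$ distinct and $\ne x$ and $S=\{x_{n}:n\in\omega\}$, then a routine check shows $A\cup\{x\}$ is sequentially closed, hence $\overline{A}^{\sigma}=A\cup\{x\}$ for every infinite $A\subseteq S$; in particular every $A\cup\{x\}$ is $\sigma$-closed. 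Second, if $D$ is $\sigma$-closed and $\sigma$-discrete, then every subset of $D$ is $\sigma$-closed.

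For countable compactness I would argue by contradiction. A $T_{1}$ space that is not countably compact contains a countably infinite $\sigma$-closed discrete set $D$, and by the second fact every subset of $D$ is $\sigma$-closed. By $(\ast)$ no infinite co-infinite subset of $D$ is $\tau$-closed, so every infinite subset of $D$ has a $\tau$-accumulation point. If all these accumulation points lie in $D$, then $D$ is $\tau$-closed, whence $D=G$ by $(\ast)$; but then $\sigma$ is discrete and $\tau=\sigma\cap\tau$ is the cofinite topology, which is not Hausdorff on an infinite set, a contradiction. Hence $D$ has a $\tau$-accumulation point $p\notin D$, and the goal is to promote this to a $\tau$-convergent subsequence of $D$ with limit $p$: once an infinite $A\subseteq D$ satisfies $\overline{A}^{\tau}=A\cup\{p\}$, the set $A\cup\{p\}$ is closed in both $\sigma$ and $\tau$ (note $p\notin\overline{A}^{\sigma}=A$), and choosing $A$ co-infinite contradicts $(\ast)$.

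For the absence of nontrivial convergent sequences I would start from a sequence $x_{n}\to x$ as above and examine how the range $S$ sits in $(G,\tau)$. If $x$ lies in the $\tau$-closure of every infinite subset of $S$, then $S$ $\tau$-converges to $x$, so by Hausdorffness $\overline{S}^{\tau}=S\cup\{x\}$; then for any infinite co-infinite $A\subseteq S$ the set $A\cup\{x\}$ is closed in both topologies, contradicting $(\ast)$. Otherwise I pass to an infinite $A\subseteq S$ with $x\notin\overline{A}^{\tau}$ and, relabelling, assume $x\notin\overline{S}^{\tau}$. Now I seek an infinite co-infinite $\tau$-closed $B\subseteq S$: since $x\notin\overline{B}^{\tau}$, the set $B\cup\{x\}$ is then $\tau$-closed as well as $\sigma$-closed, again contradicting $(\ast)$. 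Failing that, every infinite subset of $S$ has a $\tau$-accumulation point, and the clean instance is when $S$ $\tau$-converges to some $p\ne x$, where $A\cup\{x,p\}$ is closed in both $\sigma$ and $\tau$ for every infinite co-infinite $A\subseteq S$.

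The main obstacle, shared by both parts, is this last step: exhibiting a single set that is at once $\sigma$-closed and $\tau$-closed, so that $(\ast)$ can fire. The two closure operators interfere — $\tau$-closing a $\sigma$-closed set generally destroys its $\sigma$-closedness, and conversely — so the real difficulty is the relatively-countably-compact case, in which $S$ (or $D$) has a rich $\tau$-closure rather than a single $\tau$-limit. To break this I would exploit the Hausdorffness of $\tau$ together with the countability of $S$ (resp.\ $D$): the aim is to extract an infinite subset that is either $\tau$-closed or $\tau$-convergent to a single point, reducing to one of the clean configurations above. I expect this extraction — forcing a nontrivial $\tau$-convergent subsequence out of a countable relatively countably compact Hausdorff set — to be the crux, and the precise point where the sequentiality of $\sigma$ and the Hausdorffness of $\tau$ must be used in tandem.
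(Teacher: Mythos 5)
Your reduction of $T_{1}$-independence to the principle $(\ast)$ (no infinite, co-infinite set is closed in both topologies) is correct, as are your two auxiliary facts, but the step you yourself flag as the crux is a genuine gap, and it cannot be repaired. From a countably infinite $\sigma$-closed set $D$ (or the range $S$ of a $\sigma$-convergent sequence) that fails to be $\tau$-closed, you try to extract an infinite $A$ with $\overline{A}^{\tau}=A\cup\{p\}$ — a $\tau$-convergent subsequence, or alternatively a $\tau$-closed infinite co-infinite subset. No such extraction is possible in general for countable sets in Hausdorff spaces: think of $\omega$ inside $\beta\omega$, where every infinite subset has closure adding $2^{\mathfrak{c}}$ points and no infinite subset is closed or convergent. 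Worse, in the situation at hand it is precisely what the proposition rules out: the correct conclusion is that the partner topology is countably compact with \emph{no} nontrivial convergent sequences, so a strategy that hinges on promoting a $\tau$-accumulation point to a $\tau$-limit is self-defeating. Note also that you are proving the statement with the roles as misprinted. The paper gives no proof (it cites \cite[Proposition 2.4]{MY2002}), and its own application in Theorem~\ref{t007} — where $\tau=P(G|\{a_{n}\})$ is the sequential topology by Theorem~\ref{t001} and the conclusion is drawn for $(G,\sigma)$ — shows the intended reading: if one topology is sequential, the \emph{other} space is countably compact without nontrivial convergent sequences. Attached to the sequential topology itself, as printed, the conclusion would force $(G,\sigma)$ to be discrete (every set sequentially closed) and hence finite by countable compactness, so the printed hypotheses are unsatisfiable on an infinite set.

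The workable argument runs in the direction opposite to yours: sequentiality is applied \emph{inside} $\tau$-nice sets to manufacture $\sigma$-convergent sequences, not the reverse. If $y_{n}\to y$ nontrivially in $\tau$, split the range into disjoint infinite halves $A_{0},A_{1}$; then $A_{0}\cup\{y\}$ is $\tau$-closed, infinite and co-infinite, hence not $\sigma$-closed by $(\ast)$, so sequentiality of $\sigma$ gives a sequence in $A_{0}\cup\{y\}$ $\sigma$-converging to some $z\notin A_{0}\cup\{y\}$; by Hausdorffness of $\sigma$ it is eventually in $A_{0}$ with infinite range, so one may take it injective with range $B\subseteq A_{0}$. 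Then $B\cup\{y,z\}$ is $\sigma$-closed (compact set $B\cup\{z\}$ plus the closed singleton $\{y\}$) and $\tau$-closed (since $\overline{B}^{\tau}=B\cup\{y\}$), infinite and co-infinite — contradicting $(\ast)$. If $(G,\tau)$ is not countably compact, take $D$ $\tau$-closed discrete, split $D=A_{0}\cup A_{1}$, and run the same extraction: now $B\cup\{z\}$ is already closed in both topologies, because every subset of $D$ is $\tau$-closed. You actually had the key device in hand — your observation that "$A\cup\{x,p\}$ is closed in both $\sigma$ and $\tau$" is exactly the set $B\cup\{y,z\}$ above — but you deployed it backwards, trying to obtain the $\tau$-limit from $\sigma$-data (where extraction fails) instead of obtaining the $\sigma$-limit from $\tau$-data, where sequentiality of $\sigma$ guarantees it.
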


\begin{theorem}\label{t007}
Let $\{a_{n}\}_{n\in\mathbb{N}}$ be a $T$-sequence on infinite group $G$. Then $P(G|\{a_{n}\})$ does not admit a $T_{1}$-complementary Hausdorff paratopological group topology on $G$.
\end{theorem}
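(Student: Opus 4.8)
The plan is to derive a contradiction from the structural restriction that $T_{1}$-independence imposes on a sequential topology, using Proposition~\ref{p003} together with the sequentiality established in Theorem~\ref{t001}. Write $\tau=P(G|\{a_{n}\})$. Since $\{a_{n}\}_{n\in\mathbb{N}}$ is a $T$-sequence, we have $T(G|\{a_{n}\})\subseteq\tau$ with $T(G|\{a_{n}\})$ Hausdorff, so $\tau$ is Hausdorff (cf.\ Theorem~\ref{t0000}); moreover $\{a_{n}\}$ converges to $0$ in $\tau$ by the very definition of $P(G|\{a_{n}\})$. We may assume that $\tau$ is non-discrete, equivalently that $\{a_{n}\}$ is a nontrivial $T$-sequence: otherwise $\tau$ is discrete and, being discrete, cannot be transversal to any topology, so the conclusion holds trivially. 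In the non-discrete case $\{a_{n}\}$ is not eventually $0$, hence $\{a_{n}\}\to 0$ is a nontrivial convergent sequence in $(G,\tau)$.

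Next I would suppose, towards a contradiction, that $\tau$ admits a $T_{1}$-complementary Hausdorff paratopological group topology $\sigma$. By the definition of $T_{1}$-complementarity (intersection equal to the cofinite topology, supremum discrete), $\sigma$ and $\tau$ are in particular $T_{1}$-independent; I only need this half. By Theorem~\ref{t001} the topology $\tau=P(G|\{a_{n}\})$ is sequential. Thus the hypotheses of Proposition~\ref{p003} are met, with $\tau$ playing the role of the sequential topology $\sigma$ there: $\sigma$ and $\tau$ are $T_{1}$-independent Hausdorff topologies on the set $G$ and $\tau$ is sequential.

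Applying Proposition~\ref{p003} then forces $(G,\tau)$ to be countably compact and, crucially, to contain no nontrivial convergent sequences. This directly contradicts the fact recorded in the first paragraph, namely that $\{a_{n}\}$ is a nontrivial sequence converging to $0$ in $\tau$. Hence no such $\sigma$ can exist, which is the assertion of the theorem.

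The only delicate point — and the step I would verify most carefully — is the bookkeeping of which topology plays the sequential role in Proposition~\ref{p003}. It is $\tau=P(G|\{a_{n}\})$ whose sequentiality (Theorem~\ref{t001}) is exploited, so it is $(G,\tau)$ itself, not $(G,\sigma)$, that inherits the ``no nontrivial convergent sequences'' conclusion; since the defining convergent sequence $\{a_{n}\}$ is built into $\tau$, this conclusion is immediately violated, which is exactly what makes the argument close. (One could alternatively route the obstruction through Corollary~\ref{c001}, but the cleanest contradiction is simply the presence of the convergent sequence $\{a_{n}\}$.)
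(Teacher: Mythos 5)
You have applied Proposition~\ref{p003} to the wrong topology, and this is exactly the point you flagged as delicate. The statement of the proposition as transcribed in the paper is indeed garbled (the letter $\sigma$ appears both in the hypothesis and in the conclusion), but the intended statement --- Proposition~2.4 of \cite{MY2002} --- is that if one of two $T_{1}$-independent Hausdorff topologies is sequential, then the \emph{other} one is countably compact and contains no non-trivial convergent sequences. Your reading, in which the sequential topology itself inherits these properties, cannot be correct: in a sequential space with no non-trivial convergent sequences every subset is sequentially closed, hence the space is discrete; an infinite discrete space is not countably compact, and moreover a discrete topology on an infinite set is never $T_{1}$-independent from a Hausdorff topology (the intersection would then be the companion topology itself, which would have to equal the cofinite topology, which is not Hausdorff on an infinite set). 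So under your reading the proposition would assert that its own hypotheses are unsatisfiable, which is not how it is meant, and not how the paper uses it: in the paper's proof of Theorem~\ref{t007}, sequentiality of $\tau=P(G|\{a_{n}\})$ (Theorem~\ref{t001}) is used to conclude that $(G,\sigma)$, the putative complement, is countably compact and has no non-trivial convergent sequences.

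Once the proposition is read correctly, your contradiction evaporates: the non-trivial sequence $a_{n}\to 0$ lives in $\tau$, while the ``no non-trivial convergent sequences'' conclusion applies only to $\sigma$; the set $K=\{0\}\cup\{a_{n}:n\in\mathbb{N}\}$ is closed in $\tau$ but there is no reason for it to be $\sigma$-closed (the sequence need not converge in $\sigma$, unlike the situation in Theorem~\ref{t2} where convergence holds in the join), so no violation of $T_{1}$-independence arises directly. The remaining content of the paper's proof --- which your proposal omits entirely --- is precisely what is needed after this point: using the countable neighborhoods of $0$ in $\tau$ together with $T_{1}$-independence to show that $\sigma$ is $\omega$-narrow, invoking \cite{AS2007} to conclude that the countably compact paratopological group $(G,\sigma)$ is saturated, and then applying Corollary~\ref{cooo} to $(G,\sigma)$ to contradict the assumption that $\tau$ is a $T_{1}$-complement of $\sigma$. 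In fairness, you were misled by a typo in the paper's transcription of Proposition~\ref{p003}, and your instinct to verify the bookkeeping was sound; but as written the proof has a genuine gap, and the saturation argument is the real work of the theorem.
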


\begin{proof}
Let $\tau=P(G|\{a_{n}\}_{n\in\mathbb{N}})$. Since $\{a_{n}\}_{n\in\mathbb{N}}$ is a $T$-sequence, $(G, \tau)$ is Hausdorff. Assume the contrary, let $(G, \sigma)$ be a Hausdorff paratopological group topology on $G$ such that $\tau$ and $\sigma$ are $T_{1}$-complementary. Then it follows from Theorem~\ref{t001} and Proposition~\ref{p003} that $(G, \sigma)$ is countably compact and does not
contain non-trivial convergent sequences. We claim that $(G, \sigma)$ is saturated, and then it follows from Corollary~\ref{cooo} that $(G, \sigma)$ does not admit a $T_{1}$-complementary Hausdorff paratopological group topology on $G$, which is a contradiction with our assumption. Indeed, since $\{a_{n}\}_{n\in\mathbb{N}}$ is a $PT$-sequence on infinite group $G$, there exists a neighborhood base $\{U_{n}\}_{n\in\mathbb{N}}$ consisting of countable subsets of $G$. Because $\tau$ and $\sigma$ are $T_{1}$-complementary, then $V+U_{n}$ contains $G$ except finite number points of $G$ for each $n\in\mathbb{N}$ and any open neighborhood $V$ of $e$ in $\sigma$. Therefore, it is easy to see that $\sigma$ is $\omega$-narrow. Hence it follows from \cite[Theorems 2.1 and 2.5]{AS2007} that $(G, \sigma)$ is saturated.
\end{proof}

It is natural to pose the following two questions.

\begin{question}
Let $\{a_{n}\}$ be a $T$-sequence on infinite non-Abelian group $G$. Does $P(G|\{a_{n}\})$ admit a $T_{1}$-complementary Hausdorff paratopological group topology on $G$?
\end{question}

\begin{question}
Let $\{a_{n}\}$ be a $T$-sequence on infinite group $G$. Does $P(G|\{a_{n}\})$ admit a $T_{1}$-independent Hausdorff paratopological group topology on $G$?
\end{question}

\section{$PT$-sequence in countable groups}
In this section, we assume that all groups are countable. We main prove that a countable group $G$ admit a non-discrete $T_{1}$-paratopological group topology if and only if there exits a non-trivial $PT$-sequence $\{a_{n}\}_{n\in\omega}$ of elements of $G$.

\begin{definition}
Let $G$ be an any group and let $\varphi$ be a filter on $G$. We say that $\varphi$ is a {\it multiplicative filter} if, for each subset $U\in\varphi$, there is $V\in\varphi$ such that $V\subset U$ and $VV\subset U$.
\end{definition}

\begin{definition}
For each filter $\varphi$ on a group $G$, there is a maximal multiplicative filter $\phi$ so that $\phi\subset\varphi$. We call $\phi$ a {\it multiplication hull of $\varphi$}.
\end{definition}

\begin{definition}
Let $G$ be a group and let $\{D_{n}\}_{n\omega}$ be a sequence of nonempty subsets of $G$. Let $S_{n}$ be a group of all permutations of the set $\{0, 1, \ldots, n-1\}$. Denote $$\Gamma_{i\leq n}D_{i}=\bigcup_{s\in S_{n+1}}D_{s(0)}D_{s(1)}\ldots D_{s(n)}$$and
$$\Gamma_{n\in\omega}D_{n}=\bigcup_{n\in\omega}\Gamma_{i\leq n}D_{i}.$$
\end{definition}

Let $\varphi$ be a filter on a group $G$. Put $\Gamma(\varphi)=\{\Gamma_{n\in\omega}D_{n}: D_{n}\in\varphi, n\in\omega\}$. Then it follows from \cite[Theorem 3.1.2]{PZ} that the filter $\psi$ with the base $\Gamma(\varphi)$ is a multiplicative hull of the filter $\varphi$.

\begin{definition}
Let $\varphi$ be a filter on a group $G$. Denote by $\varphi^{G}$ the family of all subsets of $G$ of the form $\bigcup_{g\in G}g^{-1}D_{g}g$, where $D_{g}\in\varphi$ for each element $g\in G$.
\end{definition}

\begin{proposition}\label{p002}
Let $\varphi$ be an arbitrary filter on a group $G$. Denote by $\psi$ the filter on $G$ with base $\Gamma((\varphi^{\ast})^{G})$. Then the following statements hold:

\smallskip
(1) $\psi$ is a multiplicative filter;

\smallskip
(2) $aFa^{-1}\in\psi$ for each element $a\in G$ and for each subset $F\in\psi$.
\end{proposition}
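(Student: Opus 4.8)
The plan is to obtain part (1) at once from the general fact recorded just before the proposition, and to reduce part (2) to the observation that conjugation commutes with both of the constructions $(\cdot)^{G}$ and $\Gamma$ used to build $\psi$.

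For (1), first I would note that $(\varphi^{\ast})^{G}$ is a filter base: given two of its members $\bigcup_{g}g^{-1}D_{g}g$ and $\bigcup_{g}g^{-1}E_{g}g$ (with $D_{g},E_{g}\in\varphi^{\ast}$), the set $\bigcup_{g}g^{-1}(D_{g}\cap E_{g})g$ again lies in $(\varphi^{\ast})^{G}$ and is contained in their intersection, since conjugation preserves intersections. Hence the result quoted above the proposition---that for any filter $\chi$ the family $\Gamma(\chi)$ is a base of a multiplicative hull of $\chi$, following from \cite[Theorem 3.1.2]{PZ}---applies with $\chi=(\varphi^{\ast})^{G}$ and shows immediately that $\psi$, being the multiplicative hull of $(\varphi^{\ast})^{G}$, is a multiplicative filter.

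The content lies in (2). I would prove that conjugation by a fixed $a\in G$ carries every basic set of $\psi$ to another basic set, and two observations suffice. First, the family $(\varphi^{\ast})^{G}$ is itself conjugation invariant: writing a typical member as $D=\bigcup_{g}g^{-1}D_{g}g$ and substituting $h=ga^{-1}$, one gets
$$aDa^{-1}=\bigcup_{g}(ga^{-1})^{-1}D_{g}(ga^{-1})=\bigcup_{h}h^{-1}D_{ha}h,$$
so $aDa^{-1}\in(\varphi^{\ast})^{G}$ with the reindexed family $D'_{h}=D_{ha}\in\varphi^{\ast}$. Second, conjugation commutes with $\Gamma$: from the telescoping identity $a\,(D_{s(0)}\cdots D_{s(n)})\,a^{-1}=(aD_{s(0)}a^{-1})\cdots(aD_{s(n)}a^{-1})$, valid for each $s\in S_{n+1}$, together with the fact that conjugation distributes over the unions defining $\Gamma$, I would deduce $a(\Gamma_{n\in\omega}D_{n})a^{-1}=\Gamma_{n\in\omega}(aD_{n}a^{-1})$.

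Combining these, any basic set $B=\Gamma_{n\in\omega}D_{n}$ of $\psi$ (with $D_{n}\in(\varphi^{\ast})^{G}$) satisfies $aBa^{-1}=\Gamma_{n\in\omega}(aD_{n}a^{-1})$, and each $aD_{n}a^{-1}\in(\varphi^{\ast})^{G}$ by the first observation, so $aBa^{-1}$ is again basic for $\psi$. For an arbitrary $F\in\psi$ I would pick a basic $B\subset F$; then $aBa^{-1}\subset aFa^{-1}$ with $aBa^{-1}$ basic, which gives $aFa^{-1}\in\psi$ and hence (2). I do not anticipate a genuine obstacle here: the only steps demanding care are the reindexing $h=ga^{-1}$ and the verification that conjugation distributes over the variable-length, symmetrized products defining $\Gamma$, and both are purely formal.
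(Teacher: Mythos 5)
Your proof is correct and takes essentially the same route as the paper's: part (1) is delegated to the fact quoted before the proposition (from \cite[Theorem 3.1.2]{PZ}), and part (2) rests on the same two ingredients the paper uses, namely the reindexing substitution (you write $h=ga^{-1}$, the paper equivalently replaces $D_{g}(n)$ by $D_{ga}(n)$ to build a basic set $U$ with $a^{-1}Ua$ equal to the prescribed basic set $\Gamma_{n\in\omega}F_{n}$) together with the telescoping identity showing conjugation commutes with $\Gamma$. Your explicit verification that $(\varphi^{\ast})^{G}$ is a filter base is a small addition the paper leaves implicit, but otherwise the arguments coincide.
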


\begin{proof}
It suffices to prove (2). Take an arbitrary $F\in\psi$ and choose a sequence $\{F_{n}\}_{n\in\omega}$ of elements of $(\varphi^{\ast})^{G}$ so that $\Gamma_{n\in\omega}F_{n}\subset U$. For each $n\in\omega$, let $F_{n}=\bigcup_{g\in G}g^{-1}D_{g}(n)g$, where $D_{g}(n)\in \varphi^{\ast}$ for each $g\in G$. Put $$U_{n}=\bigcup_{g\in G}g^{-1}D_{ga}(n)g\ \mbox{and}\ U=\Gamma_{n\in\omega}U_{n}.$$ Hence $$a^{-1}Ua=\Gamma_{n\in\omega}a^{-1}U_{n}a=\Gamma_{n\in\omega}F_{n}\subset U,$$which shows that $a^{-1}Ua\in\psi.$
\end{proof}

\begin{theorem}\label{t000}
Let $\varphi$ be a filter on a group $G$ and $\psi$ a filter with base $\Gamma((\varphi^{\ast})^{G})$. Then $\varphi$ is a $PT$-filter if and only if $\bigcap\psi=\{e\}$. Moreover, for each $PT$-filter $\varphi$, the filter $\psi$ is a filter neighborhood of $e$ in the paratopological group topology $P(G|\varphi)$.
\end{theorem}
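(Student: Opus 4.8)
The plan is to imitate the structure of Theorem~\ref{t00}, but with the symmetrized, conjugation-closed family $\Gamma((\varphi^{\ast})^{G})$ in place of $\sum(\varphi^{\ast})$, and to lean on the \emph{maximality} of the multiplicative hull rather than trying to telescope permuted products by hand. The two structural inputs I would isolate at the outset are: first, by \cite[Theorem 3.1.2]{PZ} the filter $\psi$ with base $\Gamma((\varphi^{\ast})^{G})$ is the multiplicative hull of the filter $\langle(\varphi^{\ast})^{G}\rangle$ generated by $(\varphi^{\ast})^{G}$, so in particular every multiplicative filter contained in $\langle(\varphi^{\ast})^{G}\rangle$ is contained in $\psi$; and second, by Proposition~\ref{p002} this $\psi$ is moreover conjugation-stable, $aFa^{-1}\in\psi$ for all $a\in G$ and $F\in\psi$. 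I would also record the trivial facts that every member of $(\varphi^{\ast})^{G}$ contains $e$ (so $e\in\bigcap\psi$) and that $\langle(\varphi^{\ast})^{G}\rangle$ is conjugation-invariant.

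For necessity, suppose $\varphi$ is a $PT$-filter, witnessed by a ($T_{1}$) paratopological group topology $\tau$ with neighborhood filter $\mathcal{N}=\tau(e)$. Since left and right translations are homeomorphisms, conjugation $x\mapsto gxg^{-1}$ is a homeomorphism fixing $e$, so $\mathcal{N}$ is conjugation-invariant; and by joint continuity of multiplication $\mathcal{N}$ is multiplicative. I would next check $\mathcal{N}\subset\langle(\varphi^{\ast})^{G}\rangle$: given $U\in\mathcal{N}$, conjugation-invariance gives $gUg^{-1}\in\mathcal{N}$ for each $g$, and since $\varphi\to e$ there is $D_{g}\in\varphi^{\ast}$ with $D_{g}\subset gUg^{-1}$, whence $\bigcup_{g\in G}g^{-1}D_{g}g\subset U$ is a member of $(\varphi^{\ast})^{G}$ lying below $U$. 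Thus $\mathcal{N}$ is a multiplicative filter contained in $\langle(\varphi^{\ast})^{G}\rangle$, so the hull property forces $\mathcal{N}\subset\psi$. As $\tau$ is $T_{1}$ we have $\bigcap\mathcal{N}=\{e\}$, and therefore $\bigcap\psi\subset\bigcap\mathcal{N}=\{e\}$; together with $e\in\bigcap\psi$ this gives $\bigcap\psi=\{e\}$.

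For sufficiency, assume $\bigcap\psi=\{e\}$. By Proposition~\ref{p002} the filter $\psi$ is multiplicative and satisfies $aFa^{-1}\in\psi$, and every member contains $e$; together with $\bigcap\psi=\{e\}$ these are exactly the Pontryagin-type axioms making $\psi$ a base of neighborhoods of $e$ for a $T_{1}$-paratopological group topology $\eta$ on $G$. Each $F=\Gamma_{n\in\omega}F_{n}\in\psi$ contains, through the factor indexed by $g=e$, a member of $\varphi^{\ast}$ and hence a member of $\varphi$, so $\varphi$ converges to $e$ in $\eta$; thus $\varphi$ is a $PT$-filter. For the ``moreover'' clause I would identify $\eta$ with $P(G|\varphi)$: since $\varphi\to e$ in $\eta$ and $P(G|\varphi)$ is the strongest such topology, $\eta\leq P(G|\varphi)$, so $\psi=\mathcal{N}_{\eta}\subset\mathcal{N}_{P(G|\varphi)}$; conversely the necessity argument applied to $\tau=P(G|\varphi)$ yields $\mathcal{N}_{P(G|\varphi)}\subset\psi$. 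Hence $\psi=\mathcal{N}_{P(G|\varphi)}$, so $\eta=P(G|\varphi)$ and $\psi$ is precisely its filter of neighborhoods of $e$.

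The main obstacle is verifying $\mathcal{N}\subset\langle(\varphi^{\ast})^{G}\rangle$ correctly and then invoking the hull property, that is, recognizing that the neighborhood filter of $e$ is a multiplicative conjugation-invariant filter trapped inside $\langle(\varphi^{\ast})^{G}\rangle$ and must therefore sit inside the largest such filter $\psi$. The delicate point is the simultaneous control of all conjugacy directions through the single union $\bigcup_{g}g^{-1}D_{g}g$; this is exactly what replaces the ordered telescoping of Proposition~\ref{p01}, which cannot be transcribed directly, because the permutations built into $\Gamma_{i\leq n}$ destroy the right-to-left cancellation $V_{n+1}V_{n+1}\subset V_{n}$ available in the Abelian case. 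Relying on the maximality of the multiplicative hull, rather than on an explicit telescoping estimate, is what makes the non-Abelian argument go through.
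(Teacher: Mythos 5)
Your proposal is correct, and it reaches the same three endpoints as the paper's own proof (necessity via the inclusion $\tau_{e}\subset\psi$ of the neighborhood filter, sufficiency via Proposition~\ref{p002} plus $\bigcap\psi=\{e\}$, and the identification of $\psi$ with the neighborhood filter of $P(G|\varphi)$ by combining the two inclusions), but the necessity direction is organized differently. The paper applies \cite[Lemma 3.1.1]{PZ} directly to $\tau_{e}$: given $U\in\tau_{e}$ it produces $U_{n}\in\tau_{e}$ with $\Gamma_{n\in\omega}U_{n}\subset U$, then conjugates level by level, choosing $D_{g}(n)\in\tau_{e}\subset\varphi^{\ast}$ with $g^{-1}D_{g}(n)g\subset U_{n}$ and setting $V_{n}=\bigcup_{g\in G}g^{-1}D_{g}(n)g$, so that $\Gamma_{n\in\omega}V_{n}$ is an element of $\Gamma((\varphi^{\ast})^{G})$ lying inside $U$. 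You instead get the inclusion $\tau_{e}\subset\langle(\varphi^{\ast})^{G}\rangle$ in one stroke (indeed $U=\bigcup_{g\in G}g^{-1}(gUg^{-1})g$ is itself a member of $(\varphi^{\ast})^{G}$, since $gUg^{-1}\in\tau_{e}\subset\varphi$ and contains $e$) and then quote the hull property of \cite[Theorem 3.1.2]{PZ} to trap the multiplicative, conjugation-invariant filter $\tau_{e}$ inside $\psi$; this is cleaner bookkeeping and isolates a reusable abstract principle. One caveat: the definition of multiplicative hull in the paper only says \emph{a maximal} multiplicative filter contained in the given filter, whereas your step needs it to be the \emph{largest} one, i.e., that every multiplicative filter contained in $\langle(\varphi^{\ast})^{G}\rangle$ lies in $\psi$. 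That greatest-element property does not follow formally from maximality; it is exactly what \cite[Lemma 3.1.1]{PZ}, applied to the multiplicative filter $\tau_{e}$ together with the monotonicity of $\Gamma$ in each factor and your inclusion $\tau_{e}\subset\langle(\varphi^{\ast})^{G}\rangle$, supplies --- so your appeal to maximality silently re-runs the paper's telescoping step, merely quoted from \cite{PZ} rather than performed by hand. Provided you read \cite[Theorem 3.1.2]{PZ} as asserting the largest multiplicative subfilter with base $\Gamma(\cdot)$ (its standard form), your argument is complete; your sufficiency direction and the moreover clause agree with the paper's essentially verbatim, your observation about the $g=e$ factor of $F_{0}$ being the paper's remark that $\psi\subset\varphi$.
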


\begin{proof}
Suppose that $\varphi$ is a $PT$-filter, and assume that $\tau$ is an arbitrary paratopological group topology on $G$ such that $\varphi$ converges to $e$ in $\tau$. Assume that $\tau_{e}$ is the filter of neighborhoods of $e$ in $(G, \tau)$. It suffices to prove that $\tau_{e}\subset\psi$. Indeed, take an arbitrary $U\in\tau_{e}$. From \cite[Lemma 3.1.1]{PZ}, it follows that there is a sequence $\{U_{n}\}$ of elements of $\tau_{e}$ such that $=\Gamma_{n\in\omega}U_{n}\subset U$. For each $g\in G$ and for each $n\in\omega$, take $D_{g}(n)\in\tau_{e}$ so that $g^{-1}D_{g}(n)g\subset U_{n}$, and put $V_{n}=\bigcup_{g\in G}g^{-1}D_{g}(n)g$. Since $\varphi$ is a $PT$-filter, it follows that $\tau_{e}\subset\varphi^{\ast}$, hence $D_{g}(n)\in\varphi^{\ast}$ for each $g\in G$ and for each $n\in\omega$. Therefore, $\Gamma_{n\in\omega}V_{n}\in\psi$. Because $\Gamma_{n\in\omega}V_{n}\subset U$, it follows that $\tau_{e}\subset\psi$. Since the topology $\tau$ is $T_{1}$, we have that $\bigcap\tau_{e}=\{e\}$ and consequently $\bigcap\psi=\{e\}$.

Assume that $\bigcap\psi=\{e\}$. It follows from Proposition~\ref{p002} that $\psi$ is a filter of neighborhoods of $e$ for some $T_{1}$-paratopological group topology on $G$. Then $\varphi$ is a $PT$-filter because $\psi\subset\varphi$. Furthermore, by the arguments of the above paragraph, for each $PT$-filter $\varphi$, the filter $\psi$ is a filter of neighborhoods of $e$ in $P(G|\varphi)$.
\end{proof}

\begin{definition}
Let $G=\{g_{n}: n\in\omega\}$ be a countable group and $\varphi$ an arbitrary filter on $G$. Take any sequence $\{C_{k}\}_{k\in\omega}$ of elements of the filter $\varphi^{\ast}$ and let $$U_{k}=\bigcup_{n\in\omega}g_{n}^{-1}C_{k+n}g_{n}.$$ The subset $\Gamma_{k\in\omega}U_{k}$ is called a {\it standard element} of the family $\Gamma((\varphi^{\ast})^{G})$.
\end{definition}

\begin{lemma}\label{l00}
Let $G$ be a countable group. Then, for each subset $V\in\Gamma((\varphi^{\ast})^{G})$, there is a standard element $U\in\Gamma((\varphi^{\ast})^{G})$ such that $U\subset V$.
\end{lemma}

\begin{proof}
Since $V\in\Gamma((\varphi^{\ast})^{G})$, we can find a sequence $\{V_{k}\}_{k\in\omega}$ of elements of the family $(\varphi^{\ast})^{G}$ so that $V=\Gamma_{k\in\omega}V_{k}$. For each $k\in\omega$, $V_{k}$ has the following form $$V_{k}=\bigcup_{n\in\omega}g_{n}^{-1}C_{k+n}g_{n},$$where $C_{kn}\in\varphi^{\ast}$, and put $$C_{k}=\bigcap_{i+j=k}C_{ij}, U_{k}=\bigcup_{n\in\omega}g_{n}^{-1}C_{k+n}g_{n}.$$Hence $U=\Gamma_{k\in\omega}U_{k}$ is a standard element of the family $\Gamma((\varphi^{\ast})^{G})$ and $U\subset V$.
\end{proof}

By Theorem~\ref{t000} and Lemma~\ref{l00}, we have the following theorem.

\begin{theorem}\label{t004}
Let $G$ be a countable group and $\varphi$ a filter on $G$. Then $\varphi$ is a $PT$-filter if and only if, for each element $g\in G\setminus\{e\}$, there exists a standard element $U\in\Gamma((\varphi^{\ast})^{G})$ such that $g\not\in U$.
\end{theorem}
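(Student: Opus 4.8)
The plan is to combine the general $PT$-filter criterion of Theorem~\ref{t000} with the reduction to standard elements provided by Lemma~\ref{l00}. By Theorem~\ref{t000}, the filter $\varphi$ is a $PT$-filter if and only if $\bigcap\psi=\{e\}$, where $\psi$ is the filter with base $\Gamma((\varphi^{\ast})^{G})$. So the whole task reduces to re-expressing the condition $\bigcap\psi=\{e\}$ in terms of standard elements, and this is exactly where Lemma~\ref{l00} does the work.

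First I would unwind the condition $\bigcap\psi=\{e\}$. Since $\psi$ has base $\Gamma((\varphi^{\ast})^{G})$, we have $\bigcap\psi=\bigcap_{V\in\Gamma((\varphi^{\ast})^{G})}V$, and by Proposition~\ref{p00}(a) (or simply because each $V$ contains $e$) we always have $e\in\bigcap\psi$. Thus $\bigcap\psi=\{e\}$ is equivalent to saying: for each $g\in G\setminus\{e\}$ there is some $V\in\Gamma((\varphi^{\ast})^{G})$ with $g\notin V$.

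The main step is then to replace this arbitrary separating element $V$ by a \emph{standard} one. For the forward direction, suppose $\varphi$ is a $PT$-filter and fix $g\in G\setminus\{e\}$; by the reformulation above there is $V\in\Gamma((\varphi^{\ast})^{G})$ with $g\notin V$, and by Lemma~\ref{l00} there is a standard element $U\in\Gamma((\varphi^{\ast})^{G})$ with $U\subset V$, whence $g\notin U$. Conversely, if for each $g\in G\setminus\{e\}$ there is a standard element $U\in\Gamma((\varphi^{\ast})^{G})$ with $g\notin U$, then in particular $g\notin\bigcap\psi$; since $g$ was an arbitrary nonidentity element and $e\in\bigcap\psi$ always, we obtain $\bigcap\psi=\{e\}$, so $\varphi$ is a $PT$-filter by Theorem~\ref{t000}.

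I expect no serious obstacle here: the content is entirely packaged into the two cited results, and the only care needed is the elementary observation that standard elements are themselves members of $\Gamma((\varphi^{\ast})^{G})$ (so the ``only if'' direction is immediate once Lemma~\ref{l00} supplies a standard refinement). The only place warranting a sentence of justification is that $e$ always lies in every element of $\Gamma((\varphi^{\ast})^{G})$, since each $C_{k}\in\varphi^{\ast}$ contains $0=e$; this guarantees that separating every nonidentity $g$ by a standard element is genuinely equivalent to $\bigcap\psi=\{e\}$ and not merely to $\bigcap\psi\subset\{e\}$.
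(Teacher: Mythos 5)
Your proof is correct and takes exactly the paper's route: the paper states Theorem~\ref{t004} with the one-line justification ``By Theorem~\ref{t000} and Lemma~\ref{l00}'' and leaves the deduction implicit, which is precisely the argument you spell out. Your added care about why $e$ lies in every element of $\Gamma((\varphi^{\ast})^{G})$ (so that separation of all $g\neq e$ by standard elements is equivalent to $\bigcap\psi=\{e\}$ rather than merely $\bigcap\psi\subset\{e\}$) is a sound, if routine, detail the paper omits.
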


\begin{definition}
Let $G=\{g_{n}: n\in \omega\}$ and $X=\{x_{ij}: i, j\in\omega\}$ a set of variables. For each $n\in\omega$, put
$$\mathcal{X}_{n}=\{g_{j_{0}}^{-1}x_{i_{0}j_{0}}g_{j_{0}}g_{j_{1}}^{-1}x_{i_{1}j_{1}}g_{j_{1}}\ldots g_{j_{n}}^{-1}x_{i_{n}j_{n}}g_{j_{n}}: x_{ij}\in X, i_{k}+j_{k}\leq n, i_{0}, \ldots, i_{n}\ \mbox{are distinct}\}.$$
We denote these finitely many group words in the alphabet $G\cup X$ by $f(x_{i_{0}j_{0}}, \ldots, x_{i_{n}j_{n}})$.
\end{definition}

\begin{theorem}\label{t008}
Let $G=\{g_{n}: n\in \omega\}$ be a group and let $\varphi$ be a filter on $G$. Then $\varphi$ is a $PT$-filter if and only if, for each element $g\in G\setminus\{e\}$, there exists a sequence $\{C_{n}\}_{n\in\omega}$ of elements of $\varphi^{\ast}$ so that $$g\not\in f(C_{i_{0}+j_{0}}, C_{i_{1}+j_{1}}, \ldots, C_{i_{n}+j_{n}})$$ for each $n\in \omega$ and for each group word $f(x_{i_{0}j_{0}}, \ldots, x_{i_{n}j_{n}})\in \mathcal{X}_{n}$.
\end{theorem}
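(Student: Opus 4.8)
The plan is to reduce Theorem~\ref{t008} to the already-established Theorem~\ref{t004}, which characterizes $PT$-filters on a countable group in terms of standard elements of $\Gamma((\varphi^{\ast})^{G})$. The point of Theorem~\ref{t008} is to replace the somewhat opaque condition ``$g \notin U$ for some standard element $U$'' by the explicit combinatorial condition involving the group words $f(x_{i_0 j_0}, \ldots, x_{i_n j_n}) \in \mathcal{X}_n$. So the whole proof is really a bookkeeping translation between the two formulations, and the main task is to set up a dictionary between a standard element $U = \Gamma_{k \in \omega} U_k$ (where $U_k = \bigcup_{n \in \omega} g_n^{-1} C_{k+n} g_n$) and the family of evaluated group words.

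First I would fix $g \in G \setminus \{e\}$ and unwind the definition of a standard element. By the definition preceding Lemma~\ref{l00}, a standard element is built from a single sequence $\{C_k\}_{k \in \omega}$ in $\varphi^{\ast}$ via $U_k = \bigcup_{n \in \omega} g_n^{-1} C_{k+n} g_n$ and $U = \Gamma_{k \in \omega} U_k = \bigcup_{m} \Gamma_{k \le m} U_k = \bigcup_m \bigcup_{s \in S_{m+1}} U_{s(0)} \cdots U_{s(m)}$. The key observation is that a typical element of $U_{s(0)} \cdots U_{s(m)}$ has exactly the shape of an evaluated word from $\mathcal{X}_m$: each factor contributes a conjugate $g_{j}^{-1} c\, g_{j}$ with $c \in C_{s(\ell) + j}$, so writing $i_\ell = s(\ell)$ we land in $C_{i_\ell + j_\ell}$, and the permutation $s$ exactly accounts for the requirement in the definition of $\mathcal{X}_n$ that the indices $i_0, \ldots, i_n$ be distinct (they are a permutation of $\{0, \ldots, m\}$). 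Hence, for a \emph{fixed} sequence $\{C_k\}$, the standard element $U$ equals the union over all $n$ and all words $f \in \mathcal{X}_n$ of the sets $f(C_{i_0+j_0}, \ldots, C_{i_n+j_n})$. This identity is precisely what converts ``$g \notin U$'' into ``$g \notin f(C_{i_0+j_0}, \ldots, C_{i_n+j_n})$ for every $n$ and every $f \in \mathcal{X}_n$.''

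With that dictionary in hand both directions are immediate. For necessity, if $\varphi$ is a $PT$-filter then Theorem~\ref{t004} gives, for each $g \neq e$, a standard element $U \notni g$; reading off its generating sequence $\{C_k\}$ and applying the identity above shows $g$ avoids every word value, which is the desired condition. For sufficiency, given for each $g$ a sequence $\{C_n\}$ whose word values all miss $g$, the same identity shows the associated standard element misses $g$, so Theorem~\ref{t004} again yields that $\varphi$ is a $PT$-filter. The only genuine subtlety — and the step I expect to be the main obstacle — is verifying the claimed set identity $U = \bigcup_{n}\bigcup_{f \in \mathcal{X}_n} f(C_{i_0+j_0},\ldots,C_{i_n+j_n})$ cleanly, in particular matching the ``distinct indices $i_0,\ldots,i_n$'' convention in the definition of $\mathcal{X}_n$ against the permutation $s \in S_{n+1}$ in the definition of $\Gamma_{i \le n}$, and confirming that the shift-by-$n$ indexing $C_{k+n}$ inside $U_k$ corresponds correctly to the subscript $C_{i_\ell + j_\ell}$ appearing in the word. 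Once the indices are lined up the rest is purely formal, so I would invest the care there and let the two implications follow in a sentence each.
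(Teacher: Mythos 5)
Your reduction to Theorem~\ref{t004} and the dictionary between standard elements and evaluated group words is exactly the paper's route: its necessity argument is the easy inclusion $\bigcup_{n}\bigcup_{f\in\mathcal{X}_{n}}f(C_{i_{0}+j_{0}},\ldots,C_{i_{n}+j_{n}})\subset U$ (the distinct indices $i_{0},\ldots,i_{n}$, each at most $n$ because $i_{k}+j_{k}\le n$, form a permutation of $\{0,\ldots,n\}$, and each factor satisfies $g_{j_{\ell}}^{-1}C_{i_{\ell}+j_{\ell}}g_{j_{\ell}}\subset U_{i_{\ell}}$), and its sufficiency argument is the reverse inclusion; the paper just runs both as contradictions instead of stating a single set identity.

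That reverse inclusion, however, is where your sketch is literally wrong as stated, and it is the one genuine idea you are missing. You write that a typical element of $U_{s(0)}\cdots U_{s(m)}$ ``has exactly the shape of an evaluated word from $\mathcal{X}_{m}$.'' It does not: the definition of $\mathcal{X}_{m}$ requires $i_{k}+j_{k}\le m$, whereas each $U_{k}=\bigcup_{n\in\omega}g_{n}^{-1}C_{k+n}g_{n}$ is a union over \emph{all} $n$, so the conjugating indices $j_{\ell}$ are unbounded and $i_{\ell}+j_{\ell}$ can far exceed $m$. The paper's fix is a padding device: given $g\in U_{s(0)}\cdots U_{s(m)}$ with witnesses $j_{0},\ldots,j_{m}$, put $n=\max\{i_{0}+j_{0},\ldots,i_{m}+j_{m}\}$ and append dummy variables with $i_{m+1}=m+1,\ldots,i_{n}=n$ and $j_{m+1}=\cdots=j_{n}=0$; the enlarged word then lies in $\mathcal{X}_{n}$ (the constraint $i_{k}+j_{k}\le n$ holds and the indices remain distinct), and since every $C_{k}\in\varphi^{\ast}$ contains the identity, the appended factors may be evaluated at $e$, so $g\in f(C_{i_{0}+j_{0}},\ldots,C_{i_{n}+j_{n}})$ persists. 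With this one repair your identity is correct (it is essentially Proposition~\ref{p005}) and both implications go through exactly as you describe. To your credit you flagged precisely this index-matching as the expected obstacle, but flagging it is not resolving it: without the padding, the sufficiency direction does not close.
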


\begin{proof}
Necessity. Assume that $\varphi$ is a $PT$-filter. Take an arbitrary $g\in G\setminus\{e\}$. It follows from Theorem~\ref{t004} that there exists a standard element $U\in\Gamma((\varphi^{\ast})^{G})$ such that $g\not\in U$. Hence there exists a sequence $\{C_{n}\}_{n\in\omega}$ of elements of $\varphi^{\ast}$ so that $U=\Gamma_{k\in\omega}U_{k}$ is determined by $\{C_{n}\}_{n\in\omega}$. Suppose that there exist $n\in\omega$ and a group word $f(x_{i_{0}j_{0}}, \ldots, x_{i_{n}j_{n}})\in \mathcal{X}_{n}$ so that $$g\in f(C_{i_{0}+j_{0}}, C_{i_{1}+j_{1}}, \ldots, C_{i_{n}+j_{n}}),$$Then $g\in U_{i_{0}}U_{i_{1}}\ldots U_{i_{n}}\subset \Gamma_{k\leq n}U_{k}\subset \Gamma_{k\in\omega}U_{k}$, which is a contradiction. Therefore, $g\not\in f(C_{i_{0}+j_{0}}, C_{i_{1}+j_{1}}, \ldots, C_{i_{n}+j_{n}})$ for each $n\in \omega$ and for each group word $f(x_{i_{0}j_{0}}, \ldots, x_{i_{n}j_{n}})\in \mathcal{X}_{n}$.

Sufficiency. By Theorem~\ref{t004}, it suffices to prove that, for each element $g\in G\setminus\{e\}$, there exists a standard element $U\in\Gamma((\varphi^{\ast})^{G})$ such that $g\not\in U$. Indeed, fix a point $g\in G\setminus\{e\}$. By the assumption, there exists a sequence $\{C_{n}\}_{n\in\omega}$ of elements of $\varphi^{\ast}$ so that $$g\not\in f(C_{i_{0}+j_{0}}, C_{i_{1}+j_{1}}, \ldots, C_{i_{n}+j_{n}})$$ for each $n\in \omega$ and for each group word $f(x_{i_{0}j_{0}}, \ldots, x_{i_{n}j_{n}})\in \mathcal{X}_{n}$. Let $U=\Gamma_{k\in\omega}U_{k}$ is determined by $\{C_{n}\}_{n\in\omega}$. We claim that $g\not\in U$. Suppose not, there exists $n\in\omega$ and a permutation $s\in S_{m+1}$ such that $$g\in U_{s(0)}U_{s(1)}\ldots U_{s(m)}.$$ Let $i_{0}=s(0), \ldots, i_{m}=s(m)$. Since $g\in U_{s(0)}U_{s(1)}\ldots U_{s(m)}$, there exist $j_{0}, \ldots, j_{m}\in\omega$ such that $$g\in g_{j_{0}}^{-1}C_{i_{0}+j_{0}}g_{j_{0}}g_{j_{1}}^{-1}C_{i_{1}+j_{1}}g_{j_{1}}\ldots g_{j_{m}}^{-1}C_{i_{m}+j_{m}}g_{j_{m}}.$$Put $n=\max\{i_{0}+j_{0}, \ldots, i_{m}+j_{m}\}$, and let $$i_{m+1}=m+1, \ldots, i_{n}=n; j_{m+1}= \ldots=j_{n}=0.$$Then $f(x_{i_{0}j_{0}}, \ldots, x_{i_{n}j_{n}})\in \mathcal{X}_{n}$ and $g\in f(C_{i_{0}+j_{0}}, C_{i_{1}+j_{1}}, \ldots, C_{i_{n}+j_{n}})$, which is a contradiction.
\end{proof}

\begin{theorem}\label{t005}
A countable group $G$ admits a non-discrete $T_{1}$-paratopological group topology if and only if there exits a non-trivial $PT$-sequence $\{a_{n}\}_{n\in\omega}$ of elements of $G$.
\end{theorem}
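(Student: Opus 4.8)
The plan is to prove the two implications separately, with almost all of the work in the forward direction. For sufficiency, given a non-trivial $PT$-sequence $\{a_{n}\}_{n\in\omega}$ I would simply take $P(G|\{a_{n}\}_{n\in\omega})$: by definition this is a paratopological group topology in which $\{a_{n}\}$ converges to $e$, it is $T_{1}$ because (via Theorem~\ref{t000}) the associated filter satisfies $\bigcap\psi=\{e\}$, and it is non-discrete because the sequence is non-trivial (if $e$ were isolated, every sequence converging to $e$ would be eventually $e$).

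For necessity, suppose $G=\{g_{n}:n\in\omega\}$ with $g_{0}=e$ carries a non-discrete $T_{1}$ paratopological group topology $\tau$, and let $\varphi=\tau_{e}$ be the filter of neighborhoods of $e$. Since $\varphi$ converges to $e$ in $\tau$, it is a $PT$-filter, so by Theorem~\ref{t004} for every $g\in G\setminus\{e\}$ there is a standard element $U_{g}\in\Gamma((\varphi^{\ast})^{G})$ with $g\notin U_{g}$; recall that each such $U_{g}$ is determined by a single sequence $\{C^{g}_{k}\}_{k\in\omega}$ of members of $\varphi^{\ast}=\varphi$ (every neighborhood of $e$ contains $e$). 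The crucial observation is that $G\setminus\{e\}$ is countable, so the whole family $\mathcal{C}=\{C^{g}_{k}:g\in G\setminus\{e\},\ k\in\omega\}$ is a countable subfamily of the neighborhood filter $\varphi$.

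I would then enumerate $\mathcal{C}=\{C_{i}:i\in\omega\}$, set $B_{n}=C_{0}\cap\cdots\cap C_{n}$, and note that $\{B_{n}\}$ is a decreasing chain of neighborhoods of $e$, each infinite because $\tau$ is non-discrete and $T_{1}$. Choosing $a_{n}\in B_{n}\setminus\{a_{0},\ldots,a_{n-1}\}$ makes the $a_{n}$ distinct, so the sequence is non-trivial. Writing $A_{m}=\{a_{j}:j\ge m\}$, the fact that $\{B_{n}\}$ is decreasing gives $A_{m}\subseteq B_{m}$ for every $m$; hence each $C^{g}_{k}$, being some $C_{i}\supseteq B_{i}\supseteq A_{i}$, contains a tail of the sequence and so lies in the sequence-filter $\varphi_{0}$ generated by $\{A_{m}\}$. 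Consequently $\{C^{g}_{k}\}_{k\in\omega}\subseteq\varphi_{0}^{\ast}$, so the very same $U_{g}$ is a standard element of $\Gamma((\varphi_{0}^{\ast})^{G})$ and still omits $g$. By Theorem~\ref{t004} the filter $\varphi_{0}$ is a $PT$-filter, i.e. $\{a_{n}\}_{n\in\omega}$ is a non-trivial $PT$-sequence.

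The step I expect to be the main obstacle is precisely this organization. A general non-discrete $T_{1}$ paratopological group need not be first countable and may contain no non-trivial $\tau$-convergent sequence, so one cannot extract the desired sequence from $\tau$ directly; the role of countability is to replace the possibly uncountable neighborhood filter by one countable witnessing family $\mathcal{C}$, compress it into a single decreasing chain $\{B_{n}\}$, and thread $\{a_{n}\}$ through it. The key verification is that threading keeps each individual witness $C^{g}_{k}$ inside $\varphi_{0}$, so that the standard elements $U_{g}$ furnished by Theorem~\ref{t004} transfer verbatim from $\varphi$ to $\varphi_{0}$. The only minor points to check are that finite intersections of members of $\mathcal{C}$ stay infinite (immediate, since they are $\tau$-neighborhoods of $e$), which guarantees both that the $a_{n}$ can be chosen distinct and that $\varphi_{0}$ is non-trivial.
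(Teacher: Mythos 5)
Your proof is correct, but it follows a genuinely different route from the paper's. The paper proves necessity by inductively constructing neighborhoods $U_{n}$ of $e$ in $(G,\tau)$ with $U_{n+1}U_{n+1}\subset U_{n}$, $\bigcap_{n\in\omega}U_{n}=\{e\}$, and $g_{i}^{-1}U_{n+1}g_{i}\subset U_{n}$ for $i\leq n$ --- both the separation condition and the conjugation bookkeeping are feasible precisely because $G$ is countable --- so that $\{U_{n}\}$ is a base at $e$ of a coarser, first-countable, non-discrete $T_{1}$ paratopological group topology $\delta$; any non-trivial sequence $\delta$-converging to $e$ is then a $PT$-sequence directly from the definition, with no appeal to the $\Gamma$-machinery at all. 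You instead stay at the level of the combinatorial criterion: applying Theorem~\ref{t004} to the neighborhood filter $\varphi=\tau(e)$, you extract for each $g\neq e$ a standard element $U_{g}$ determined by witnesses $C^{g}_{k}\in\varphi$, use countability of $G$ to collect all witnesses into one countable family, thread an injective sequence through the decreasing intersections $B_{n}$, and observe that every witness then contains a tail of the sequence, so the \emph{same} standard elements certify via Theorem~\ref{t004} that the sequence filter is a $PT$-filter. Both arguments diagonalize through a countable family of $\tau$-neighborhoods, and both use that every neighborhood of $e$ in a non-discrete $T_{1}$ paratopological group is infinite (your parenthetical justification of this is right: a finite neighborhood would isolate $e$, hence every point); the trade-off is that the paper must verify its family really is a base of a paratopological group topology (the $U^{2}$ and conjugation conditions), in exchange obtaining the stronger reusable fact that every non-discrete $T_{1}$ paratopological group topology on a countable group coarsens to a first-countable one, whereas your $B_{n}$ need satisfy no multiplicative conditions whatsoever --- all multiplicative structure is packaged inside the transferred standard elements --- at the cost of leaning on the heavier Theorem~\ref{t004}. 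Your transfer step is sound: since each $C^{g}_{k}$ contains $e$ and a tail $A_{i}$, it lies in $\varphi_{0}^{\ast}$, so $U_{g}$ is literally a standard element of $\Gamma((\varphi_{0}^{\ast})^{G})$ with respect to the same enumeration of $G$, and the sufficiency direction is the same one-line observation in both treatments.
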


\begin{proof}
By the definition of $PT$-sequence, it suffices to prove the necessity.

Suppose that $\tau$ is a non-discrete $T_{1}$-paratopological group on $G$ and prove the existence of non-trivial $PT$-sequence. Indeed, choose inductively a sequence $\{U_{n}\}_{n\in\omega}$ of neighborhoods of $e$ in $(G, \tau)$ satisfying the following conditions:
$$U_{n+1}U_{n+1}\subset U_{n}, \bigcap_{n\in\omega}U_{n}=\{e\}, g_{0}^{-1}U_{n+1}g_{0}\subset U_{n}, g_{1}^{-1}U_{n+1}g_{1}\subset U_{n}, \ldots, g_{n}^{-1}U_{n+1}g_{n}\subset U_{n}.$$
Then the family $\{U_{n}\}_{n\in\omega}$ can be taken as a base of neighborhoods of $e$ for some first-countable $T_{1}$-paratopological group topology $\delta$ on $G$. Obviously, $\delta$ is non-discrete, hence there exists a non-trivial sequence $\{a_{n}\}_{n\in\omega}$ converging to $e$ in $(G, \delta)$. Thus $\{a_{n}\}_{n\in\omega}$ is a $PT$-sequence.
\end{proof}

\begin{proposition}\label{p005}
For each $PT$-filter $\varphi$ on a group $G$, the family $$\{\bigcup_{n\in\omega}\bigcup_{f\in\mathcal{X}_{n}}f(C_{i_{0}+j_{0}}, C_{i_{1}+j_{1}}, \ldots, C_{i_{n}+j_{n}}): \{C_{n}\}_{n\in\omega}\ \mbox{is a sequence of elements of}\ \varphi^{\ast}\}$$ forms a base of neighborhoods of $e$ in $P(G|\varphi)$.
\end{proposition}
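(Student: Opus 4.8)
The plan is to recognize the family displayed in the statement as nothing more than the collection of standard elements of $\Gamma((\varphi^{\ast})^{G})$, and then to feed this into the machinery already established in Theorem~\ref{t000} and Lemma~\ref{l00}. Write $G=\{g_{n}:n\in\omega\}$ and fix a sequence $\{C_{n}\}_{n\in\omega}$ of elements of $\varphi^{\ast}$. Put $U_{k}=\bigcup_{n\in\omega}g_{n}^{-1}C_{k+n}g_{n}$, so that $U=\Gamma_{k\in\omega}U_{k}$ is exactly the standard element determined by $\{C_{n}\}_{n\in\omega}$. The heart of the argument is the set-theoretic identity
$$\Gamma_{k\in\omega}U_{k}=\bigcup_{n\in\omega}\bigcup_{f\in\mathcal{X}_{n}}f(C_{i_{0}+j_{0}},C_{i_{1}+j_{1}},\ldots,C_{i_{n}+j_{n}}),$$
which shows that the family parametrized by standard elements and the family described in the statement coincide set by set, for one and the same choice of $\{C_{n}\}_{n\in\omega}$.

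First I would prove the inclusion ``$\supseteq$''. Fixing $n$ and a word $f(x_{i_{0}j_{0}},\ldots,x_{i_{n}j_{n}})\in\mathcal{X}_{n}$, the defining constraints (the $i_{0},\ldots,i_{n}$ distinct with $i_{k}+j_{k}\leq n$) force $\{i_{0},\ldots,i_{n}\}=\{0,\ldots,n\}$, i.e. the $i_{k}$ form a permutation of $\{0,\ldots,n\}$. Since $U_{i_{k}}\supseteq g_{j_{k}}^{-1}C_{i_{k}+j_{k}}g_{j_{k}}$ for each $k$, we obtain $f(C_{i_{0}+j_{0}},\ldots,C_{i_{n}+j_{n}})\subseteq U_{i_{0}}U_{i_{1}}\cdots U_{i_{n}}\subseteq\Gamma_{k\leq n}U_{k}\subseteq\Gamma_{k\in\omega}U_{k}$. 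This is precisely the computation already carried out in the necessity part of the proof of Theorem~\ref{t008}, so no new work is needed here.

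For the reverse inclusion ``$\subseteq$'', I would take $g\in\Gamma_{k\in\omega}U_{k}$, so that $g\in U_{s(0)}\cdots U_{s(m)}$ for some $m\in\omega$ and $s\in S_{m+1}$, and hence $g\in g_{j_{0}}^{-1}C_{s(0)+j_{0}}g_{j_{0}}\cdots g_{j_{m}}^{-1}C_{s(m)+j_{m}}g_{j_{m}}$ for suitable $j_{0},\ldots,j_{m}$. Setting $i_{k}=s(k)$ for $k\leq m$, $n=\max\{i_{0}+j_{0},\ldots,i_{m}+j_{m}\}$, and padding with $i_{m+1}=m+1,\ldots,i_{n}=n$ together with $j_{m+1}=\cdots=j_{n}=0$, I would note that each padding factor $g_{0}^{-1}C_{i_{k}}g_{0}$ contains $e$ (because $C_{i_{k}}\in\varphi^{\ast}$ gives $e\in C_{i_{k}}$), so choosing $e$ from those factors leaves the product equal to $g$. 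The resulting word lies in $\mathcal{X}_{n}$ and witnesses $g\in f(C_{i_{0}+j_{0}},\ldots,C_{i_{n}+j_{n}})$. This padding step is the one delicate point of the whole argument, and it is exactly the trick used in the sufficiency direction of Theorem~\ref{t008}; I expect it to be the main obstacle only in that one must verify that distinctness of the $i_{k}$ and the bound $i_{k}+j_{k}\leq n$ survive the extension.

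With the identity secured, the family in the statement equals the family of standard elements of $\Gamma((\varphi^{\ast})^{G})$. By Theorem~\ref{t000}, the filter $\psi$ with base $\Gamma((\varphi^{\ast})^{G})$ is precisely the neighborhood filter of $e$ in $P(G|\varphi)$; and by Lemma~\ref{l00}, every member of $\Gamma((\varphi^{\ast})^{G})$ contains a standard element, while each standard element belongs to $\Gamma((\varphi^{\ast})^{G})$, so the standard elements form a base of $\psi$. Therefore the displayed family is a base of neighborhoods of $e$ in $P(G|\varphi)$, which completes the proof.
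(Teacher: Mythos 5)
Your proof is correct and follows precisely the route the paper intends: Proposition~\ref{p005} is stated in the paper without proof, being an immediate consequence of Theorem~\ref{t000} (the filter $\psi$ with base $\Gamma((\varphi^{\ast})^{G})$ is the neighborhood filter of $e$ in $P(G|\varphi)$), Lemma~\ref{l00} (standard elements form a base of $\psi$), and the identification of each standard element $\Gamma_{k\in\omega}U_{k}$ with the corresponding set $\bigcup_{n\in\omega}\bigcup_{f\in\mathcal{X}_{n}}f(C_{i_{0}+j_{0}},\ldots,C_{i_{n}+j_{n}})$, and this is exactly what you assemble. Your verification of that identity --- including the observation that the distinctness of the $i_{k}$ together with $i_{k}+j_{k}\leq n$ forces the $i_{k}$ to be a permutation of $\{0,\ldots,n\}$ in one direction, and the padding with indices $i_{m+1}=m+1,\ldots,i_{n}=n$, $j_{m+1}=\cdots=j_{n}=0$ using $e\in C^{\ast}$ in the other --- is the same computation carried out in the two halves of the proof of Theorem~\ref{t008}, so nothing is missing.
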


\begin{theorem}\label{t009}
Let $G$ be a countable group and let $\{a_{n}\}_{n\in\omega}$ be a $PT$-sequence in $G$. If $P(G|\{a_{n}\})$ is Hausdorff, then there exists a nontrivial $PT$-sequence $\{b_{n}\}_{n\in\omega}$ in $G$ such that $P(G|\{a_{n}\})$ and $P(G|\{b_{n}\})$ are transversal.
\end{theorem}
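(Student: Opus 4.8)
The plan is to reduce the assertion to producing, alongside the sequence $\{b_n\}_{n\in\omega}$, a single pair of basic neighborhoods of $e$ that meet only at $e$. Recall the standard fact, used e.g. in the proof of Theorem~\ref{t0}, that two non-discrete paratopological group topologies are transversal precisely when the join generated by their union is discrete, and that this join is discrete iff $\{e\}$ is open in it, i.e. iff there are a neighborhood $U$ of $e$ in $P(G|\{a_n\})$ and a neighborhood $V$ of $e$ in $P(G|\{b_n\})$ with $U\cap V=\{e\}$. So the whole problem becomes: build a nontrivial $PT$-sequence $\{b_n\}_{n\in\omega}$ and exhibit such $U$ and $V$. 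Throughout I would stay inside the apparatus of Section~6 that was built for exactly this purpose: neighborhoods of $e$ in a sequence-determined topology are described by the group-word sets of Proposition~\ref{p005}, and membership in the class of $PT$-sequences is checked by the criterion of Theorem~\ref{t008}.

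First I would fix the data on the $a$-side. Since $P(G|\{a_n\})$ is Hausdorff, Theorem~\ref{t0000} gives that $\{a_n\}$ is a $T$-sequence, so every structural fact proved for $T$-sequences is available — in particular the compactness of the bounded sum sets $\sum_{i\le m}A_{n_i}^{\ast}$ exploited in the proof of Theorem~\ref{t006}. Enumerate $G=\{g_n:n\in\omega\}$ and fix one basic neighborhood $U$ of $e$ in $P(G|\{a_n\})$ (a group-word set as in Proposition~\ref{p005}) with $G\setminus U$ infinite; this is possible since the topology is non-discrete and $T_1$ on an infinite group. The target is to build $\{b_n\}$ so that the canonical neighborhood $V$ of $e$ in $P(G|\{b_n\})$, the one generated by the tails $C_k=B_k^{\ast}$ with $B_k=\{b_m:m\ge k\}$, satisfies $V\setminus\{e\}\subseteq G\setminus U$, that is $U\cap V=\{e\}$.

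Then I would construct $\{b_n\}$ by recursion, imposing two requirements at stage $n$ once $b_0,\dots,b_{n-1}$ are chosen. The first requirement keeps $\{b_n\}$ a $PT$-sequence: I impose a fast-growth/lacunary condition so that the group-word criterion of Theorem~\ref{t008} is met, i.e. for every $g\ne e$ one obtains a witnessing sequence $\{C_n\}$ of tails of $\{b_n\}$ whose group-word sets omit $g$. The second requirement is the disjointness: I choose $b_n$ outside the set of all values $f(\text{tails})$ of the conjugated group words from $\mathcal{X}_m$ whose indices are all $\le n$ and which would otherwise land inside $U\setminus\{e\}$. At each stage only finitely many such configurations are active, so this forbidden set is a finite union of conjugacy translates determined by $U$, by $g_0,\dots,g_n$, and by $b_0,\dots,b_{n-1}$; since $G\setminus U$ is infinite a legal value for $b_n$ always remains, and one keeps the $b_n$ distinct, making the sequence nontrivial.

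The step I expect to be the main obstacle is showing that this per-stage, bounded-length control actually yields the global identity $U\cap V=\{e\}$: the neighborhood $V$ contains products of unboundedly many conjugated $b$'s drawn from tails, whereas the recursion only neutralizes configurations whose indices are bounded by the current stage, and in the non-Abelian setting a cancellation could in principle push a long product back into $U$. The way I would close this gap is through the standard-element description: by Lemma~\ref{l00} and Proposition~\ref{p005} every element of $V$ lies in some $\Gamma_{k\le m}U_k$, hence is a group word of bounded length in a single tail of $\{b_n\}$, so its largest index pins down the stage at which it was already expelled from $U$, while the compactness of the bounded sum sets on the $a$-side (as in the proof of Theorem~\ref{t006}) prevents limit points from slipping into $U$. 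An attractive alternative, if one accepts the classical machinery, is to bypass the explicit construction altogether: produce a transversal topology determined by a $T$-sequence $\{b_n\}$ via \cite[Corollary~2.4.10]{PZ} applied to the $T$-sequence $\{a_n\}$, observe $T(G|\{a_n\})\subseteq P(G|\{a_n\})$ and $T(G|\{b_n\})\subseteq P(G|\{b_n\})$, and use that transversality is inherited by finer topologies (once a coarser join is discrete, every finer join is discrete) to conclude that $P(G|\{a_n\})$ and $P(G|\{b_n\})$ are transversal; there the only point needing care is verifying that the transversal topology supplied by \cite{PZ} can be taken to be sequence-determined.
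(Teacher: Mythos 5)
Your closing ``alternative'' is, in essence, the paper's actual proof, and it is the part of your proposal that works: since $P(G|\{a_{n}\})$ is Hausdorff, Theorem~\ref{t0000} makes $\{a_{n}\}_{n\in\omega}$ a $T$-sequence; the paper then invokes \cite[Exercise 3.2.7]{PZ} --- not Corollary~2.4.10 --- to obtain a nontrivial $T$-\emph{sequence} $\{b_{n}\}_{n\in\omega}$ such that $T(G|\{a_{n}\})$ and $T(G|\{b_{n}\})$ are transversal, and concludes from $T(G|\{a_{n}\})\leq P(G|\{a_{n}\})$ and $T(G|\{b_{n}\})\leq P(G|\{b_{n}\})$ that the finer paratopological group topologies are transversal as well (your observation that transversality passes to finer topologies is exactly the point). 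This choice of citation disposes of the one caveat you flag at the end: Exercise~3.2.7 of \cite{PZ} delivers the transversal topology already in sequence-determined form, whereas Corollary~2.4.10 (which the paper uses elsewhere) only yields a transversal \emph{group topology} with no control on its form, so with that corollary your caveat would remain genuinely open.

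Your primary route, the recursive construction of $\{b_{n}\}$, is unnecessary for this statement and, as sketched, contains the gap you yourself identify --- and your proposed repair does not close it. The compactness of the sum sets $\sum_{i\leq m}A_{n_{i}}^{\ast}$ used in the proof of Theorem~\ref{t006} lives entirely on the $a$-side and says nothing about a long product of conjugated $b$'s collapsing back into $U$; likewise, an unspecified ``fast-growth/lacunary condition'' is not a verification of the criterion of Theorem~\ref{t008}. If one does want a direct construction (say, to attack the statement without the Hausdorff hypothesis), the workable scheme is different from yours in both respects: one takes $b_{n}\in U_{n}$ where $\{U_{n}\}_{n\in\omega}$ is a countable decreasing base of a coarser first-countable paratopological group topology $\tau_{0}\leq P(G|\{a_{n}\})$ (available by the proof of Theorem~\ref{t005}), so that $\{b_{n}\}$ converges to $e$ in $\tau_{0}$ and is automatically a $PT$-sequence; and one arranges, stage by stage, that the \emph{finite} word sets of Proposition~\ref{p005} built from the initial segments of $\{a_{n}\}$ and of $\{b_{n}\}$ meet only in $e$. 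Because the basic neighborhoods $U$ and $V$ are increasing unions over $m$ of these finite-stage word sets, triviality of the intersection at every finite stage already gives $U\cap V=\{e\}$ globally --- every element of a basic neighborhood is a word of \emph{some} finite length, and the word sets are nested in $m$ --- so no compactness or limit argument is needed; the finiteness of each stage is what makes the avoidance choice of $b_{m+1}$ possible. In short: your fallback is the paper's proof and is correct once the right reference is used; your main construction, as written, does not go through.
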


\begin{proof}
Since $P(G|\{a_{n}\})$ is Hausdorff, it follows from Theorem~\ref{t0000} that $\{a_{n}\}_{n\in\omega}$is a $T$-sequence in $G$. Then, from \cite[Exercise 3.2.7]{PZ}, there exists a nontrivial $T$-sequence $\{b_{n}\}_{n\in\omega}$ in $G$ such that $T(G|\{a_{n}\})$ and $T(G|\{b_{n}\})$ are transversal. Obviously, $T(G|\{a_{n}\})\leq P(G|\{a_{n}\})$ and $T(G|\{b_{n}\})\leq P(G|\{b_{n}\})$, so that $P(G|\{a_{n}\})$ and $P(G|\{b_{n}\})$ are transversal.
\end{proof}



\begin{thebibliography}{99}
\bibitem{AS2007} O.T. Alas, M. Sanchis, {\it Countably compact paratopological groups}, Semigroup Forum, 74(2007): 423--438.

\bibitem{AT2008} A. Arhangel'ski\v{\i}, M. Tkachenko,
  {\it Topological groups and related structures}, Atlantis Press, Paris; World
  Scientific Publishing Co. Pte. Ltd., Hackensack, NJ, 2008.

\bibitem{Ba1955}  R. Bagley, On the characterization of the lattice of topologies, J. London Math. Soc. 30 (1955) 247¨C249.

\bibitem{Bi1936}  G. Birkhoff, {\it On the combination of topologies}, Fundam. Math., 26(1936): 156--166.

\bibitem{AT2017} A. B{\l}aszczyk, M. Tkachenko, {\it Transversal, $T_{1}$-independent, and $T_{1}$-complementary topologies}, Topology Appl., 230(2017): 308--337.

\bibitem{DTY2005} D. Dikranjan, M. Tkachenko, I. Yaschenko, {\it On transversal group topologies}, Topology Appl., 153(2005): 786--817.

\bibitem{DTY2006} D. Dikranjan, M. Tkachenko, I. Yaschenko, {\it Transversal group topologies on non-Abelian group}, Topology Appl., 153(2006): 3338--3354.

\bibitem{E1989} R. Engelking, {\it General Topology} (revised and completed edition), Heldermann
Verlag, Berlin, 1989.

\bibitem{LL2012} F. Lin, C. Liu, {\it On paratopological groups}, Topology Appl., 159(2012): 2764--2773.

\bibitem{L2006} C. Liu, {\it A note on paratopological groups}, Comment Math. Univ. Carolin, 47(4)(2006): 633--640.

\bibitem{PZ} I. Protasov, E. Zelenyuk, {\it Topologies on groups determinded by sequences}, Math. Stud. Monograph Ser., 4, VNTL Publishers, L¡äviv (1999).

\bibitem{RA1968} A.van Rooij, {\it The lattice of all topologies is complemented}, Canad. J. Math. 20 (1968) 805--807.

\bibitem{Sh1984} D.B. Shakhmatov, {\it Condensations of universal topological algebras preserving continuity of operations and decreasing weights}, Moscow Univ. Bull., 39(2)(1984): 57--60. Russian original in: Vestnik Moskov. Univ. Ser. I Mat. Mekh., 2(1984): 42--45.

\bibitem{St1966} A.K. Steiner, {\it Complementation in the lattice of $T_{1}$-topologies}, Proc. Am. Math. Soc., 17(1966): 884--886.

\bibitem{MY2002} M. Tkachenko, I. Yaschenko, {\it Independent group topologies on Abelian groups}, Topology Appl., 122(2002): 425--451.

\bibitem{MT2014} M. Tkachenko, {\it Paratopological and semitopological groups versus topological groups}, Recent
Progress in General Topology III, Springer-Verlag, Atlantis Press, 2014.

\bibitem{ZP2001} E. Zelenyuk, I. Protasov, {\it Complemented topologies on Abelian groups} (in Russian), Sibirsk. Mat. Zh., 42(3)(2001): 550¨C-560; English transl. in: Siberian Math. J., 42(3)(2001): 465--472.
  \end{thebibliography}
  \end{document}